\documentclass[a4paper,11pt]{elsarticle}
\usepackage{bm,amsmath,amssymb,amsthm,mathrsfs,graphicx} 
\usepackage{subfigure}
\usepackage{mathrsfs}
\usepackage{xcolor}
\usepackage[colorlinks,linkcolor=blue,citecolor=blue, hyperindex,pagebackref,bookmarks]{hyperref}
\usepackage[capitalise,nosort]{cleveref}

\usepackage{hyperref}
\hypersetup{hypertex=true, colorlinks=true, linkcolor=blue, anchorcolor=blue, citecolor=blue}

\journal{?}

\begin{document}
\def\x{\bm x }
\def\df{\widehat {\bm d}^f }
\def\etaf{\widehat {\bm \eta}^f }
\def\ms{\widehat {\bm m}^s }
\def\mf{\widehat {\bm m}^f }
\def\u{\bm u }
\def\v{\bm v }
\def\V{\bm V }
\def\w{\bm w^f }
\def\c{\bm c }
\def\U{\bm U }
\def\D{\bm D}
\def\O{\Omega}
\def\A{\widehat {\mathcal A} }
\def\J{ J }
\def\P{\widehat{ \mathcal P} }
\def\F{\mathcal F }
\def\S{\mathcal S }
\def\X{\mathcal X }
\def\T{\mathcal T_h }
\def\F{\mathcal F }
\def\E{{\mathcal E}_h }
\def\Bs{\mathbb S }
\def\Bl{\mathbb L }
\def\Bm{\mathbb M }
\def\P{\mathcal P }

\def\div{\mbox{div}}
\def\uf{\bm u^f }
\def\us{\bm u^s }
\def\ds{\widehat {\bm d}^s }
\def\et{\bm{\eta} }
\def\ch{\bm{\chi} }
\def\Os{\Omega_s}
\def\Of{\Omega_f}
\def\Ors{\widehat \Omega_{s}}
\def\Orf{\widehat \Omega_{f}}
\def\leftnorm{ \left|\!\left|\!\left|}
\def\rightnorm{ \right|\!\right|\!\right|}

\numberwithin{equation}{section}
\newtheorem{mytheorem}{Theorem}[section]
\newtheorem{mylemma}{Lemma}[section]
\newtheorem{myremark}{Remark}[section]
\newtheorem{myassumption}{Assumption}
\newtheorem{mycorollary}[mytheorem]{Corollary}
\newtheorem{myprop}[mytheorem]{Proposition}
\newtheorem{mydefinition}{Definition}[section]

\begin{frontmatter}

\title{Stability Analysis of Monolithic Globally Divergence-Free ALE-HDG Methods for Fluid-Structure Interaction}
\tnotetext[t1]
{This work was supported by National Natural Science Foundation of China (12571434) and Open Research Project of National Key Laboratory of Fundamental Algorithms and Models for Engineering Simulation.}
\author{Shuaijun Liu}
\ead{sj\_liu123@163.com}
\author{Xiaoping Xie\corref{cor1}}
\ead{xpxie@scu.edu.cn}
\cortext[cor1]{Corresponding author.}

\address{School of Mathematics,  Sichuan University, Chengdu 610064, China  \\
 National Key Laboratory of Fundamental Algorithms and Models for Engineering Simulation, Sichuan University, Chengdu 610207, China
}

\begin{abstract}
In this paper, we propose two monolithic fully discrete finite element methods for fluid-structure interaction (FSI) based on a novel Piola-type Arbitrary Lagrangian-Eulerian (ALE) mapping.
For the temporal discretization, we apply the backward Euler method to both the non-conservative and conservative formulations. For the spatial discretization, we adopt arbitrary order hybridizable discontinuous Galerkin (HDG) methods for the incompressible Navier-Stokes and linear elasticity equations, and a continuous Galerkin (CG) method for the fluid mesh movement. We  derive stability results for both the temporal semi-discretization and the fully discretization, and   show that the  velocity approximations of the fully discrete schemes are globally divergence-free. Several numerical experiments are performed to verify the performance  of the proposed methods.

\end{abstract}

\begin{keyword}
Fluid-Structure Interaction $\cdot$ Piola-type ALE Mapping $\cdot$ HDG $\cdot$ Globally Divergence-free $\cdot$ Stability 
\end{keyword}

\end{frontmatter}

\section{Introduction}

Fluid-structure interaction (FSI) is  a   multi-physics phenomenon that describes the mutual interaction and interdependence between a movable or deformable structure and an internal or surrounding fluid flow, and  has a broad range of applications in many engineering fields such as aerospace,  civil and wind engineering, and biomedical engineering. 
As a fundamental multi-physics discipline characterized by strong, highly nonlinear coupling between the fluid flow and the structure,   its accurate numerical simulation has always been one of the most challenging frontiers in computational engineering; see  \cite{Bazilevs1,Bazilevs2013a,Bungartz,Chakrabarti2005,Hou2012, Howe,Jaiman2022,Morand1995,Richter2017} for some books and review articles.

FSI problems inherently involve moving domains, requiring accurate tracking or capturing of the dynamic interface between the fluid and structural components to ensure the accuracy and stability of numerical schemes.
To address this challenge, there are two types of numerical approaches in the literature: the interface-capturing approach and the interface-tracking approach. 
The former, represented by the immersed boundary method \cite{Peskin} and the fictitious domain method \cite{Glowinski}, uses body-unfitted meshes (not attaching to the interface)  and  captures the interface through additional transport equations. 
The latter typically employs body-fitted meshes that adapt to the moving interface, ensuring precise representation of the evolving boundary. The most representative  interface-tracking method is the Arbitrary Lagrangian-Eulerian (ALE) method \cite{Donea1982,Hirt, Hughes, Nitikitpaiboon,Souli2013,Takashi1992,Zhai2025}.  The ALE method combines the advantages of two classical kinematical descriptions, the Lagrangian description and the Eulerian description,  where the computational mesh of the fluid domain can be moved during the solution process  in some arbitrarily specified way so as to avoid  large distortions while  preserving the clear description of interfaces.

Another classification  of FSI numerical methods is  based on the coupling strategy between the fluid and structural solvers:  monolithic \cite{Averweg,Bazilevs,Gee,Ryzhakov,Schwarzacher1} and partitioned approaches \cite{Badia,Bukac,Degroote,Nobile2008,Seboldt}. 
The monolithic approach implicitly enforces the coupling conditions and solves the entire coupled system as a single algebraic system, whereas the partitioned approach explicitly applies the coupling conditions to decouple the system into  the fluid and structure components.   
The former typically incurs higher computational cost, but has superior stability than the latter.  
The latter allows flexible use of existing fluid and structure solvers, but   may be stable  only  within specific physical parameter ranges of the model; for instance, the partitioned algorithm may suffer from numerical instabilities when the fluid and structure densities are comparable due to the so-called added-mass effect  \cite{Causin}. 

The finite element method, as one of the most important numerical methods for solving partial differential equations, has been extensively applied to the spatial discretization of FSI problems \cite{Bazilevs1,Bungartz,Donea1983,Takashi1994};  see, for instance,      \cite{Braun2009, Donea1982,Hubner2004,Schott2019} and \cite{Antonietti2022,Balazsova2018,Froehle,Wang2009,Wang} for some numerical schemes using continuous Galerkin (CG) finite elements  and  discontinuous Galerkin (DG) finite elements, respectively.  
The hybridizable discontinuous Galerkin (HDG) framework, developed in \cite{Cockburn2009} for diffusion problems, provides a unifying strategy for the hybridization of finite element methods. 
On the one hand, the HDG framework preserves the advantages of the DG framework, e.g. the local conservation of physical quantities and the flexibility in meshing. On the other hand, it usually leads to a discrete system of significantly reduced sizes due to the local elimination property, i.e. the unknowns defined in the interior of elements can be locally eliminated by using the numerical traces defined on the interfaces of elements.   We refer to \cite{Chen2023,Cockburn2014,Fu2019,Fu2020, Rhebergen, ZhangM2026} for some mass-conserving HDG discretizations of incompressible flow problems that yield divergence-free velocity approximations.

In the past decade, there have been several works applying HDG methods to the FSI problems. 
In \cite{Sheldon,Sheldon2018} Sheldon et al. proposed monolithic ALE-HDG schemes for modelling FSI,  where the HDG method is applied to discretize the fluid and structure equations.
La Spina et al.  \cite{La2020} gave a monolithic ALE method for FSI problems with weakly compressible flows using an HDG scheme for  the fluid  and a standard CG scheme for  the structure. 
Later, Neunteufel and Sch\"{o}berl~\cite{Neunteufel} presented a monolithic ALE scheme in which the fluid equations are, for the first time, formulated using a Piola-type transformation and discretized with an $H(\div)$-conforming HDG approach to ensure an exactly divergence-free velocity approximation, while the structure equations are discretized using a standard CG method.   
Subsequently, Fu \cite{Fu2023} developed monolithic and strongly coupled partitioned schemes based on an ALE divergence-free HDG method for the Navier-Stokes equations, and a hybridized tangential-velocity-normal-normal-stress scheme for nonlinear hyperelasticity equations.  
We note that the above contributions have primarily focused on the development of discretization techniques without rigorous stability analysis. 

This paper  develops  two  new monolithic Piola-type ALE HDG  schemes for  a FSI model where the fluid and the structure are described respectively by  incompressible Navier-Stokes equations and     linearly elasticity equations,  and establishes  energy stability results. 
We apply the backward Euler method to both the non-conservative and conservative formulations so as to get two temporally semi-discrete schemes.
For the spatial discretization, we employ HDG methods for the fluid  and structure equations, and a continuous Galerkin method for the fluid mesh movement equation.  
More precisely,  on the fluid domain the discontinuous $P_{k-1}$ ($k\geq 1$) element   is used for the fluid stress tensor approximation, the discontinuous $P_k/P_k$ elements are adopted for the fluid velocity /  its trace approximations, the discontinuous  $P_{k-1}/P_k$ elements are applied  for the pressure / its trace approximations, and the continuous $P_k$ element is utilized   for the fluid mesh displacement approximation;  On the  structure domain,  the discontinuous $P_{k-1}$ element is used for the stress tensor approximation, and the discontinuous $P_k/P_k$ elements are adopted for both the structural velocity/ its trace approximations and the displacement / its trace approximations. 
 The choice  of the fluid velocity and pressure spaces, following \cite{Chen2016,Chen2023,Rhebergen}, ensures  globally divergence-free velocity approximations of the two resulting fully discrete  ALE-HDG schemes, as the pressure trace acts as a Lagrange multiplier enforcing continuity of the normal component of the fluid velocity across interelement boundaries. 
We show the  energy stability for both the temporally semi-discrete schemes and the fully discrete schemes.


The remainder of the paper is organized as follows. Section \ref{Set_Pre} introduces the governing equations, the Piola-type ALE mapping, the weak formulations of the FSI problem,  and gives  an energy stability result.  Section \ref{sec_time} applies  the backward Euler method to construct two temporally semi-discrete schemes and analyzes their stability. Section \ref{Sec_Full} presents two fully discrete HDG schemes  and derives    stability results. Section \ref{Sec_Num} provides several numerical experiments to demonstrate the accuracy, stability, and robustness of the proposed schemes. Finally, Section \ref{Sec_con} gives some  concluding remarks.

\section{Model problem and weak formulation}\label{Set_Pre}
\subsection{Notation}
For any  bounded domain $D$, integer $k\geq 0$ and $1\leq p\leq \infty$,  we denote  by $W^{k,p}(D)$ the standard Sobolev space with norm  $\|\cdot\|_{W^{k,p}(D)}$.  
In particular, we have $W^{0,p}(D)=L^p(D)$,  and   $W^{k,2}(D)=H^k(D )$  with   norm abbreviated as  $\|\cdot\|_{k,D}$. We also denote by $(\cdot,\cdot )_D$ and $\langle\cdot,\cdot \rangle_{\partial D}$ the $L^2$-inner product over a domain $D$ and over a boundary $\partial D$, respectively. 
To simplify notation, we use  $A\lesssim B$ to indicate that there exists a constant $C$, independent of mesh size $h$, time step $\tau$ and the numerical solution, such that $A\leq CB$. 

Let $t \in [0,T]$ denote the time variable. We consider a time-dependent domain $\Omega(t)= \overline{\Of(t)} \cup \overline{\Os(t)}\subset \mathbb{R}^d$ ($d\in{2,3}$) with Lipschitz boundary $\partial\Omega(t)$, where   $\Of(t)$ is the fluid subdomain  and $\Os(t)$ is the structural subdomain,  with $\Of(t) \cap \Os(t) = \emptyset$. The two subdomains share a smooth interface $\Gamma(t) := \partial\Of(t) \cap \partial\Os(t)$, across which fluid-structure interaction occurs (cf.  Figure~\ref{fig:dom}).  Denote by  
$$\Orf := \Of(0),\quad  \Ors := \Os(0), \quad  \widehat{\Gamma} := \Gamma(0), $$ 
the initial/reference fluid domain, structural domain and interface, respectively.

\begin{figure}[htbp]
\centering
\subfigure{\includegraphics[width=0.88\textwidth,
height=40mm]{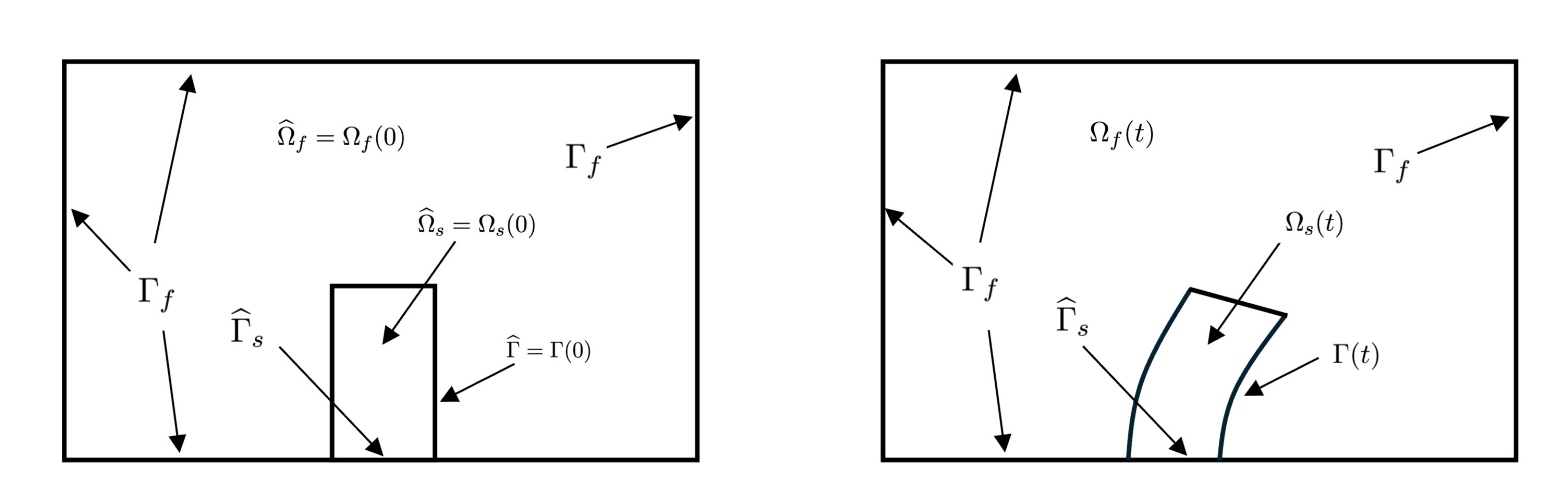}}
\caption{ \footnotesize Schematic of the FSI domain: reference configuration at $t=0$ (left) and configuration at $t>0$ (right). 
}
\label{fig:dom}
\end{figure}

For $i=f,s$, let $\widehat \x_i$ denote the initial/reference/Lagrangian coordinates in $\widehat \Omega_i$ and  $\x_i=\widehat {\bm {X}}_i(\widehat \x_i, t)$   the current/Eulerian coordinates in $ \Omega_i(t)$. Here   $\widehat {\bm {X}}_i(\widehat \x, t):\widehat \Omega_i \rightarrow \Omega_i (t)$ is a bijective flow mapping given by 
 \begin{align}\label{bijective}
\widehat {\bm {X}}_i(\widehat \x_i, t)= \widehat \x_i+ \widehat {\bm{d}}^i(\widehat \x_i, t) , \ \forall \widehat \x_i\in\widehat\Omega_i,
\end{align}
where $\widehat {\bm{d}}^i$ denotes the displacement of material point $\widehat \x_i$   at time $t$. Throughout the paper any quantity `` $ {\cdot}$ '' in the material (Lagrangian) description is denoted with a hat `` $\widehat{\cdot}$ ''. 

\subsection{Model description}
In the following, we show the governing equations of  the fluid and structure, as well as     the fluid-structure coupling conditions.

\subsubsection{Fluid equations}
 In the FSI model, the fluid dynamic behavior is described  by incompressible Navier-Stokes equations in terms of the fluid velocity $\u^f$ and the fluid pressure $p^f$:
\begin{align}\label{fluid_eq}
\left\{
\begin{aligned}
\rho^f{\partial _t\u^f}+\rho^f(\uf\cdot\nabla)\uf &=\nabla\cdot\bm{\sigma}^f(\u^f,p^f) +\bm g^f, &&\text{in}\ \Of(t) \times(0,T], \\
\nabla\cdot \u^f&=0,&&\text{in}\ \Of(t)\times(0,T],\\
\u^f&=\bm 0, &&\text{on}\ \Gamma_f(t)\times (0,T],\\
\u^f(\cdot, 0)&=\u_0^f,&& \text{on}\ \Of(0),
\end{aligned}
\right.
\end{align}
where $\rho^f>0$  denotes the  fluid density  (assumed constant),  $\mathbf{g}^f$   the fluid volume external force,  $\u^f_0$   the initial fluid velocity field, and $\Gamma_f(t) := \partial \Omega_f(t) \backslash \Gamma(t)$. The fluid Cauchy stress $\bm{\sigma}^f$ reads
\begin{align*}
	\bm{\sigma}^f(\u^f,p^f):=2\rho^f\mu^f\bm{D}(\u^f)-p^f\bm{I},
\end{align*} 
 with the constant viscosity coefficient $\mu^f>0$, where $\bm{I}$ is the identity tensor and $\bm{D}(\u^f):=\tfrac{1}{2}(\nabla\u^f+(\nabla\u^f)^\top )$ is the fluid strain rate tensor. 
 
\subsubsection{Structure equations}
As for the structural dynamic behavior, we consider the following linear elasticity system  in terms of the structure velocity $\widehat{\u}^s$ and the structure displacement $\ds$ in the  Lagrangian description:
  \begin{align}\label{structure_eq}
\left\{
\begin{aligned}
 \rho^s{\partial_t\widehat \u^s}-\widehat\nabla\cdot\widehat{\bm{\sigma}}^s( \ds) &= {\bm 0 }, &&\text{in}\ \Ors\times(0,T], \\
\partial_t \ds-\widehat \u^s&={\bm 0 } ,&&\text{in}\ \Ors\times(0,T],\\
\widehat \u^s&=\bm 0, && \text{on}\ \widehat \Gamma_s\times (0,T],\\
\ds(\cdot, 0)=\ds_0,\ \widehat\u^s(\cdot, 0)&=\widehat \u^s_0, &&\text{on}\ \Ors,
\end{aligned}
\right.
\end{align}
where $\widehat{\rho}^s>0$ denotes the structure density (assumed constant), $\widehat{\u}^s_0$ and $\mathbf{d}^s_0$   the initial structure velocity and displacement, respectively, and  $\widehat{\Gamma}_s := \partial \widehat{\Omega}_s \backslash \widehat{\Gamma}$.
The structure Cauchy stress tensor $\widehat{\bm{\sigma}}^s$ is given by
\begin{align*}
	\bm{\widehat \sigma}^s(\ds):= \lambda^s(\widehat\nabla\cdot\ds)\bm I+2\mu^s\bm D(\ds) ,
\end{align*}
where $\bm{D}(\ds):=\tfrac{1}{2}(\widehat\nabla\ds+(\widehat\nabla\ds)^\top )$ is the strain tensor and $\lambda^s,\mu^s$ are the Lam\'{e} constants such that $0<\lambda^s<\infty$ and $0<\mu_1<\mu^s<\mu_2$ for   positive constants $\mu_1$ and $\mu_2$. Here we mention that the gradient operator $\widehat\nabla$   refers to differentiation with
respect to the Lagrangian coordinates  $\widehat \x_s$. 

\subsubsection{Coupling conditions}
For the FSI model, we introduce the following no-slip type interface conditions: 
\begin{itemize}
	\item Kinematic coupling condition: 
	\begin{align}\label{Kine_cond}
		\u^f =\widehat \u^s\circ(\widehat {\bm {X}}_s)^{-1},\qquad \text{on}\  \Gamma(t),\ t\in (0,T].
	\end{align}
	\item Dynamic coupling condition: 
	\begin{align}\label{Dyn_cond}
		\widehat \J_b \bm{\sigma}^f{\bm n}^f+(\widehat {\bm{\sigma}}^s\widehat{\bm n}^s)\circ(\widehat {\bm {X}}_s)^{-1}=\bm 0,\qquad \text{on}\ \Gamma(t),\ t\in (0,T].
	\end{align}
\end{itemize}
Here, $\widehat \J_b$ is the surface Jacobian of the mapping from $\widehat\Gamma$ to $\Gamma(t)$, $\bm n^f$ and $\widehat{\bm n}^s$ denote the fluid and structure outer normal vector on the fluid-structure interface $\Gamma(t)$ and  $\widehat \Gamma$, respectively.  Notice that $\widehat {\bm {X}}_s$ is given by \eqref{bijective} and it holds  $\Gamma(t)=\widehat\Gamma\circ(\widehat {\bm {X}}_s)^{-1}$.

\subsection{The ALE mapping}\label{ALE_sec}
In FSI problems, directly adopting the fluid material (Lagrangian) mapping $\widehat {\bm {X}}_f(\widehat \x_f, t)$ may lead to severe mesh distortion due to large displacements of the fluid material points.
 To avoid this, we introduce for $ t\in(0,T]$ the classical Arbitrary Lagrangian-Eulerian (ALE) mapping $\A(\cdot,t)$, defined as a time-dependent bijection from the reference domain $\Orf$ to the current domain $\Of(t)$, with 
 \begin{align}\label{ALE map}
   \Of(t)=\A(\Orf,t) \text{ and }	\Of(t) \ni \x=\A(\widehat \x, t)=\widehat\x+\etaf(\widehat\x,t) ,\quad \forall \widehat \x\in\widehat\Omega_f.
 \end{align}
Here, $\etaf$ denotes the displacement field of the fluid domain, describing the motion of the computational grid in the fluid domain. It coincides with the structural displacement $\ds(t)(\widehat\x):=\ds(\widehat\x,t)$ on the fluid-structure interface $\widehat \Gamma$ and is smoothly extended to the entire fluid domain $\widehat{\Omega}_f$. A common choice of the extension operator is the weighted harmonic extension: 
\begin{align}\label{mesh_equa}
\left\{
\begin{aligned}
-\widehat \nabla\cdot (\kappa^f \widehat \nabla \etaf) &=0, &&\text{in}\ \Orf, \\
 \etaf &= \ds(t),&&\text{on}\ \widehat \Gamma ,\\
 \etaf &=\bm 0, &&\text{on}\ \widehat \Gamma_f:=\partial\Orf\backslash \widehat \Gamma. 
\end{aligned}
\right.
\end{align}
Here $\kappa^f=\kappa^f(\widehat \x)$ is a positive function. Note that $\kappa^f=1$ is a simple and convenient   choice of the weight function, but in some situations one may choose a more complex function to enable mesh adaptation.

Introduce the fluid domain velocity, the fluid deformation gradient and its Jacobian determinant respectively as follows:
\begin{align}\label{mesh_de}
	\w(\x,t)\circ\A =\widehat{\bm w}^f(\widehat \x, t):= {\partial_t \A(\widehat \x, t) } 
	=\partial_t\etaf(\widehat \x, t), \quad \widehat{\bm F}_f:=\widehat \nabla\A, \quad \widehat \J_f=\det(\widehat{\bm F}_f).
\end{align}  
Applying the chain rule gives
\begin{align}\label{J_F}
	{\partial_t \widehat \J_f}=\widehat \J_f\widehat{\bm F}_f^{-\top}\bm{:}\widehat\nabla\widehat{\bm w}^f
	=\nabla\cdot\w\widehat \J_f,\qquad {\partial_t \widehat{\bm F}_f }=\widehat\nabla\widehat{\bm w}^f
	=\nabla\w \widehat{\bm F}_f,
\end{align}
where `\ $\bm{:}$\ ' denote the Frobenius inner product of two $d\times d$ martrices, i.e. $\mathbb A\bm{:} \mathbb B=\sum\limits_{i,j=1}^d a_{ij}b_{ij}$ for $\mathbb A=[a_{ij}]_{d\times d}$ and $\mathbb B=[b_{ij}]_{d\times d}$.
 
Under the ALE mapping $\A$, we introduce the Piola transformation to ensure that a function $\widehat \v\in\bm H(\widehat \div,\Orf)$ remains in the space $ \bm H(\div,\Of(t))$ after the domain deformation: 
\begin{mydefinition}\label{Piola_defin}
	\rm{(The Piola-type ALE mapping of functions)}	Assume that $\widehat \J_f$ does not vanish at any point and time, then for each $\widehat\v(\widehat\x,t)\in L^\infty (0,T;\bm H(\widehat \div,\Orf))$ we define  
	\begin{align}\label{Piola-type ALE}
		&\v(\x,t):=\P(\widehat \v)\circ {\A}^{-1} (\x, t) \quad \text{with } \P(\widehat \v):=\frac{1}{\widehat \J_f}\widehat{\bm F}_f \widehat \v, \ \x=\A(\widehat \x, t).
	\end{align}
\end{mydefinition}
For the Piola transformation \eqref{Piola-type ALE}, it is standard that 
\begin{align}\label{div_in}
	\nabla\cdot \v(\x,t) =\frac{1}{\widehat \J_f}\widehat\nabla\cdot \widehat \v (\widehat \x, t).
\end{align}
Furthermore, we have $\v\in \bm H(\div,\Of(t))$ if $ {1}/{\widehat \J_f}\in L^{\infty}(\Orf)$. 
 
Let us introduce the following Sobolev space in view of the Piola-type ALE mapping: 
\begin{align*}
	\V_{\div}^{f}(t)&:=\left\{ \v:\Of(t)\rightarrow\mathbb R^d\mid \v=\P(\widehat\v)\circ \A^{-1},\ \widehat\v\in \bm H(\widehat \div,\Orf)  \right\},
\end{align*}
where
 \begin{align*}
	\bm H(\widehat \div,\Orf):=\{\widehat\v\in \bm L^2(\Orf)|\   \widehat\nabla\cdot \widehat\v\in  L^2(\Orf)\}.
\end{align*}
 The next lemma shows some fundamental properties of the Piola transform (cf. \cite[Lemma 2.1.6]{Boffi2013}).
 
\begin{mylemma}\label{lem2-2}
	 For any $\v\in \V_{\rm div}^{f}(t)$ and $$q\in  \widetilde Q^{f}(t):=\left\{ q:\Of(t)\rightarrow\mathbb R\mid q=\widehat q\circ \A^{-1},\ \widehat q\in L^2(\Orf)  \right\},$$  
	 it holds that
\begin{align}\label{212}
	&(\v,\nabla q)_{\Of} =(\widehat \v,\widehat \nabla\widehat q)_{\Orf},\quad 
	(q,\nabla\cdot\v )_{\Of}=(\widehat q,\widehat \nabla\cdot\widehat \v )_{\Orf},\quad
	(\v\cdot\bm n , q)_{\Of}=(\widehat \v\cdot\widehat{ \bm n} ,\widehat q)_{\Orf}.
\end{align}
\end{mylemma}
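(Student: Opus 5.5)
The plan is to pull each integral over $\Of(t)$ back to $\Orf$ through the change of variables $\x=\A(\widehat\x,t)$, for which $d\x=\widehat\J_f\,d\widehat\x$. The Piola factor $\widehat\J_f^{-1}\widehat{\bm F}_f$ in \eqref{Piola-type ALE} will then cancel against the Jacobian and the chain-rule factor. Throughout I will assume $\widehat\J_f>0$, so that orientation is preserved and $|\widehat\J_f|=\widehat\J_f$. I will first work with smooth $\widehat\v$ and $\widehat q\in H^1(\Orf)$, and extend to the general case by density at the end.

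\textbf{First identity.} Write $q=\widehat q\circ\A^{-1}$. The chain rule gives $\nabla q\circ\A=\widehat{\bm F}_f^{-\top}\widehat\nabla\widehat q$. Combining this with $\v\circ\A=\widehat\J_f^{-1}\widehat{\bm F}_f\widehat\v$, I get
\[
(\v,\nabla q)_{\Of}=\int_{\Orf}\widehat\J_f^{-1}\widehat{\bm F}_f\widehat\v\cdot\widehat{\bm F}_f^{-\top}\widehat\nabla\widehat q\,\widehat\J_f\,d\widehat\x=(\widehat\v,\widehat\nabla\widehat q)_{\Orf},
\]
because $\widehat{\bm F}_f\bm a\cdot\widehat{\bm F}_f^{-\top}\bm b=\bm a\cdot\bm b$.

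\textbf{Second identity.} By \eqref{div_in}, $(\nabla\cdot\v)\circ\A=\widehat\J_f^{-1}\widehat\nabla\cdot\widehat\v$. Changing variables, the factor $\widehat\J_f^{-1}$ cancels the Jacobian and leaves $(\widehat q,\widehat\nabla\cdot\widehat\v)_{\Orf}$. This step only needs $\widehat q\in L^2$, so it holds on all of $\widetilde Q^f(t)$ as stated.

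\textbf{Third identity.} I read it as a boundary pairing, $\langle\v\cdot\bm n,q\rangle_{\partial\Of}=\langle\widehat\v\cdot\widehat{\bm n},\widehat q\rangle_{\partial\Orf}$. It follows by applying the divergence theorem on each domain and adding the first two identities:
\[
\langle\v\cdot\bm n,q\rangle_{\partial\Of}=(\v,\nabla q)_{\Of}+(q,\nabla\cdot\v)_{\Of}=(\widehat\v,\widehat\nabla\widehat q)_{\Orf}+(\widehat q,\widehat\nabla\cdot\widehat\v)_{\Orf}=\langle\widehat\v\cdot\widehat{\bm n},\widehat q\rangle_{\partial\Orf}.
\]
An alternative is a direct pointwise check with Nanson's formula, $\bm n\,ds=\widehat\J_f\widehat{\bm F}_f^{-\top}\widehat{\bm n}\,d\widehat s$, which gives $\v\cdot\bm n\,ds=\widehat\v\cdot\widehat{\bm n}\,d\widehat s$.

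\textbf{Main obstacle.} The delicate part is regularity and density rather than algebra. The first and third identities require $\widehat q\in H^1$ (or $H^{1/2}$ traces), and their boundary terms must be understood as $H^{-1/2}$–$H^{1/2}$ dualities. To handle this I will:
\begin{itemize}
\item assume $\A(\cdot,t)$ is a bi-Lipschitz map with $\widehat{\bm F}_f,\widehat{\bm F}_f^{-1}\in L^\infty$ and $\widehat\J_f\ge c>0$;
\item deduce that the Piola map is an isomorphism between $\bm H(\widehat\div,\Orf)$ and $\bm H(\div,\Of(t))$, and composition with $\A$ is an isomorphism between $H^1(\Orf)$ and $H^1(\Of(t))$;
\item pass to the limit in each identity using the density of smooth functions in these spaces.
\end{itemize}
This is the content of \cite[Lemma 2.1.6]{Boffi2013}, and I would cite it for this limiting step.
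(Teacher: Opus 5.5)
Your proof is correct and is the standard change-of-variables argument behind \cite[Lemma 2.1.6]{Boffi2013}. That citation is all the paper gives for this lemma; it offers no proof of its own. You also read the statement correctly: the first identity needs $\widehat q\in H^1$ rather than only $L^2$, and the third must be taken as the boundary pairing $\langle \bm v\cdot\bm n,q\rangle_{\partial\Omega_f(t)}=\langle\widehat{\bm v}\cdot\widehat{\bm n},\widehat q\rangle_{\partial\widehat\Omega_f}$, which your Green's-formula step (or Nanson's formula) establishes.
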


\subsection{Weak formulations and stability}

For any $\v(\x,t)\in H^1(0,T;\V_{\rm{div}}^f(t))$, define its Piola-type ALE time material derivative as 
\begin{align}\label{P-material}
\partial ^M_t \v(\x,t):=\P\big( {\partial_t \widehat \v } \big)\circ \A^{-1}(\x,t)=\big(\frac{1}{\widehat \J_f}\widehat{\bm F}_f {\partial_t \widehat \v}\big)  \circ \A^{-1}(\x,t).
\end{align} 
We easily get 
\begin{align}\label{Pialo_ma}
	 \partial ^M_t \v ={\partial_t} \v  +(\w\cdot\nabla)\v-\nabla\w\v+\nabla\cdot\w\v, \quad \forall \v\in H^1(0,T;\V_{\rm{div}}^f(t)),
\end{align}
where $\w$ is the fluid domain velocity given by \eqref{mesh_de}. In fact,  by Deﬁnition\ \ref{Piola_defin} and \eqref{J_F} we get
	\begin{align*}
	 {\partial_t} ( \v \circ\A )= {\partial_t}  (\frac{1}{\widehat \J_f}\widehat{\bm F}_f \widehat \v)=& \frac{1}{\widehat \J_f}(-\widehat{\bm F}_f^{-\top}\bm{:}\widehat\nabla\widehat{\bm{w}}^f\widehat{\bm F}_f+ \widehat\nabla\widehat{\bm{w}}^f)\widehat \v+\frac{1}{\widehat \J_f}\widehat{\bm F}_f {\partial_t} \widehat \v  \nonumber\\
	=&\frac{1}{\widehat \J_f}(\nabla\w \widehat{\bm F}_f- \nabla\cdot\w\widehat{\bm F}_f)\widehat \v+\frac{1}{\widehat \J_f}\widehat{\bm F}_f {\partial_t}  \widehat \v  \nonumber\\
	 =&\nabla\w(\v\circ\A)-\nabla\cdot\w(\v\circ\A) +\P( {\partial_t}  \widehat \v )\nonumber\\
	 =&\nabla\w\v-\nabla\cdot\w\v +\partial ^M_t \v.
\end{align*}
On the other hand, according to the chain rule we obtain
\begin{align*}
	 {\partial_t}  (\v \circ\A )= {\partial_t}  \v  \circ\A+(\w\cdot\nabla)(\v\circ\A)={\partial_t}  \v  +(\w\cdot\nabla)\v. 
\end{align*}
As a result, \eqref{Pialo_ma} follows.

 In light of the relation \eqref{Pialo_ma} we immediately obtain the following Reynolds transport formulas  under the Piola-type ALE frame:

\begin{mylemma}
\label{lemma2.3}
 For $\v,\u \in H^1(0,T;\V_{\rm{div}}^f(t))$ it holds
\begin{align}\label{Reynolds}
\begin{split}
	\frac{d}{dt}(\u,\v)_{\Of(t)}	   &=\left(\partial ^M_t \u+\nabla\w\u-\nabla\cdot\w\u+(\nabla\w)^{\top}\u, \v \right)_{\Of(t)}+\left(\u,\partial ^M_t \v  \right )_{\Of(t)}\\
	&=\left( {\partial_ t} \u  +(\w\cdot\nabla)\u, \v \right)_{\Of(t)}+\left(\u, {\partial_ t} \v +(\w\cdot\nabla)\v+\nabla\cdot\w \v\right)_{\Of(t)}.
\end{split}
\end{align}
In particular,   if $ \partial ^M_t \v=0$, then 
	\begin{align}\label{Reynolds2}
	\begin{split}
		\frac{d}{dt}(\u,\v)_{\Of(t)}&=\left(\partial^M_t \u+\nabla\w\u-\nabla\cdot\w\u+(\nabla\w)^{\top}\u, \v \right)_{\Of(t)}\\
		&=\left( {\partial_t \u } +(\w\cdot\nabla)\u +(\nabla\w)^{\top}\u, \v \right)_{\Of(t)} 
		\end{split}
	\end{align}
and 
	\begin{align}\label{Reynolds00}
		&\frac{d}{dt}\|\v\|^2_{L^2(\Of(t) )}=\left(2 \nabla\w\v-\nabla\cdot\w\v, \v \right)_{\Of(t)}.
	\end{align}
\end{mylemma}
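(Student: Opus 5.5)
The plan is to pull every integral back to the fixed reference domain $\Orf$, where differentiation in time commutes with integration, and then push forward again. By the change of variables $\x=\A(\widehat\x,t)$ we have
\begin{align*}
(\u,\v)_{\Of(t)}=\int_{\Orf}(\u\circ\A)\cdot(\v\circ\A)\,\widehat\J_f\,d\widehat\x .
\end{align*}
Since $\Orf$ does not depend on $t$, we differentiate under the integral sign. By the product rule and the first identity in \eqref{J_F}, $\partial_t\widehat\J_f=(\nabla\cdot\w\circ\A)\,\widehat\J_f$, this gives
\begin{align*}
\frac{d}{dt}(\u,\v)_{\Of(t)}=\int_{\Orf}\Big[\partial_t(\u\circ\A)\cdot(\v\circ\A)+(\u\circ\A)\cdot\partial_t(\v\circ\A)+(\nabla\cdot\w\,\u\cdot\v)\circ\A\Big]\widehat\J_f\,d\widehat\x .
\end{align*}

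For the second line of \eqref{Reynolds}, I insert the chain-rule identity $\partial_t(\u\circ\A)=(\partial_t\u+(\w\cdot\nabla)\u)\circ\A$, and the same identity for $\v$. Mapping back to $\Of(t)$ gives
\begin{align*}
(\partial_t\u+(\w\cdot\nabla)\u,\v)+(\u,\partial_t\v+(\w\cdot\nabla)\v)+(\nabla\cdot\w\,\u,\v).
\end{align*}
Absorbing the last term into the $\v$-slot yields exactly the second expression.

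For the first line, I instead use the other computation shown just before \eqref{Pialo_ma}, namely $\partial_t(\u\circ\A)=(\partial^M_t\u+\nabla\w\u-\nabla\cdot\w\u)\circ\A$, and the same for $\v$. Substituting both gives
\begin{align*}
(\partial^M_t\u+\nabla\w\u-\nabla\cdot\w\u,\v)+(\u,\partial^M_t\v+\nabla\w\v-\nabla\cdot\w\v)+(\nabla\cdot\w\,\u,\v).
\end{align*}
The two divergence terms $-(\u,\nabla\cdot\w\,\v)$ and $+(\nabla\cdot\w\,\u,\v)$ cancel. I then transpose $(\u,\nabla\w\v)=((\nabla\w)^\top\u,\v)$ and regroup, which gives the first expression. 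Consistency of the two lines is also a direct check using \eqref{Pialo_ma}.

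The special cases then follow directly. If $\partial^M_t\v=0$, drop that term from the first line of \eqref{Reynolds} to obtain the first line of \eqref{Reynolds2}. Replacing $\partial^M_t\u$ by \eqref{Pialo_ma} makes the terms $\pm\nabla\w\u$ and $\pm\nabla\cdot\w\u$ cancel, leaving $\partial_t\u+(\w\cdot\nabla)\u+(\nabla\w)^\top\u$, which is the second line. For \eqref{Reynolds00}, take $\u=\v$ in the first line of \eqref{Reynolds2} and use $((\nabla\w)^\top\v,\v)=(\nabla\w\v,\v)$.

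The only delicate point is justifying differentiation under the integral. This needs enough regularity of $\etaf$ (so that $\widehat{\bm F}_f$, $\widehat\J_f$ and $\partial_t\etaf$ are bounded, with $\widehat\J_f$ bounded away from zero) and $\widehat\u,\widehat\v\in H^1(0,T;\bm L^2)$. I would state the identity for smooth functions and extend it by density, interpreting $d/dt$ in the distributional sense in time.
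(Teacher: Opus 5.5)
Your proof is correct and follows essentially the paper's route. You pull the integral back to $\Orf$, differentiate there, push forward, and substitute the two expressions for $\partial_t(\u\circ\A)$: the chain rule, and the computation behind \eqref{Pialo_ma}. The only difference is bookkeeping. The paper writes $\P(\widehat{\v})\widehat{\J}_f=\widehat{\bm F}_f\widehat{\v}$ and differentiates $\widehat{\bm F}_f$ via $\partial_t\widehat{\bm F}_f=\nabla\w\,\widehat{\bm F}_f$, which gives $(\partial_t(\u\circ\A),\v)_{\Of(t)}+(\u,\partial^M_t\v+\nabla\w\v)_{\Of(t)}$ in one step. You instead differentiate $\widehat{\J}_f$ and treat both factors symmetrically, which yields the same identities once the divergence terms cancel.
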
  
\begin{proof}  
In view of \eqref{Piola-type ALE},  \eqref{J_F} and \eqref{P-material} we have 
\begin{align*}
\begin{split}
	\frac{d}{dt}(\u,\v)_{\Of(t)}&=\frac{d}{dt}(\P(\widehat \u),\P(\widehat \v)\widehat \J_f )_{\Orf}=\frac{d}{dt}(\P(\widehat \u), \widehat{\bm F}_f  \widehat \v)_{\Orf}\\
	   &=(  {\partial_ t}(\u \circ\A ),  \widehat{\bm F}_f  \widehat \v )_{\Orf}+(\u \circ\A, {\partial_ t} \widehat{\bm F}_f \ \widehat \v+ \widehat{\bm F}_f {\partial_ t}\widehat \v )_{\Orf}\\
	   &=( {\partial_ t}(\u \circ\A ), \v )_{\Of(t)}+(\u,\partial ^M_t \v +\nabla\w\v )_{\Of(t)}.
\end{split}
\end{align*}
which, together with   \eqref{Pialo_ma}, implies the identities \eqref{Reynolds} and \eqref{Reynolds2}. 
The identity \eqref{Reynolds00} follows from \eqref{Reynolds2} and the relation
$$\left( \nabla\w\v,, \v \right)_{\Of(t)}=\left( (\nabla\w)^{\top}\v, \v \right)_{\Of(t)}.$$
\end{proof}

\begin{myremark}\label{gcl-rem}
By taking $\u=\v= \mathbf{e}_i, \; i=1,\dots,d$ in \eqref{Reynolds}, where $\{\mathbf{e}_i\}_{i=1}^d$ denotes the canonical basis of $\mathbb{R}^d$, we can obtain
	\begin{align}\label{gcl-conti}
		\frac{d}{dt}|\Of(t)|=\frac{d}{dt}\int_{\Of(t)}d\x=\int_{\Of(t)} \nabla\cdot\w   d\x=\int_{\partial \Of(t)} \w\cdot\bm n dS,  
	\end{align}
where $|\Of(t)|$ is the measure of the subdomain $\Of(t)$.  This relation is called  the Geometric Conservation Law (GCL) for  flow problems with a moving domain   \cite{Farhat,Guillard}. 
\end{myremark}

For the time derivative and convective terms in the fluid system \eqref{fluid_eq}, we have the following results. 
\begin{mylemma}\label{time-conv}
For $\u  \in \bm H^1(\Of(t))$  with $\nabla \cdot \u= 0$, $ \w \in \bm H^1(\Of(t))$ given by  \eqref{mesh_de}, and $\v \in \bm H_0^1(\Of(t))$, it holds 
\begin{align}\label{time_derivative}
\begin{split}
   \bigl(\partial_t \u + (\u \cdot \nabla)\u , \v \bigr)_{\Of(t)}  
   ={}&\Bigl( \partial_t^M \u +  \nabla \w\ \u - \tfrac{1}{2}\nabla \cdot \w\,\u , \v \Bigr)_{\Of(t)}\\
   &\quad + \tfrac{1}{2}\Bigl( (\v \cdot \nabla) \u - (\u \cdot \nabla) \v,\, \u- \w \Bigr)_{\Of(t)}.
\end{split}
\end{align}
Moreover, if $\partial_t^M \v^f = 0$ then 
\begin{align}\label{time_derivative_2}
\begin{split}
   \bigl(\partial_t \u + (\u \cdot \nabla)\u , \v\bigr)_{\Of(t)}  
   ={}& \frac{d}{dt}\bigl(\u,\v \bigr)_{\Of(t)}
   - \Bigl( (\nabla \w)^{\top}\u-\tfrac{1}{2} \nabla \cdot \w\,\u  , \v \Bigr)_{\Of(t)} \\
   &\quad+ \tfrac{1}{2}\Bigl( (\v \cdot \nabla) \u- (\u \cdot \nabla) \v,\, \u- \w \Bigr)_{\Of(t)}.
\end{split}
\end{align}

\begin{proof}
First,  applying integration by parts and    \eqref{Pialo_ma} we obtain
\begin{align*}
   &\bigl(\partial_t \u + (\u \cdot \nabla)\u , \v \bigr)_{\Of(t)} \\
   ={}&\Bigl( \partial_t^M \u + \nabla \w\u- \nabla \cdot \w\,\u + ((\u - \w)\cdot \nabla)\u , \v \Bigr)_{\Of(t)} \\
   ={}&\Bigl( \partial_t^M \u + \nabla \w\u - \tfrac{1}{2}\nabla \cdot \w\,\u , \v \Bigr)_{\Of(t)}
      + \tfrac{1}{2}\Bigl( (\v \cdot \nabla) \u - (\u \cdot \nabla) \v,\, \u - \w \Bigr)_{\Of(t)},
\end{align*}
which proves \eqref{time_derivative}.

Furthermore, if $\partial_t^M \v^f = 0$,   by the first equality of \eqref{Reynolds2} we get 
 \begin{align*}
 \Bigl( \partial_t^M \u + \nabla \w\u - \tfrac{1}{2}\nabla \cdot \w\,\u , \v \Bigr)_{\Of(t)}=
 \frac{d}{dt}(\u,\v)_{\Of(t)}+\left( \tfrac{1}{2}\nabla \cdot \w\,\u -(\nabla\w)^{\top}\u, \v \right)_{\Of(t)}.
 \end{align*}
 Substituting this relation into  \eqref{time_derivative} yields 
  \eqref{time_derivative_2}.  
\end{proof}
\end{mylemma}

\begin{myremark}
	Equations \eqref{time_derivative} and \eqref{time_derivative_2} are consistent in the continuous case, with the former often referred to as a non-conservative formulation and the latter as a conservative formulation for the time derivative and convective terms in the fluid equations. Based on them, we shall construct two types of  temporal discretizations,  a non-conservative scheme and a  conservative one, and     analyze their stability  in Section \ref{sec_time}. We mention that the non-conservative formulation \eqref{time_derivative} has been  adopted  in ~\cite{Fu2023, Neunteufel} to construct the temporal discretizations of FSI problems, whereas the conservative form  \eqref{time_derivative_2} is, to the best of our knowledge, introduced for the first time in our contribution.
\end{myremark}

To  give the ALE weak formulation of the FSI model \eqref{fluid_eq}-\eqref{Dyn_cond}, we need to introduce, for all $t\in[0,T]$, the following function spaces:
\begin{align*}
	\V^f(t)&:=\left\{ \v^f:\Of(t)\rightarrow\mathbb R^d\mid \v^f=\P(\widehat\v^f)\circ \A^{-1},\ \widehat\v^f\in \bm H^1(\Orf)  \right\},\\
	\V^f_0(t)&:=\left\{ \v^f\in \V^f(t)\mid \v=\bm 0\ \text{on}\ \Gamma_f(t)  \right\},\\
	 \bm W^f(t)&:=\left\{\v^f\in \V^f_0(t)\mid \nabla\cdot \v^f=0 \right\},\\
	Q^f(t)&:=\left\{ q^f:\Of(t)\rightarrow\mathbb R\mid q^f=\widehat q^f\circ \A^{-1},\ \widehat q^f\in L_0^2(\Orf)  \right\},\\
	\widehat \V_0^s&:=\left\{ \widehat \v^s:\Ors\rightarrow\mathbb R^d\mid \widehat \v^s\in \bm H^1(\Ors),\ \widehat \v^s=\bm 0 \ \text{on}\ \widehat \Gamma_s \right\},\\
	\widehat \V_D^{\mathcal M}&:= \left\{\widehat{\bm d} :\Orf\rightarrow\mathbb R^d\mid  \widehat{\bm d}\in \bm H^1(\Orf),\  \widehat{\bm d} =\bm 0 \ \text{on}\ \widehat \Gamma_f \right\},\\
	\widehat \V_0^{\mathcal M}&:= \left\{\mf :\Orf\rightarrow\mathbb R^d\mid  \mf\in \bm H^1(\Orf),\ \mf=\bm 0 \ \text{on}\ \partial\Orf \right\},\\
	\V^{FSI}(t)&:=\left\{ ( \v^f, \widehat \v^s)\in\V^f_0(t)\times\widehat \V^s_0  \mid  \v^f=\widehat \v^s\circ \A^{-1} \ \text{on}\  \Gamma(t) \right\}.
\end{align*}
Then, in view of \eqref{time_derivative}, we describe the ALE weak formulation for FSI as follows:

Find $\left((\u^f,\widehat \u^s),p^f,\ds, \etaf\right)\in \V^{FSI}(t)\times Q^f(t)\times\widehat \V^s_0 \times \widehat \V_D^{\mathcal M}$,  with $\ds=\etaf$ on $\widehat\Gamma$ and $\w =\partial_t\etaf \circ \A^{-1}$, such that 
\begin{align}\label{weak_pro}
\left\{
\begin{aligned}
\rho^f(\partial_t^M\u^f+\nabla\w\u^f-\tfrac{1}{2} \nabla\cdot\w\u^f, \v^f)_{\Of(t)}+\rho^f T^{f,t}(\u^f-\w, \u^f,\v^f)\\+2\rho^f\mu^fB^{f,t}(\u^f,\v^f) 
	-Q^{f,t}(p^f,\v^f ) +\rho^s(\partial_t\widehat \u^s,\widehat \v^s )_{\Ors}
	\\+2\mu^sB^s( \ds,\widehat \v^s) +\lambda^s(\widehat\nabla\cdot\ds, \widehat\nabla\cdot\widehat\v^s)_{\Ors}-(\bm g^f, \v^f)_{\Of(t)}&=0,
	\\Q^{f,t}(q^f,\u^f ) &=0,\\
	(\partial_t \ds,\ms )_{\Ors}-(\widehat \u^s,\ms )_{\Ors}&=0,\\
	(\kappa^f\widehat\nabla \etaf,\widehat\nabla \mf)_{\Orf} &=0, 
\end{aligned}
\right.
\end{align}
for all $ \left((\v^f,\widehat\v^s), q^f, \ms,\mf\right)\in \V^{FSI}(t)\times Q^f(t)\times\widehat \V_0^s \times\widehat \V_0^{\mathcal M}$, where the bilinear/trilinear forms   in \eqref{weak_pro} are defined respectively by
\begin{align*}
    &B^{f,t}(\u^f,\v^f) :=
   \left( \bm D(\u^f),\bm D(\v^f)\right)_{\Of(t)},\qquad Q^{f,t}(q^f,\v^f ) :=\left(q^f,\nabla\cdot\v^f\right)_{\Of(t)},\\
    &B^s( \ds,\widehat \v^s) :=\left( \bm D(\ds), \bm D(\widehat \v^s)\right)_{\Ors},\qquad T^{f,t}(\w, \u^f,\v^f) :=\tfrac{1}{2} \left( \v^f\cdot(\nabla\u^f)- \u^f\cdot(\nabla\v^f),  \w \right)_{\Of(t)}.
\end{align*}
\begin{myremark}
	By means of a change of variables,  the identity   $((\bm  a\cdot\nabla )\bm  b, \bm c)=(\bm c\otimes \bm a\bm :\nabla  \bm b ) $ and Lemma \ref{lem2-2}, we may reformulate the weak formulation \eqref{weak_pro} from the current domain $\Omega(t)$ onto the reference domain $\widehat\Omega$ as follows: 
	
Find $\left((\widehat\u^f,\widehat \u^s), \widehat p^f,\ds, \etaf\right)\in \widehat\V^{FSI}\times \widehat Q^f\times\widehat \V^s_0 \times \widehat \V_D^{\mathcal M}$,  with $\ds=\etaf$ on $\widehat\Gamma$, such that 
\begin{align}\label{weak_pro_ref}
\left\{
\begin{aligned}
\rho^f\left( (\P( \partial_t^M\widehat \u^f)+(\widehat\nabla\widehat{\bm{w}}\widehat{\bm F}_f^{-\top}-\tfrac{1}{2}\widehat{\bm F}_f^{-1}\bm{:}\widehat\nabla\widehat{\bm{w}}^f)\P(\widehat \u),  \widehat \J_f\P(\widehat \v^f)\right)_{\Orf}
	\\+2\rho^f\mu^f\widehat B^f(\widehat\u^f,\widehat\v^f)+\rho^f \widehat T^f(\widehat\u^f-\widehat\w, \widehat\u^f,\widehat\v^f)-\widehat Q^f(p^f,\v^f )+\rho^s(\partial_t\widehat \u^s,\widehat \v^s )_{\Ors}
	\\+2\mu^sB^s( \ds,\widehat \v^s) +\lambda^s(\widehat{\nabla}\cdot\ds, \widehat{\nabla}\cdot\widehat\v^s)_{\Ors}-(\widehat{ \bm g}^f, \J_f\P(\widehat \v^f))_{\Orf}&=0,\\
	\widehat Q^f(\widehat q^f,\widehat \u^f ) &=0,\\
	(\partial_t \ds,\ms )_{\Ors}-(\widehat \u^s,\ms )_{\Ors}&=0,\\
	(\kappa^f \widehat\nabla \etaf,\widehat\nabla \mf)_{\Orf} &=0, 
\end{aligned}
\right.
\end{align}
for all $ \left((\widehat \v^f,\widehat\v^s), q^f, \ms,\mf\right)\in \widehat\V^{FSI}\times \widehat Q^f\times\widehat \V_0^s \times\widehat \V_0^{\mathcal M}$, where   
\begin{align*}
& \widehat\V^{FSI}:=\left\{ ( \widehat\v^f, \widehat \v^s)\in\bm H^1(\Orf)\times\widehat \V^s_0  \mid   \widehat\v^f=\widehat \v^s \ \text{on}\  \widehat\Gamma, \ \widehat\v^f=0 \text{ on }\  \widehat\Gamma_f \right\},\quad \widehat Q^f:=  L_0^2(\Orf) ,\\
&\widehat {\bm{w}}^f =\partial_t\etaf, \quad 
\bm D(\widehat \u^f):=\tfrac{1}{2} \left(\widehat \nabla\P(\widehat \u^f)\widehat{\bm F}_f^{-1}+ \widehat{\bm F}_f^{-\top}(\widehat \nabla \P(\widehat \u^f))^{-\top}\right), \\
&\widehat B^f(\widehat\u^f,\widehat\v^f) :=\left(\bm D(\widehat \u^f),\bm D(\widehat \v^f)\widehat \J_f\right)_{\Orf},\qquad \widehat Q^f(\widehat p^f,\widehat \v^f ) := \left(\widehat p^f,\widehat \nabla\cdot \widehat \v^f\right)_{\Orf},\\
    &\widehat T^f(\widehat\u^f-\widehat{\bm  w}^f, \widehat\u^f,\widehat\v^f): =\tfrac{1}{2} \left( \big(\P(\widehat \v^f) \otimes (\P( \widehat\u^f)-\widehat{\bm  w}^f), \widehat\nabla \P(\widehat \u^f)\widehat{\bm F}_f^{-1}\J_f\big)_{\Orf}\right.
    \\&\qquad\qquad\qquad\qquad\qquad\quad - \left.\big(\P(\widehat \u^f)\otimes (\P( \widehat\u^f)-\widehat{\bm  w}^f), \widehat\nabla \P(\widehat \v^f)\widehat{\bm F}_f^{-1}\widehat \J_f  \big)_{\Orf}\right).	
\end{align*}
\end{myremark}

Let $E(t)$ denote the sum of the kinetic energy of the fluid, the kinetic and elastic energy of the structure, i.e., 
\begin{align*}
	E(t):=\frac{\rho^f}{2}\|\u^f(t)\|^2_{L^2({\Of(t)})}+\frac{\rho^s}{2} \|\widehat \u^s(t)\|^2_{L^2(\Ors) }+\left (\mu^s\|\widehat{ \bm D}( \ds(t))\|^2_{L^2(\Ors )}+\frac{\lambda^s }{2}\|\widehat\nabla\cdot \ds(t)\|^2_{L^2(\Ors )}\right ).
\end{align*}
Then we can follow a standard routine to get  the  energy stability of the weak solution.
\begin{mytheorem} 
	Let $\left((\u^f,\widehat \u^s),p^f,\ds, \etaf\right)\in \V^{FSI}(t)\times Q^f(t)\times\widehat \V^s_0 \times \widehat \V_D^{\mathcal M}$ be a solution to the weak problem \eqref{weak_pro}, then for any $T>0$ it holds   the   energy identity 
	\begin{align}\label{sta_weak}
	\begin{split}
	E(T)+ \int_0^T 2\rho^f\mu^f\|\bm D(  \u^f )\|^2_{L^2(\Of(t))}dt = \int_0^T (\bm g^f ,\u_f )_{\Of(t) }dt+E(0).
	\end{split}
	\end{align}
	In particular, if the source term $\bm g^f=\bm 0$, then 
	\begin{align}\label{gf=0}
		E(T)+\int_{0}^{T} 2\rho^f\mu^f\|\bm D(  \u^f )\|^2_{L^2(\Of(t))}dt=E(0),
	\end{align} 
	which means that the energy $E(t)$   decreases monotonically over time $0<t\leq T$.	
	\begin{proof}
		Taking  $ (\v^f,\widehat\v^s, q^f)=(\u^f,\widehat \u^s,p^f)$ in \eqref{weak_pro} and using $T^{f,t}(\u^f-\w, \u^f,\u^f)=0$,  $\partial_t \ds=\widehat \u^s$ and \eqref{Reynolds}, we have
		\begin{align*}
			(\bm g^f, \u^f)_{\Of(t)}
=&\rho^f(\partial_t^M\u^f+\nabla\w\u-\tfrac{1}{2} \nabla\cdot\w\u, \u^f)_{\Of(t)}+2\rho^f\mu^f\|\bm D(\u^f)\|^2_{L^2(\Of(t))}+\frac{\rho^s}{2}\frac{d}{ dt} \|\widehat \u^s\|^2_{L^2(\Ors) }
			\\&\ \ +2\mu^sB^s( \ds,\partial_t \ds )_{\Ors}+\lambda^s(\widehat \nabla\cdot\ds,\widehat\nabla\cdot\partial_t\ds )_{\Ors}\\
			=&\frac{\rho^f}{2}\frac{d}{dt}\|\u^f\|^2_{L^2(_{\Of(t)})}+\frac{\rho^s}{2}\frac{d}{ dt} \|\widehat \u^s\|^2_{L^2(\Ors) }+2\rho^f\mu^f\|\bm D(\u^f)\|^2_{L^2(\Of(t))}
			\\
			&\ \ +\mu^s\frac{d}{dt}\|\D(\ds)\|^2_{L^2(\Ors )}+\frac{\lambda^s }{2}\frac{d}{dt}\|\widehat\nabla\cdot \ds\|^2_{L^2(\Ors )}\\
			=&\frac{dE(t)}{dt}+2\rho^f\mu^f\|\bm D(\u^f)\|^2_{L^2(\Of(t))}.						
		\end{align*}
Integrating this relation from $0$ to $T$ gives the desired energy identity \eqref{sta_weak} immediately, and the result \eqref{gf=0} follows. 	
	\end{proof}
\end{mytheorem}

\section{Temporal semi-discretization}\label{sec_time}
In this section, we first introduce the time-discrete Piola-type ALE mapping. Based on this, we present two semi-discrete schemes corresponding to the non-conservative and conservative formulations, and establish the associated energy stability results.

\subsection{The semi-discrete Piola-type ALE mapping}\label{sec_ALE_time}
Let $0=t_0<t_1<...<t_{\mathrm{N}}=T$ be a uniform division of the time interval $[0,T]$,  with $t_n=n\tau$ ($n=0,1,\cdots, N$)  and  the time step size $\tau=T/N$. We  discretize in time the ALE mapping $\A(\widehat \x, t)$ in \eqref{ALE map} by using a continuous piecewise linear interpolation. To this end, let  
\begin{align}\label{semi-discrete ALE-n}
\A^n(\widehat \x)=\widehat\x+ \widehat{\bm \eta}^{f,n}(\widehat\x):\Orf\rightarrow \Of^n:=\A^n(\Orf) 
\end{align}
be an approximation of the ALE mapping $\A(\widehat \x,t_n)$ for $n=1,\dots, N$. Then the linear interpolation of  the mapping $\A(\widehat \x, t)$   for $t\in [t_{n-1},t_{n}]$, denoted by 
$$ \A^t(\widehat \x,t): \Orf\rightarrow \Of^t:=\A^t(\Orf),$$  is defined as
\begin{align}\label{semi-discrete ALE}
	 \x=\A^t(\widehat \x,t):=\frac{t-t_{n-1}}{\tau}\A^{n}(\widehat \x)+\frac{t_{n}-t}{\tau}\A^{n-1}(\widehat \x), \ \ t\in[t_{n-1}, t_{n}].
\end{align}
Notice that $ \A^t(\widehat \x,t_n)=\A^n(\widehat \x)$ for any $n$.

We  define the fluid deformation gradient $\widehat{\bm F}_f^t$   and   its Jacobian determinant $\widehat{\J}_f^t$  
 as 
\begin{align}\label{semi-fluid-deformation}
	  \widehat{\bm F}_f^t:=\widehat \nabla\A^t=\frac{t-t_{n-1}}{\tau}\widehat \nabla\A^{n}+\frac{t_{n}-t}{\tau}\widehat \nabla\A^{n-1} , \quad \widehat{\J}_f^t:=\det(\widehat{\bm{F}}_f^t), \quad  t\in [t_{n-1}, t_{n}]
\end{align}
and set
 \begin{align*}
 &\widehat{\bm F}_f^n:=\widehat{\bm F}_f^t|_{t = t_n}=\widehat \nabla\A^n, \quad \widehat{\J}_f^n: = \widehat{\J}_f^t|_{t = t_n}=\det(\widehat{\bm {F}}_f^n).
 \end{align*}
Similar to \eqref{J_F}, with   \eqref{semi-fluid-velocity} and \eqref{semi-fluid-deformation} there hold for  $t\in [t_{n-1}, t_{n}] $
\begin{align}\label{J_F_D}
	\partial_t\widehat{\J}_f^t=\widehat \J_f^t(\widehat{\bm F}_f^{t})^{-\top}\bm{:}\widehat\nabla\widehat{\bm w}^{f,n}=\nabla\cdot{\bm w}_t^{f,n}\widehat \J_f^t,\qquad {\partial_t \widehat{\bm F}^t_f }=\widehat\nabla\widehat{\bm w}^{f,n}=\nabla{\bm w}_t^{f,n}\widehat{\bm F}^t_f.
\end{align}
 
With the temporally semi-discrete ALE mapping 
\eqref{semi-discrete ALE}, we set the fluid domain velocities ${\bm w}^{f,n}$  on $\Omega_f^n$  
and ${\bm w}^{f,n}_t$ on $\Omega_f^t$   as follows:
\begin{align}\label{semi-fluid-velocity}
\left\{
\begin{array}{l}
{\bm w}^{f,n}(\x)={\bm w}^{f,n}\circ\A^n(\widehat \x)=\widehat{\bm w}^{f,n}(\widehat \x) 
	:=\left.{\partial_t \A^t(\widehat \x,t)}\right|_{t=t_n}  =\frac1\tau \left(\A^{n}(\widehat \x)-\A^{n-1}(\widehat \x)\right),\\
	{\bm w}^{f,n}_t(\x,t):=	\widehat{\bm w}^{f,n}\circ (\A^t)^{-1}(  \x,t), \quad t\in [t_{n-1},t_{n}].
\end{array}
	\right. 
	\end{align}
For convenience, we write 
	\begin{align}\label{w-nj}
{\bm w}_j^{f,n}:= {\bm w}^{f,n}_t|_{t=t_j},  \quad j=n, n-1
\end{align}
and  easily see ${\bm w}^{f,n}_n ={\bm w}^{f,n} .$  
	
Introduce the semi-discrete Piola-type ALE mapping
\begin{align}\label{semi-Piola}
\v\circ \A^t=\P^t(\widehat{\v})  := \frac{1}{\widehat{\J}_f^t} \widehat{\bm{F}}_f^t \widehat{\v}, \quad \text{for }\widehat\v\in   \bm H(\widehat \div,\Orf) 
\end{align}
and, similar to  \eqref{P-material},  define the corresponding Piola-type ALE  material derivative of $\v$ as 
\begin{align}\label{P-material-semi}
\partial ^M_t \v(\x,t):=\P^t\big( {\partial_t \widehat \v  (\widehat \x, t)} \big) \quad \text{for } \x=\A^t(\widehat \x,t).
\end{align} 
For simplicity we set 
 \begin{align}\label{semi-Piola-v}
 & \v^n\circ \A^n=\P^n(\widehat{\v}) :=\P^t(\widehat{\v})|_{t = t_n}= \frac{1}{\widehat \J_f^n} \widehat{\bm{F}}_f^n \widehat{\v}.
 \end{align}
 Take $\X_i^j:= \A^j\circ (\A^i)^{-1}$, then by \eqref{semi-Piola} we get  
\begin{align}\label{X_de}
	\v \circ\X_i^j&=(\frac{\widehat{\bm F}_f^{i}}{\widehat \J_f^{i}}\widehat\v) \circ (\A^i)^{-1} = \left (\frac{\widehat \J_f^{j}}{\widehat \J_f^{i}}\widehat{\bm F}_f^{i}(\widehat{\bm F}^j_f)^{-1}\v\circ \A^j\right )\circ (\A^i)^{-1}, \quad \forall \v\in \V^f(t_j)   .
\end{align}

\begin{myremark}
As the semi-discrete ALE mapping $\A^t$ is just a special case of the general ALE mapping $\A$,  the identities \eqref{Reynolds}-\eqref{Reynolds00}  in  Lemma \ref{lemma2.3} still hold for  $\v=\P^t(\widehat{\v})\circ (\A^t)^{-1} ,\u=\P^t(\widehat{\u})\circ (\A^t)^{-1}  $ with $ \widehat\v,  \widehat\u\in   H^1 (0,T;\bm H(\widehat {\rm{div}},\Orf)).$ Hence,  for $\v=\u$ we have the following  integral-form  Reynolds transport formulas:
\begin{align}\label{GCL_d-1}
	\begin{split}
	\|\v^n\|^2_{L^2(\Of^{n} )}-\|\v^{n-1}\|^2_{L^2(\Of^{n-1} )}
   		=&2\int^{t_{n}}_{t_{n-1}} \left(\partial ^M_t \v+\nabla {\bm{w}_t^{f,n}}\v -\frac12\nabla\cdot \bm{w}_t^{f,n}\v  ,  \v\right)_{\Of(t)} dt\\
		 =&2\int^{t_{n}}_{t_{n-1}} \left(\partial _t \v+ ({\bm{w}_t^{f,n}}\cdot \nabla) \v  +\frac12\nabla\cdot \bm{w}_t^{f,n}\v ,  \v\right)_{\Of(t)} dt.
	\end{split}
\end{align}
which is an  integral form of the  GCL  \eqref{gcl-conti}, and that 
\begin{align}\label{GCL_d-3}
	\|\v^n\|^2_{L^2(\Of^{n} )}-\|\v^{n-1}\|^2_{L^2(\Of^{n-1} )}
   		=&2\int^{t_{n}}_{t_{n-1}} \left((\nabla {\bm{w}_t^{f,n}} -\frac12\nabla\cdot \bm{w}_t^{f,n})\v  ,  \v\right)_{\Of(t)} dt
\end{align}
 if $  \partial ^M_t \v=0$ or equivalently,  $\partial_t \widehat \v =0$.  
 \end{myremark}

\subsection{Temporally semi-discrete schemes}\label{semi-discrete schemes}
We first introduce the following temporally semi-discrete  finite element spaces:
\begin{align*}
	\V^{f,n}&:=\left\{ \v^f:\Of^n\rightarrow\mathbb R^d\mid \v^f=\P^n(\widehat\v^f)\circ (\A^n)^{-1},\ \widehat\v^f\in \bm H^1(\Orf)  \right\},\\
	\V^{f,n}_0&:=\left\{ \v^f\in \V^{f,n}\mid \v=\bm 0\ \text{on}\ \Gamma_f^n  \right\},\ \bm W^{f,n}:=\left\{\v^f\in \V^{f,n}_0\mid \nabla\cdot \v^f=0 \right\},\\
	Q^{f,n}&:=\left\{ q^f:\Of^n\rightarrow\mathbb R\mid q^f=\widehat q^f\circ (\A^n)^{-1},\ \widehat q^f\in L_0^2(\Orf)  \right\},\\
	\V^{FSI,n}&:=\left\{ ( \v^f, \widehat \v^s)\in\V^{f,n}_0\times\widehat \V^s_0  \mid  \v^f=\widehat \v^s\circ (\A^n)^{-1} \ \text{on}\  \Gamma^n \right\}.
\end{align*}

For any  time-dependent function $g$, denote by $g^n$ an approximation of $g(t_n)$ ($n=1,2,\cdots,N$) 
 and  introduce the following two  backward difference operators:
\begin{align}\label{D_t}
	\tilde{D}_t g^{n}:=\frac{g^{n}-g^{n-1}}{\tau},\qquad \tilde{D}_t ^Mg^{n}:=\frac{g^{n}-g^{n-1}\circ \X_{n}^{n-1} }{\tau}.
\end{align}

Given $(\u^{f,0},\widehat \u^{s,0},\widehat{\bm d}^{s,0})=(\u_0^f,\widehat \u^{s}_0,\widehat{\bm d}^{s}_0)$, based on  equations \eqref{time_derivative} and weak formulation \eqref{weak_pro} we  give the following   two types of implicit temporal  semi-discretizations for the FSI model  \eqref{fluid_eq}-\eqref{Dyn_cond}: 
\begin{itemize}
	\item \textbf{Scheme-A}:  
	
Find $\left((\u^{f,n},\widehat \u^{s,n}),p^{f,n},\widehat{\bm d}^{s,n}, \widehat{\bm \eta}^{f,n}\right)\in \V^{FSI,n}\times Q^{f,n}\times\widehat \V^s_0 \times \widehat \V_D^{\mathcal M}$ for $n=1,2,\cdots,N$, with $\widehat{\bm \eta}^{f,n}=\widehat{\bm d}^{s,n}$ on $\widehat\Gamma$ and  $\bm w^{f,n}=\tilde{D}_t\widehat {\bm \eta}^{f,n}  \ \circ (\A^{n})^{-1}$, such that 
\begin{align}\label{semi_discrete_A}
\left\{
\begin{aligned}
\rho^f(\tilde{D}_t^M\u^{f,n}+\nabla{\bm{w}^{f,n}} \u^{f,n}-\tfrac{1}{2} \nabla\cdot {\bm{w}^{f,n}} \u^{f,n}, \v^f)_{\Of^{n}}\\+\rho^f T^{f,n}(\u^{f,n}-{\bm{w}^{f,n}}, \u^{f,n},\v^f)+2\rho^f\mu^fB^{f,n}(\u^{f,n},\v^f)
\\-Q^{f,n}(p^{f,n},\v^f )+\rho^s(\tilde{D}_t\widehat \u^{s,n},\widehat \v^s )_{\Ors}+2\mu^sB^s( \widehat{\bm{d}}^{s,n}  ,\widehat \v^s) 
\\+\lambda^s(\widehat\nabla\cdot \widehat{\bm{d}}^{s,n}, \widehat\nabla\cdot\widehat\v^s)_{\Ors}-(\bm g^{f,n}, \v^f)_{\Of^{n}}&=0,\\
	Q^{f,n}(q^f,\u^{f,n} )&=0,\\
	(\tilde{D}_t \widehat{\bm{d}}^{s,n},\ms )_{\Ors}-(\widehat \u^{s,n},\ms )_{\Ors}&=0,\\
	(\kappa^f\widehat\nabla \widehat{\bm{\eta}}^{f,n},\widehat\nabla \mf)_{\Orf} &=0, 
\end{aligned}
\right.
\end{align}
   for all $ \left((\v^f,\widehat\v^s), q^f, \ms,\mf\right)\in \V^{FSI,n}\times Q^{f,n}\times\widehat \V_0^s \times\widehat \V_0^{\mathcal M}$.   
Here and in what follows,
 \begin{align*}  
     &T^{f,n}(\bm{w}^{f,n}, \u^{f,n},\v^f) :=\tfrac{1}{2} \left( \v^f\cdot(\nabla\u^{f,n})- \u^{f,n}\cdot(\nabla\v^f),  \bm{w}^{f,n} \right)_{\Of^n},\\
     &B^{f,n}(\u^{f,n},\v^f) :=
   \left( \bm D(\u^{f,n}),\bm D(\v^f)\right)_{\Of^n},\qquad Q^{f,n}(q^f,\v^f ) :=\left(q^f,\nabla\cdot\v^f\right)_{\Of^n}.
\end{align*}
\item  \textbf{Scheme-B}:  
	
Find $\left((\u^{f,n},\widehat \u^{s,n}),p^{f,n},\widehat{\bm d}^{s,n}, \widehat{\bm \eta}^{f,n}\right)\in \V^{FSI,n}\times Q^{f,n}\times\widehat \V^s_0 \times \widehat \V_D^{\mathcal M}$  for $n=1,2,\cdots,N$, with $\widehat{\bm \eta}^{f,n}=\widehat{\bm d}^{s,n}$ on $\widehat\Gamma$ and  $\bm w^{f,n}=\tilde{D}_t\widehat {\bm \eta}^{f,n}  \ \circ (\A^{n})^{-1}$, such that 
\begin{align}\label{semi_discrete_B}
\left\{
\begin{aligned}
\frac{\rho^f}{\tau} \left((\u^{f,n}, \v^f)_{\Of^{n}}- (\u^{f,n-1}, \v^f\circ\X_{n-1}^{n} )_{\Of^{n-1}}\right)
\\-\rho^f\left((\nabla{\bm{w}^{f,n}} )^{\top}\u^{f,n}-\tfrac{1}{2} \nabla\cdot {\bm{w}^{f,n}} \u^{f,n}, \v^f\right)_{\Of^{n}}
+\rho^f T^{f,n}(\u^{f,n}-{\bm{w}^{f,n}}, \u^{f,n},\v^f) 
\\+2\rho^f\mu^fB^{f,n}(\u^{f,n},\v^f)-Q^{f,n}(p^{f,n},\v^f )+\rho^s(\tilde{D}_t\widehat \u^{s,n},\widehat \v^s )_{\Ors}
\\+2\mu^sB^s( \widehat{\bm{d}}^{s,n}  ,\widehat \v^s) +\lambda^s(\widehat\nabla\cdot \widehat{\bm{d}}^{s,n}, \widehat\nabla\cdot\widehat\v^s)_{\Ors}-(\bm g^{f,n}, \v^f)_{\Of^{n}}&=0,\\
	Q^{f,n}(q^f,\u^{f,n} )&=0,\\
	(\tilde{D}_t \widehat{\bm{d}}^{s,n},\ms )_{\Ors}-(\widehat \u^{s,n},\ms )_{\Ors}&=0,\\
	(\kappa^f\widehat\nabla \widehat{\bm{\eta}}^{f,n},\widehat\nabla \mf)_{\Orf} &=0, 
\end{aligned}
\right.
\end{align}
   for all $ \left((\v^f,\widehat\v^s), q^f, \ms,\mf\right)\in \V^{FSI,n}\times Q^{f,n}\times\widehat \V_0^s \times\widehat \V_0^{\mathcal M}$.
\end{itemize}

\begin{myremark}\label{AB-equal}
Notice that Scheme-A is constructed directly from  the weak formulation \eqref{weak_pro}, where the  term 
$ (\partial_t^M\u^f+\nabla\w\u^f-\tfrac{1}{2} \nabla\cdot\w\u^f, \v^f)_{\Of(t)}$ is discretized as 
 $$ (\tilde{D}_t^M\u^{f,n}+\nabla{\bm{w}^{f,n}} \u^{f,n}-\tfrac{1}{2} \nabla\cdot {\bm{w}^{f,n}} \u^{f,n}, \v^f)_{\Of^{n}}.$$
   Scheme-B is based on  \eqref{weak_pro} and the identity
   \begin{align*}
  (\partial_t^M\u^f+\nabla\w\u^f-\tfrac{1}{2} \nabla\cdot\w\u^f, \v^f)_{\Of(t)}
   = \frac{d}{dt}\bigl(\u,\v \bigr)_{\Of(t)}
   - \Bigl( (\nabla \w)^{\top}\u-\tfrac{1}{2} \nabla \cdot \w\,\u  , \v \Bigr)_{\Of(t)} 
   \end{align*}
  for $ \partial_t^M \v^f = 0 $ (cf.    Lemma \ref{time-conv}), where the right-hand side of the identity is   discretized as   
   $$
   \frac{(\u^{f,n}, \v^f)_{\Of^{n}}-(\u^{f,n-1}, \v^f\circ\X_{n-1}^{n} )_{\Of^{n-1}}}{\tau} 
   -\left((\nabla{\bm{w}^{f,n}} )^{\top}\u^{f,n}-\tfrac{1}{2} \nabla\cdot {\bm{w}^{f,n}} \u^{f,n}, \v^f\right)_{\Of^{n}}.
$$ 
In particular, Scheme-A and Scheme-B are   equivalent  if $\nabla{\bm{w}^{f,n}}=0$ for any $n$, which means that the fluid   domain is fixed all the time. 
\end{myremark}
 
 \subsection{Energy stability analysis}
 
 We first introduce some notations.
 For $n=1,2,\cdots,N$ and $t\in [t_{n-1},t_n]$  we define
 \begin{align}\label{def:FFG}
& \mathbb{F}_{n}(t):=\widehat{\bm F}^t_f(\widehat{\bm F}^n_f)^{-1},\quad \mathbb{J}_{n}(t):=\frac{\widehat J_f^n}{\widehat J_f^t}, \quad \mathbb{G}_{n}(t):=\mathbb{J}_{n}(t)\mathbb{F}_{n}(t),
 \end{align}
 and denote 
\begin{align}\label{def:G}
\left\{
\begin{aligned}
	G_{1, n}:=&  \sup_{t\in[t_{n-1},t_n]}
	\Bigl(1+ \tfrac12\|(\mathbb{G}_{n-1}(t))^{-1}\|_{L^\infty({\Omega_f^{n-1}})} \|\mathbb{G}_{n-1}(t)\|_{L^\infty({\Omega_f^{n-1}})}\Bigr)  \|\mathbb{G}_{n-1}(t)\|_{L^\infty({\Omega_f^{n-1}})},\\
	 G_{2, n}  :=&\sup_{t\in [t_{n-1},t_n]}\tfrac{1}{2} \Bigl(\|\mathbb{J}_{n}(t)\|_{L^\infty(\Omega_f^n)}+\bigl(2+\tfrac12\|\mathbb{F}_{n}(t)\|^2_{L^\infty(\Omega_f^n)}\bigr) \|(\mathbb{F}_{n}(t))^{-\top}\|_{L^\infty(\Omega_f^n)}\|\mathbb{G}_{n}(t)\|_{L^\infty(\Omega_f^n)} 
	\\ &\ +\tfrac12\|(\mathbb{F}_{n}(t))^{-\top}\|^2_{L^\infty(\Omega_f^n)}\|\mathbb{F}_{n}(t)\|_{L^\infty(\Omega_f^n)}\|\mathbb{G}_{n}(t)\|_{L^\infty(\Omega_f^n)}
	 \Bigr),
	\end{aligned}
	\right.	
\end{align}
where $\|\mathbb A\|_{L^\infty(\Omega_f^n)}:=\operatorname*{ess\,sup}\limits_{x\in\Omega_f^n}\left(\mathbb A\bm{:} \mathbb A\right)^{1/2}$ for any $d\times d$ matrix function $\mathbb A(\x)$ with ` $\bm{:}$ ' denoting the Frobenius inner product.

We have the following lemma for the stability analysis of the two semi-discrete schemes.
\begin{mylemma}
Let  $ \widehat\v \in  \bm H(\widehat {\rm{div}},\Orf)$ and $\v=\P^t(\widehat{\v})\circ (\A^t)^{-1}  $. Then for $n=1,2,\cdots,N$ there hold
\begin{align}\label{eq::rem_J}
&\int_{t_{n-1}}^{t_n}\Bigl((\nabla \bm w_t^{f,n}-\tfrac12\nabla\!\cdot \bm w_t^{f,n})\v,\v\Bigr)_{\Omega_f(t)} dt
\leq 
  \tau G_{1, n}\|\nabla{\bm{w}}_{n-1}^{f,n}\|_{L^\infty({\Omega_f^{n-1}})}
\|\v^{n-1}\|^2_{L^2({\Omega_f^{n-1}})}
\end{align}
and 
\begin{align}\label{eq::rem_J2}
\begin{aligned}
&\int_{t_{n-1}}^{t_n}\Bigl((\nabla \bm w_t^{f,n}-\tfrac12\nabla\!\cdot \bm w_t^{f,n})\v,\v\Bigr)_{\Omega_f(t)} dt\, - \tau\Bigl((\nabla \bm w^{f,n}-\tfrac12\nabla\!\cdot \bm w^{f,n})\v^n,\v^n\Bigr)_{\Omega_f^n}\\
 \leq &\ \  \tau^2G_{2, n}\|\nabla\bm w^{f,n}\|^2_{L^\infty(\Omega_f^n)}
\|\v^{n}\|^2_{L^2({\Omega_f^{n}})}.
\end{aligned}
\end{align}
\begin{proof} We first show \eqref{eq::rem_J}. 
Apply \eqref{semi-fluid-velocity},  \eqref{J_F_D}  and variable substitution to get
\begin{align}\label{gt}
	g(t) :=&\Bigl((\nabla \bm w_t^{f,n}-\tfrac12\nabla\!\cdot \bm w_t^{f,n})\v,\v\Bigr)_{\Omega_f(t)}\nonumber\\
	=&\left(\left(\widehat\nabla\widehat{\bm  w}^{f,n}(\widehat{\bm F}_f^{t})^{-1}-\tfrac12((\widehat{\bm F}_f^{t})^{-\top}\bm{:}\widehat\nabla\widehat{\bm w}^{f,n} )\,\bm I \right) \frac{\widehat{\bm F}^t_f}{\widehat J_f^t}\widehat{\bm v},\frac{\widehat{\bm  F}^t_f}{\widehat J_f^t}\widehat{\v}\,\widehat J_f^t\right)_{\widehat\Omega_f}\nonumber\\
	=&\Bigl(\widehat\nabla\widehat{\bm  w}^{f,n}\widehat{\bm v},\frac{\widehat{\bm  F}^t_f}{\widehat J_f^t}\widehat{\v}\,\Bigr)_{\widehat\Omega_f}-\tfrac12\Bigl(((\widehat{\bm F}_f^{t})^{-\top}\bm{:}\widehat\nabla\widehat{\bm w}^{f,n} ) \frac{\widehat{\bm F}^t_f}{\widehat J_f^t}\widehat{\bm v},\widehat{\bm  F}^t_f\widehat{\v}\Bigr)_{\widehat\Omega_f}.
\end{align}
Using variable substitution again, in view of  \eqref{def:FFG} and the identity $A\bm{:}(BC)=(A C^{\top})\bm{:}B$ for $d\times d$ matrices $A,B$ and $C$  we further have 
\begin{align*}
\begin{split}
g(t)=&
\left( \nabla{\bm{w}}_{n-1}^{f,n} \widehat{\bm F}_f^{n-1}(\widehat{\bm F}_f^{t})^{-1} \mathbb{G}_{n-1}(t)\v^{n-1},\mathbb{G}_{n-1}(t)\v^{n-1} \frac{\widehat J_f^t}{\widehat J_f^{n-1}} \right)_{\Omega_f^{n-1}}  
\\
&
\quad -\tfrac12\left((\widehat{\bm F}_f^{t})^{-\top}\!\bm{:}(\nabla{\bm w}_{n-1}^{f,n}\widehat{\bm F}_f^{n-1}  )
 \mathbb{G}_{n-1}(t)\v^{n-1},\mathbb{G}_{n-1}(t)\v^{n-1} \frac{\widehat J_f^t}{\widehat J_f^{n-1}} \right)_{\Omega_f^{n-1}}  
\\
=& 
\left(\frac{\widehat J_f^t}{\widehat J_f^{n-1}}  \nabla{\bm{w}}_{n-1}^{f,{n}} \widehat{\bm F}_f^{{n-1}}(\widehat{\bm F}_f^{t})^{-1} 
 \ \mathbb{G}_{{n-1}}(t)\v^{n-1},\mathbb{G}_{{n-1}}(t)\v^{n-1} \right)_{\Omega_f^{{n-1}}}   
\\
& 
\quad -\tfrac12 \left(\frac{\widehat J_f^t}{\widehat J_f^{n-1}}  \Bigl(\bigl(\widehat{\bm F}_f^{{n-1}}\bigl(\widehat{\bm F}_f^{t})^{-1}\bigr)^{\top}\!\bm{:}\nabla{\bm w}_{n-1}^{f,{n}} 
\Bigr)\ \mathbb{G}_{{n-1}}(t)\v^{n-1},\mathbb{G}_{{n-1}}(t)\v^{n-1} \right)_{\Omega_f^{{n-1}}}   
\\
=& \left( \nabla{\bm{w}}_{n-1}^{f,{n}}\v^{n-1},\mathbb{G}_{{n-1}}(t)\v^{n-1} \right)_{\Omega_f^{{n-1}}}  - \tfrac12 
\left(  \bigl((\mathbb{G}_{{n-1}}(t))^{- \top}\!\bm{:}\nabla{\bm w}_{n-1}^{f,n}\bigr)\mathbb{G}_{n-1}(t)\v^{n-1},\mathbb{G}_{n-1}(t)\v^{n-1} \right)_{\Omega_f^{n-1}} ,
\end{split}
\end{align*}
which plus   the H\"{o}lder's  inequality  gives
\begin{align*}
\int_{t_{n-1}}^{t_n}g(t)dt
  \leq &\int_{t_{n-1}}^{t_n}
\Bigl (1+ \tfrac12\|(\mathbb{G}_{n-1}(t))^{-1}\|_{L^\infty({\Omega_f^{n-1}})} \|\mathbb{G}_{n-1}(t)\|_{L^\infty({\Omega_f^{n-1}})}\Bigr)  
\\ &\quad \cdot\|\mathbb{G}_{n-1}(t)\|_{L^\infty({\Omega_f^{n-1}})}\|\nabla{\bm{w}}_{n-1}^{f,n}\|_{L^\infty({\Omega_f^{n-1}})}\|\v^n\|^2_{L^2({\Omega_f^{n-1}})} dt\\
\leq  &\, \tau G_{1,n} \|\nabla{\bm{w}}_{n-1}^{f,n}\|_{L^\infty({\Omega_f^{n-1}})}\|\v^{n-1}\|^2_{L^2({\Omega_f^{n-1}})},	
\end{align*}
i.e. the desired estimate \eqref{eq::rem_J} follows.

Next let us prove \eqref{eq::rem_J2}. In view of \eqref{gt}, \eqref{J_F_D} and \eqref{def:FFG}, we have
\begin{align*}
	\partial_t g(t)=&\Bigl(\widehat\nabla\widehat{\bm  w}^{f,n}\widehat{\bm v},\frac{\widehat\nabla\widehat{\bm  w}^{f,n}}{\widehat J_f^t}\widehat{\v}\,\Bigr)_{\widehat\Omega_f}-\Bigl(\widehat\nabla\widehat{\bm w}^{f,n}\widehat{\bm v},\frac{\bigl((\widehat{\bm F}_f^{t})^{-\top}\bm{:}\widehat\nabla\widehat{\bm w}^{f,n}\bigr)\widehat{\bm  F}^t_f}{\widehat J_f^t}\widehat{\v}\,\Bigr)_{\widehat\Omega_f}
\\
&\quad -\tfrac12\Bigl(\bigl((\widehat{\bm F}^t_f)^{-\top} (\widehat\nabla \widehat{\bm w}^{f,n})^{\top}(\widehat{\bm F}^t_f)^{-\top}\bigr) {\bm :}\widehat\nabla \widehat{\bm w}^{f,n}
,\frac{1}{\widehat J_f^t} \bigl|\widehat{\bm F}_f^t\widehat{\bm v}\bigr|^2 \Bigr)_{\widehat\Omega_f}
\\
&\quad +\tfrac12\Bigl(\bigl((\widehat{\bm F}_f^{t})^{-\top}\bm{:} \widehat\nabla \widehat{\bm w}^{f,n}\bigr)\frac{1}{\widehat J_f^t}\bigl((\widehat{\bm F}_f^t)^{-\top}\bm{:} \widehat\nabla \widehat{\bm w}^{f,n}\bigr),\bigl|\widehat{\bm F}_f^t\widehat{\bm v}\bigr|^2\Bigr)_{\widehat\Omega_f}
\\
&\quad -\Bigl(\bigl((\widehat{\bm F}_f^{t})^{-\top}\bm{:} \widehat\nabla \widehat{\bm w}^{f,n}\bigr) \frac{\nabla\widehat{\bm w}^{f,n}\widehat\v }{{\widehat J_f^t}}, \widehat{\bm  F}^t_f\widehat{\v}\Bigr)_{\widehat\Omega_f}\\
=&\Bigl(\widehat J_f^n\widehat\nabla\widehat{\bm  w}^{f,n}(\widehat {\bm F}^n_f)^{-1}\frac{\widehat {\bm F}^n_f}{\widehat J_f^n}\widehat{\bm v},\frac{\widehat\nabla\widehat{\bm  w}^{f,n}(\widehat {\bm F}^n_f)^{-1}}{\widehat J_f^t}\frac{\widehat {\bm F}^n_f}{\widehat J_f^n} \widehat J_f^n\widehat{\v}\,\Bigr)_{\Orf}
\\ 
	 &\quad -\Bigl(\widehat J_f^n\widehat\nabla\widehat{\bm w}^{f,n}(\widehat {\bm F}^n_f)^{-1}\frac{\widehat {\bm F}^n_f}{\widehat J_f^n}\widehat{\bm v},\frac{\bigl((\widehat{\bm F}_f^{t}(\widehat {\bm F}^n_f)^{-1})^{-\top}\bm{:}\widehat\nabla\widehat{\bm w}^{f,n}(\widehat {\bm F}^n_f)^{-1}\bigr)\widehat{\bm  F}^t_f(\widehat {\bm F}^n_f)^{-1}}{\widehat J_f^t}\frac{\widehat {\bm F}^n_f}{\widehat J_f^n}\widehat{\v}\widehat J_f^n\,\Bigr)_{\widehat\Omega_f}
	\\
	&\quad -\tfrac12\Bigl(\mathbb{F}_{n}(t)\nabla{\bm w}^{f,n}(\mathbb{F}_{n}(t))^{-\top}\bm {:} \nabla{\bm w}^{f,n}\mathbb{G}_{n}(t)\v^n, \mathbb{F}_{n}(t)\v^n \Bigr)_{\Omega_f^n}\\
&\quad +\tfrac12\Bigl(|(\mathbb{F}_{n}(t))^{-\top}\bm {:} \nabla{\bm w}^{f,n}|^2\mathbb{G}_{n}(t)\v^n, \mathbb{F}_{n}(t)\v^n \Bigr)_{\Omega_f^n}
		\\
  &\quad - \Bigl((\mathbb{F}_{n}(t))^{-\top}: \nabla{\bm w}^{f,n})\nabla{\bm w}^{f,n}\v^n, \mathbb{G}_{n}(t)\v^n\Bigr)_{\Omega_f^n}\\
  =&\Bigl(\mathbb{J}_{n}(t)\nabla\bm w^{f,n}\v^n , \nabla\bm w^{f,n}\v^n \Bigr)_{\Omega_f^n}
	   -\Bigl(\nabla\bm w^{f,n}\v^n, (\mathbb{F}_{n}(t))^{-\top}\bm{:}\nabla\bm w^{f,n}\mathbb{G}_{n}(t)\v^n \Bigr)_{\Omega_f^n}
	\\
	&\quad -\tfrac12\Bigl(\mathbb{F}_{n}(t)\nabla{\bm w}^{f,n}(\mathbb{F}_{n}(t))^{-\top}\bm {:} \nabla{\bm w}^{f,n}\mathbb{G}_{n}(t)\v^n, \mathbb{F}_{n}(t)\v^n \Bigr)_{\Omega_f^n}\\
&\quad +\tfrac12\Bigl(|(\mathbb{F}_{n}(t))^{-\top}\bm {:} \nabla{\bm w}^{f,n}|^2\mathbb{G}_{n}(t)\v^n, \mathbb{F}_{n}(t)\v^n \Bigr)_{\Omega_f^n}
		\\
  &\quad - \Bigl((\mathbb{F}_{n}(t))^{-\top}: \nabla{\bm w}^{f,n})\nabla{\bm w}^{f,n}\v^n, \mathbb{G}_{n}(t)\v^n\Bigr)_{\Omega_f^n},
\end{align*}
which plus the H\"{o}lder's inequality and \eqref{def:G} yields
\begin{align*}
	|\partial_t g(t)|\leq &\sup_{t\in [t_{n-1},t_n]}\Bigl(\|\mathbb{J}_{n}(t)\|_{L^\infty(\Omega_f^n)}+\bigl(2+\tfrac12\|\mathbb{F}_{n}(t)\|^2_{L^\infty(\Omega_f^n)}\bigr) \|(\mathbb{F}_{n}(t))^{-\top}\|_{L^\infty(\Omega_f^n)}\|\mathbb{G}_{n}(t)\|_{L^\infty(\Omega_f^n)} 
	\\ &\ +\tfrac12\|(\mathbb{F}_{n}(t))^{-\top}\|^2_{L^\infty(\Omega_f^n)}\|\mathbb{F}_{n}(t)\|_{L^\infty(\Omega_f^n)}\|\mathbb{G}_{n}(t)\|_{L^\infty(\Omega_f^n)}
	 \Bigr)\|\nabla\bm w^{f,n}\|^2_{L^\infty(\Omega_f^n)}\|\v^n\|^2_{L^2(\Omega_f^n)}\\
\leq & 2G_{2,n}	 \ \|\nabla\bm w^{f,n}\|^2_{L^\infty(\Omega_f^n)}\|\v^n\|^2_{L^2(\Omega_f^n)}.
\end{align*}
This inequality,  together with the relation
\begin{equation}\label{eq:Taylor}
\int_{t_{n-1}}^{t_n} g(t)\,dt
- \tau g(t_n)= - \tfrac{\tau^2}{2}\,\partial_t g(\xi) \quad  \text{ for some }  \xi\in(t_{n-1},t_n)
\end{equation}
due to the Taylor expansion,
implies the desired estimate \eqref{eq::rem_J2}.  This completes the proof.
\end{proof}
\end{mylemma}

For the stability analysis we also  need  the following discrete Gr{\"o}nwall's inequality (cf. \cite{Heywood, Quarteroni}):
\begin{mylemma}\label{Growall}
	  Let $n$ be a positive  integer, and let $\tau $, $\rm B$ and $a_k, b_k, c_k, {\gamma}_k$  $(k=1,\cdots,n)$ be nonnegative numbers such that
	\begin{equation}\label{Gro1}
		a_n+\tau\sum_{k=1}^n b_k\leq \tau\sum_{k=1}^n\gamma_ka_k+\tau\sum_{k=1}^nc_k+ \rm B, \qquad for \ n\geq 1.
	\end{equation}
	Suppose that $\tau\gamma_k <1 $  for all $k$, and set $\delta_k=(1-\tau\gamma_k)^{-1}$. Then 
	\begin{equation}\label{Gro2}
		a_n+\tau\sum_{k=1}^nb_k\leq \rm{exp}\big(\tau\sum_{k=1}^n\gamma_k\delta_k \big)\big(\tau\sum_{k=1}^nc_k+B \big).
	\end{equation}
\end{mylemma}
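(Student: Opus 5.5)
We prove the discrete Gr\"onwall inequality of Lemma~\ref{Growall} by induction, using an auxiliary partial-sum sequence. Set
\begin{align*}
S_n:=\tau\sum_{k=1}^n c_k+{\rm B},\qquad y_n:=a_n+\tau\sum_{k=1}^n b_k .
\end{align*}
Since $b_k,c_k\geq 0$, the sequence $S_n$ is nondecreasing in $n$. Moreover $a_k\le y_k$ for every $k$. Hence \eqref{Gro1} gives, for each $m\le n$,
\begin{align*}
y_m\le \tau\sum_{k=1}^m\gamma_k y_k+S_m\le \tau\sum_{k=1}^m\gamma_k y_k+S_n .
\end{align*}
The task is therefore reduced to a scalar implicit recursion with the fixed right-hand side $S_n$.

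The key step is to isolate the implicit term $\tau\gamma_m y_m$. Define $z_m:=\tau\sum_{k=1}^m\gamma_k y_k+S_n$, with $z_0:=S_n$. Then
\begin{align*}
z_m-z_{m-1}=\tau\gamma_m y_m\le \tau\gamma_m z_m ,
\end{align*}
so $(1-\tau\gamma_m)z_m\le z_{m-1}$. Because $\tau\gamma_m<1$, this gives $z_m\le \delta_m z_{m-1}$.

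Next we relate $\delta_m$ to the exponential. Write $\delta_m=1+\tau\gamma_m\delta_m$ and use $1+x\le e^x$ to obtain $\delta_m\le \exp(\tau\gamma_m\delta_m)$. Iterating from $m=1$ to $n$ yields
\begin{align*}
z_n\le \exp\Big(\tau\sum_{k=1}^n\gamma_k\delta_k\Big)S_n .
\end{align*}
Finally, $y_n\le z_n$ by the first display with $m=n$, and $S_n=\tau\sum_{k=1}^n c_k+{\rm B}$, which is exactly \eqref{Gro2}.

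The only delicate point is the implicit term $\tau\gamma_n a_n$ on the right of \eqref{Gro1}, since $a_n$ also appears on the left. Absorbing it requires the hypothesis $\tau\gamma_k<1$, and this is what produces the factors $\delta_k$ rather than $1$ in the exponent. Everything else is monotonicity of $S_m$ and the elementary bound $1+x\le e^x$. No earlier results of the paper are needed; the lemma is also standard and can be cited from \cite{Heywood, Quarteroni}.
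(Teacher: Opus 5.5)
Your proof is correct and complete. The paper gives no proof of this lemma; it only cites it from the literature, so your argument is a self-contained replacement for that citation rather than an alternative to an argument in the paper.

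The bound $y_m\le\tau\sum_{k=1}^m\gamma_k y_k+S_n$ for every $m\le n$ uses the hypothesis at every index up to $n$. This is the intended reading of ``for $n\ge 1$''. It also uses $a_k\le y_k$, which holds because $b_k\ge 0$, together with $S_m\le S_n$, which holds because $c_k\ge 0$. From there, the one-step estimate $z_m\le\delta_m z_{m-1}$ correctly absorbs the implicit term, and that is exactly where $\tau\gamma_m<1$ is needed.

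Two facts are used silently and are worth stating. The iteration $z_n\le\delta_n z_{n-1}\le\cdots\le\bigl(\prod_{k=1}^n\delta_k\bigr)z_0$ needs $\delta_k>0$. Replacing the product by the exponential needs $z_0=S_n\ge 0$. Both follow from the nonnegativity assumptions.

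Your route also gives a slightly sharper intermediate result:
\[
a_n+\tau\sum_{k=1}^n b_k\le\Bigl(\prod_{k=1}^n\delta_k\Bigr)\Bigl(\tau\sum_{k=1}^n c_k+B\Bigr).
\]
The stated exponential form then follows from $\delta_k=1+\tau\gamma_k\delta_k\le\exp(\tau\gamma_k\delta_k)$.
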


We are now at a position to define the semi-discrete energy. Let $E^{n}$ denote the sum of the semi-discrete kinetic energy of the fluid and the semi-discrete kinetic and   elastic energy of the structure at time $t_n$, i.e.
\begin{align*}
	E^{n}:=\frac{\rho^f}{2}\|\u^{f,n}\|^2_{L^2({\Of^n})}+\frac{\rho^s}{2} \|\widehat \u^{s,n}\|^2_{L^2(\Ors) }+\left (\mu^s\|\D(\widehat{\bm{d}}^{s,n}) \|^2_{L^2(\Ors )}+\frac{\lambda^s }{2}\|\widehat\nabla\cdot {\widehat{\bm{d}}^{s,n} }\|^2_{L^2(\Ors )}\right ),
\end{align*}
and let $\tilde E_f^{n}$ and $\tilde E_{s,d}^{n}$ denote respectively the fluid viscous dissipation and the    structure numerical dissipation at time $t_n$ with
\begin{align*}
	\tilde E^{n}_f:=&\tau\rho^f\mu^f\|\D(\u^{f,n})\|^2_{L^2(\Of^{n})},\\
	\tilde E_{s,d}^{n}:=&\tfrac{1}{2}\bigl(\rho^s\|\widehat \u^{s,n}-\widehat \u^{s,n-1}\|^2_{L^2({\Ors})}+ 2\mu^s\|\D(\widehat{\bm{d}}^{s,n})-\D(\widehat{\bm{d}}^{s,n-1})\|^2_{L^2({\Ors})}
	+\lambda^s\|\widehat{\nabla}\cdot\widehat{\bm{d}}^{s,n}-\widehat{\nabla}\cdot\widehat{\bm{d}}^{s,n-1}\|^2_{L^2({\Ors})}\bigr).
\end{align*}

Theorems \ref{theorem_A} and \ref{theorem_B} show energy estimates of the two semi-discrete schemes.
\begin{mytheorem}[Stability of {Scheme-A}]
\label{theorem_A}
	Let $(\u^{f,n},\widehat \u^{s,n},p^{f,n},\widehat{\bm d}^{s,n}, \widehat{\bm \eta}^{f,n})$ solve the semi-discrete system \eqref{semi_discrete_A} for . If the time step $\tau$ satisfies
	\[
	\tau <\tau_0:=1/\max\limits_{n=1,\cdots,N}\Bigl(\tfrac{3}{2}\|\nabla {\bm{w}^{f,n}}\|_{L^\infty ({\Of^{n}})}+G_{1,n}\|\nabla {\bm{w}_{n-1}^{f,n}}\|_{L^\infty ({\Of^{n-1}})}\Bigr )
	\] 
	then the following energy estimate holds: 
	\begin{align}\label{sta_1}
		&E^N+\sum_{n=1}^{N}\left(\tilde E^{n}_f+\tilde E^{n}_{s,d}+\tfrac{\rho^f}{2}\|\u^{f,n-1}\circ\X^{n-1}_{n}-\u^{f,n}\|^2_{L^2(\Of^{n})}\right)\lesssim \tau\sum_{n=1}^{N}\|\bm g^{f,n}\|^2 _{L^{2}(\Of^{n}) }+E^0.
	\end{align}
	\begin{proof}
		Taking $(\v^f,\widehat\v^s, q^f)=(\tau\u^{f,n},\tau\widehat \u^{s,n},p^{f,n})$ in the first two equations of \eqref{semi_discrete_A},  using the skew-symmetry property of $T^{f,n}(\cdot,\cdot,\cdot)$ and the relation $\tilde{D}_t \widehat{\bm{d}}^{s,n}=\widehat \u^{s,n}$ due to the third equation of \eqref{semi_discrete_A}, and applying  the identity $2a(a-b)=a^2-b^2+(a-b)^2$ for the terms $(\tilde{D}_t\widehat \u^{s,n},\widehat \u^{s,n} )_{\Ors}$ and $B^s( \widehat{\bm{d}}^{s,n}  ,\widehat \v^s) $, we get
		\begin{align}\label{en-sta-relation1}
			&\rho^f \tau ( \tilde{D}_t^M\u^{f,n}, \u^{f,n})_{\Of^{n}}+\tau\rho^f (\nabla{\bm{w}^{f,n}} \u^{f,n}-\tfrac{1}{2} \nabla\cdot{\bm{w}^{f,n}} \u^{f,n}, \u^{f,n})_{\Of^{n}}\nonumber
			\\&+2\tau\rho^f\mu^f\|\bm D(\u^{f,n})\|^2_{L^2(\Of^{n})}+\tfrac{\rho^s}{2}\left(\|\widehat \u^{s,n}\|^2_{L^2({\Ors})}-\|\widehat \u^{s,n-1}\|^2_{L^2({\Ors})}+\|\widehat \u^{s,n}-\widehat \u^{s,n-1}\|^2_{L^2({\Ors})} \right)\nonumber
			\\&+\mu^s(\|\D(\widehat{\bm{d}}^{s,n})\|^2_{L^2({\Ors})}-\|\D(\widehat{\bm{d}}^{s,n-1})\|^2_{L^2({\Ors})}+\|\D(\widehat{\bm{d}}^{s,n})-\D(\widehat{\bm{d}}^{s,n-1})\|^2_{L^2({\Ors})} )\nonumber
			\\&+\tfrac{\lambda^s}{2}(\|\widehat{\nabla}\cdot\widehat{\bm{d}}^{s,n}\|^2_{L^2({\Ors})}-\|\widehat{\nabla}\cdot\widehat{\bm{d}}^{s,n-1}\|^2_{L^2({\Ors})}+\|\widehat{\nabla}\cdot\widehat{\bm{d}}^{s,n}-\widehat{\nabla}\cdot\widehat{\bm{d}}^{s,n-1}\|^2_{L^2({\Ors})} )\nonumber
			\\=&\tau(\bm g^{f,n},  \u^{f,n})_{\Of^{n}} \nonumber\\
			 \leq & \epsilon \tau\|\bm g^{f,n}\|^2 _{L^{2}(\Of^{n}) } + \tfrac{\tau}{4\epsilon}\| \u^{f,n} \|^2_{L^2(\Of^{n}) }, \quad \forall \epsilon>0.
		\end{align}
For the first term of \eqref{en-sta-relation1}, by \eqref{X_de} and the identity $-2ab=-a^2-b^2+(a-b)^2$ we get
		\begin{align}\label{en-sta-relation2}
			 \tau(\tilde{D}_t^M\u^{f,n},\u^{f,n})_{\Of(t_{n})}=& (\u^{f,n}, \u^{f,n})_{\Of^{n}}-\rho^f(\u^{f,n-1}\circ \X_{n}^{n-1}, \u^{f,n})_{\Of^{n}} \nonumber\\
			=&\tfrac{1}{2} \|\u^{f,n}\|^2_{L^2({\Of^{n}})}-\tfrac{1}{2}( |\P^{n}(\widehat\u ^{f,n-1}) |^2,\widehat \J_f^{n}  )_{\Orf}+\tfrac{1}{2} \|\u^{f,n-1}\circ\X^{n-1}_{n}-\u^{f,n}\|^2_{L^2(\Of^{n})}
			\nonumber\\=&\tfrac{1}{2} \Bigl(\|\u^{f,n}\|^2_{L^2({\Of^{n}})}-\|\u^{f,n-1}\|^2_{L^2(\Of^{n-1})}\Bigr)+\tfrac{1}{2} \|\u^{f,n-1}\circ\X^{n-1}_{n}-\u^{f,n}\|^2_{L^2(\Of^{n})}
			\nonumber\\&-\tfrac{1}{2}\left(\|\P^{n}(\widehat\u^{f,n-1})\circ (\A^{n})^{-1}\|^2_{L^2(\Of^{n} )}-\|\u^{f,n-1}\|^2_{L^2(\Of^{n-1})}\right).
		\end{align} 
		For the last term of the above equation,  we apply \eqref{GCL_d-3} and \eqref{eq::rem_J} with 
		$\v= \P^{t}(\widehat\u^{f,n-1})\circ (\A^{t})^{-1}  $ to obtain
		\begin{align}\label{sta1_eq_1}
		\begin{split}
			&\|\P^{n}(\widehat\u^{f,n-1})\circ (\A^{n})^{-1}\|^2_{L^2(\Of^{n} )}-\|\u^{f,n-1}\|^2_{L^2(\Of^{n-1})} \\
			=&\int^{t_{n}}_{t_{n-1}} \left((\nabla {\bm{w}_t^{f,n}} -\tfrac12\nabla\cdot \bm{w}_t^{f,n})\P^{t}(\widehat\u^{f,n-1})\circ (\A^{t})^{-1}  ,  \P^{t}(\widehat\u^{f,n-1})\circ (\A^{t})^{-1}\right)_{\Of(t)} dt
			\\ \leq &\,\tau G_{1, n}\|\nabla {\bm{w}_{n-1}^{f,n}}\|_{L^\infty ({\Of^{n-1}})} \|\u^{f,n-1}\|^2_{L^2({\Of^{n-1}})}.
		\end{split}
		\end{align}
		 Moreover, for the second term of \eqref{en-sta-relation1} we have 
		\begin{align}\label{sta1_eq_2}
			-(\nabla{\bm{w}^{f,n}} \u^{f,n}-\tfrac{1}{2} \nabla\cdot{\bm{w}^{f,n}} \u^{f,n}, \u^{f,n})_{\Of^{n}}\leq \tfrac{3}{2}\|\nabla{\bm{w}^{f,n}}\|_{L^\infty ({\Of^{n}})}\|\u^{f,n}\|^2_{L^2({\Of^{n}})}.
		\end{align}
			Combining \eqref{en-sta-relation1}-\eqref{sta1_eq_2},  summing them from $n=1$ to $N$, using the Poincar\'e inequality and Korn's inequality to get $\|\u^{f,n}\|_{L^2(\Omega_f^{n})}^2 \le C_K \|\bm D(\boldsymbol u^{f,n})\|_{L^2(\Omega_f^{n})}^2$, and taking $\epsilon=\tfrac{4C_k}{\rho^f\mu^f}$, we have
		\begin{align*}
			&\tfrac{1}{2}\left (\rho^f\|\u^{f,N}\|^2_{L^2({\Of^{N}})}+\rho^s\|\widehat \u^{s,N}\|^2_{L^2({\Ors})}+2\mu^s\|\D(\widehat{\bm{d}}^{s,N})\|^2_{L^2({\Ors})}+\lambda^s\|\widehat{\nabla}\cdot\widehat{\bm{d}}^{s,N}\|^2_{L^2({\Ors})}\right )
			\\ &+\tfrac{1}{2}\sum_{n=1}^{N}\Bigr(4\tau\rho^f\mu^f\|\nabla\u^{f,n}\|^2_{L^2(\Of^{n})}+\rho^f\|\u^{f,n-1}\circ\X^{n-1}_{n}-\u^{f,n}\|^2_{L^2(\Of^{n})}+\rho^s\|\widehat \u^{s,n}-\widehat \u^{s,n-1}\|^2_{L^2({\Ors})}
			\\ &+ 2\mu^s\|\D(\widehat{\bm{d}}^{s,n})-\D(\widehat{\bm{d}}^{s,n-1})\|^2_{L^2({\Ors})}+\lambda^s\|\widehat{\nabla}\cdot\widehat{\bm{d}}^{s,n}-\widehat{\nabla}\cdot\widehat{\bm{d}}^{s,n-1}\|^2_{L^2({\Ors})}\Bigr )
			\\ \leq &\, \tau \rho^f\sum_{n=1}^{N}\Bigl( G_{1, n}\|\nabla {\bm{w}_{n-1}^{f,n}}\|_{L^\infty ({\Of^{n-1}})} \|\u^{f,n-1}\|^2_{L^2({\Of^{n-1}})}+ \tfrac{3}{2}\|\nabla {\bm{w}^{f,n}}\|_{L^\infty ({\Of^{n}})}\|\u^{f,n}\|^2_{L^2({\Of^{n}})}\Bigr)
			\\&+\epsilon\tau \sum_{n=1}^{N}\|\bm g^{f,n}\|^2 _{L^{2}(\Of^{n}) } +\tfrac{1}{2}\Bigl (\rho^f\|\u^{f,0}\|^2_{L^2({\Of^{N}})}+\rho^s\|\widehat \u^{s,0}\|^2_{L^2({\Ors})}
			+2\mu^s\|\D(\widehat{\bm{d}}^{s,0})\|^2_{L^2({\Ors})}+\lambda^s\|\widehat{\nabla}\cdot\widehat{\bm{d}}^{s,0}\|^2_{L^2({\Ors})}\Bigr).
		\end{align*}
		Applying the discrete Gr\"{o}nwall's inequality in Lemma \ref{Growall}, we finally obtain 
		\begin{align*}
			&E^N+\sum_{n=1}^{N}\left(\tilde E^{n}_f+\tilde E_{s,d}^{n}+\tfrac{\rho^f}{2}\|\u^{f,n-1}\circ\X^{n-1}_{n}-\u^{f,n}\|^2_{L^2(\Of^{n})}\right)\\
			\leq &\exp\left(\tau\sum_{n=1}^{N}\frac{\gamma_{n,1}}{1-\tau\gamma_{n,1}}\right)\left(\epsilon\tau\sum_{n=1}^{N}\|\bm g^{f,n}\|^2 _{L^{2}(\Of^{n})}+E^0\right), 
		\end{align*}
		where 
		\begin{align*}
			&\gamma_{n,1}:=\tfrac{3}{2}\|\nabla {\bm{w}^{f,n}}\|_{L^\infty ({\Of^{n}})}+G_{1,n}\|\nabla {\bm{w}_{n-1}^{f,n}}\|_{L^\infty ({\Of^{n-1}})}.
		\end{align*}
Since $\tau_0=1/\max\limits_{n=1,\cdots,N}\gamma_{n,1}$,   the desired stability result \eqref{sta_1} follows when $\tau<\tau_0$.
	\end{proof}
\end{mytheorem}

\begin{mytheorem}[Stability of   {Scheme-B}]\label{theorem_B}
	 Let $(\u^{f,n},\widehat \u^{s,n},p^{f,n},\widehat{\bm d}^{s,n}, \widehat{\bm \eta}^{f,n})$ solve the semi-discrete system \eqref{semi_discrete_B} for $n=1,2,\cdots,N$. If the time step $\tau$ satisfies
	\[
	\tau^2<\tau_0':=1/\max\limits_{n=1,\cdots,N}\Bigl(G_{2,n}\|\nabla {\bm{w}^{f,n}}\|^2_{L^\infty ({\Of^{n}})}\Bigr ),
	\]
	then  the following  energy estimate holds: 
	\begin{align}\label{sta_2}
		&E^N+\sum_{n=1}^{N}\left(\tilde E^{n}_f+\tilde E_{s,d}^{n}+\tfrac{\rho^f}{2}\|\u^{f,n-1}-\u^{f,n}\circ \X_{n-1}^{n}\|^2_{L^2({\Of^{n-1}})}\right)\lesssim \tau\sum_{n=1}^{N}\|\bm g^{f,n}\|^2 _{L^{2}(\Of^{n}) }+E^0.
	\end{align} 
\begin{proof}
	The proof is similar to that of Theorem \ref{theorem_A}, and  the main difference   lies in the treatment of the ALE time derivative term. Using \eqref{X_de} and identity $-2ab=-a^2-b^2+(a-b)^2$, we have
		\begin{align}\label{themB-pr1}
			&(\u^{f,n}, \u^{f,n})_{\Of^{n}}-(\u^{f,n-1}, \u^{f,n}\circ \X_{n-1}^{n} )_{\Of^{n-1}} \nonumber \\
			=&\|\u^{f,n}\|^2_{L^2(\Of^{n})}-\tfrac{1}{2}\left (\|\u^{f,n-1}\|^2_{L^2(\Of^{n-1})}+( |\P^{n-1}(\widehat\u ^{f,n}) |^2,\widehat \J^{n-1}  )_{\Orf}\right)+\tfrac{1}{2}\|\u^{f,n-1}-\u^{f,n}\circ \X_{n-1}^{n}\|^2_{L^2({\Of^{n-1}})}  \nonumber  \\
			=&\tfrac{1}{2}\left(\|\u^{f,n}\|^2_{L^2(\Of^{n})}-\|\u^{f,n-1}\|^2_{L^2(\Of^{n-1})}\right )+\tfrac{1}{2}\left(\|\u^{f,n}\|^2_{L^2(\Of^{n})}-\|\P^{n-1}(\widehat\u ^{f,n})\circ (\A^{n-1})^{-1}\|^2_{L^2(\Of^{n-1})}\right)
			 \nonumber 
			 \\&+\tfrac{1}{2}\|\u^{f,n-1}-\u^{f,n}\circ \X_{n-1}^{n}\|^2_{L^2({\Of^{n-1}})}. 
		\end{align}
      For the second term on the right hand side of the last ``=", we apply  \eqref{GCL_d-3}  to get
		\begin{align*}
			&\tfrac{1}{2}\left(\|\u^{f,n}\|^2_{L^2(\Of^{n})}-\|\P^{n-1}(\widehat\u ^{f,n})\circ (\A^{n-1})^{-1}\|^2_{L^2(\Of^{n-1})}\right)
            \\=&\int^{t_{n}}_{t_{n-1}} \left((\nabla {\bm{w}_t^{f,n}} -\tfrac12\nabla\cdot \bm{w}_t^{f,n})\P^{t}(\widehat\u^{f,n})\circ (\A^{t})^{-1}  ,  \P^{t}(\widehat\u^{f,n})\circ (\A^{t})^{-1}\right)_{\Of(t)} dt,
		\end{align*}
		which plus  \eqref{eq::rem_J2} gives
		\begin{align}\label{themB-pr2}
			&\tfrac{1}{2}\left(\|\u^{f,n}\|^2_{L^2(\Of^{n})}-\|\P^{n-1}(\widehat\u ^{f,n})\circ (\A^{n-1})^{-1}\|^2_{L^2(\Of^{n-1})}\right)-\tau\Bigl((\nabla{\bm{w}^{f,n}} )^{\top}\u^{f,n}-\tfrac{1}{2} \nabla\cdot {\bm{w}^{f,n}} \u^{f,n}, \u^{f,n}\Bigr)_{\Of^{n}}
			 \nonumber 
			 \\ \leq &\,  \tau^2\,G_{2,n}\|\nabla {\bm{w}^{f,n}}\|^2_{L^\infty ({\Of^{n}})} \|\u^{f,n}\|^2_{L^2(\Of^{n})}.
		\end{align}
				 
		 Taking $(\v^f,\widehat\v^s, q^f)=(\tau\u^{f,n},\tau\widehat \u^{s,n},p^{f,n})$ in \eqref{semi_discrete_B} , using the skew-symmetric property of $T^{f,n}(\cdot,\cdot,\cdot)$, adding the resulting  equations,  summing them from $n=1$ to $N$, and applying  \eqref{themB-pr1}, \eqref{themB-pr2}, and the discrete Gr\"{o}nwall’s inequality in Lemma \ref{Growall}, we finally obtain
		 \begin{align*}
		 	&E^N+\sum_{n=1}^{N}\left(\tilde E^{n}_f+\tilde E_{s,d}^{n}+\tfrac{\rho^f}{2}\|\u^{f,n-1}-\u^{f,n}\circ \X_{n-1}^{n}\|^2_{L^2({\Of^{n-1}})}\right)\\
		 	\leq &\exp\left(\tau\sum_{n=1}^{N}\frac{\tau \gamma_{n,2}}{1-\tau^2\gamma_{n,2}}\right)\left(\epsilon\tau\sum_{n=1}^{N}\|\bm g^{f,n}\|^2 _{L^{2}(\Of^{n})}+E^0\right), 
		\end{align*}
		where $\gamma_{n,2}:= \,G_{2,n}\|\nabla {\bm{w}^{f,n}}\|^2_{L^\infty ({\Of^{n}})}$.	Since $\tau_0'=1/\max\limits_{n=1,\cdots,N}\gamma_{n,2}$, the desired stability result \eqref{sta_2} follows when $\tau^2<\tau_0'$.
		\end{proof}
\end{mytheorem}

\begin{myremark}
Notice that the  admissible time-step sizes for Scheme-A and Scheme-B are respectively 
$$\tau_0=1/\max\limits_{n=1,\cdots,N}\Bigl(\tfrac{3}{2}\|\nabla {\bm{w}^{f,n}}\|_{L^\infty ({\Of^{n}})}+G_{1,n}\|\nabla {\bm{w}_{n-1}^{f,n}}\|_{L^\infty ({\Of^{n-1}})}\Bigr )
$$ and 
$$\tau_0'=1/\max\limits_{n=1,\cdots,N}\Bigl(G_{2,n}\|\nabla {\bm{w}^{f,n}}\|^2_{L^\infty ({\Of^{n}})}\Bigr ),$$
 which depend only on the temporally semi-discrete ALE mapping $\A^t$  defined in \eqref{semi-discrete ALE}.  More precisely, the fluid domain velocities  ${\bm w}^{f,n}$ 
and ${\bm w}^{f,n}_{n-1}$ are, due to \eqref{semi-fluid-velocity},  of the forms 
\begin{align*}
{\bm w}^{f,n}(\x)\circ (\A^t)= \widehat{\bm w}^{f,t}(\widehat \x)=\left.{\partial_t \A^t(\widehat \x,t)}\right|_{t=t_n}   
\quad \text{and}\quad {\bm w}^{f,n}_{n-1}(\x)=\widehat{\bm w}^{f,n}\circ (\A^t)^{-1}(  \x,t_{n-1}) 
\end{align*}
respectively, and the numbers $G_{1,n}$ and $G_{2,n}$, defined in \eqref{def:G},  depend  only on the fluid deformation gradient $\widehat{\bm F}_f^t=\widehat \nabla\A^t$.  Roughly speaking, the smaller $\max\limits_{n=1,\cdots,N}\|\nabla \bm{w}^{f,n}\|_{L^\infty ({\Of^n})} $ is, the   larger the  admissible time steps   will  be.  In particular, if $\nabla \bm{w}^{f,n}=0$,  corresponding to a fixed fluid domain or a rigid translation with a constant velocity, then the time-step sizes can be chosen without restriction.  
	
In addition, we  note that  the stability condition $\tau^2< \tau_0'$ (not $\tau< \tau_0'$) for Scheme-B  is much milder than the condition $\tau< \tau_0$ for  Scheme-A. 
\end{myremark}

\section{Full discretization}\label{Sec_Full}
In this section we will construct two fully discrete schemes based on the two temporally semi-discrete schemes in Section \ref{semi-discrete schemes}, and show the corresponding energy stability results. 

\subsection{Domain triangulation and fully discrete ALE mapping}\label{sec_ALE_fully}
We consider a family of shape regular, 
conforming simplicial triangulations $\{\widehat{\mathcal T}_h\}_{h>0}$ of the reference configuration $\widehat{\Omega}$, consisting of closed triangles for $d=2$ and of closed tetrahedra for $d=3$, which are interface-fitted in the sense that the interface is resolved by element edges (faces in 3D). More precisely, $\widehat{\mathcal T}_h$ is assumed to be a matching mesh with respect to the  $\widehat{\Omega}=\widehat{\Omega}_f\cup\widehat{\Omega}_s\cup\widehat{\Gamma}$ in the sense that for every simplicial element  $\widehat K\in\widehat{\mathcal T}_h$ it holds 
$$
\widehat K\subset \widehat{\Omega}_s \quad \text{or} \quad \widehat K\subset \widehat{\Omega}_f .
$$
Accordingly, we define the sub-triangulations
\begin{align}\label{sub-triangulation}
&\widehat{\mathcal T}_{f,h}:=\{\widehat K\in\widehat{\mathcal T}_h:\ \widehat K\subset\widehat{\Omega}_f\},
\qquad
\widehat{\mathcal T}_{s,h}:=\{\widehat K\in\widehat{\mathcal T}_h:\ \widehat K\subset\widehat{\Omega}_s\}.
\end{align}
Notice that $\widehat{\mathcal T}_h=\widehat{\mathcal T}_{f,h}\cup\widehat{\mathcal T}_{s,h}.$

Let $\widehat h_{\widehat K}$ be the diameter of $\widehat K$ and set
$\widehat h:=\max_{\widehat K\in\widehat{\mathcal T}_h}\widehat h_{\widehat K}$.
Denote by $\widehat{\mathcal E}_h$ the set of all edges/faces  of the mesh skeleton,
and by $\widehat{\mathcal E}_{f,h}$ and $\widehat{\mathcal E}_{s,h}$ the corresponding subsets associated
with $\widehat{\mathcal T}_{f,h}$ and $\widehat{\mathcal T}_{s,h}$, respectively. Since the triangulation is matching, the discrete interface $\widehat{\Gamma}_h$ can be identified with a subset of
mesh edges/faces, namely
\[
\widehat{\Gamma}_h:=\bigcup_{\widehat e\in\widehat{\mathcal E}_{\Gamma,h}}
{\widehat e}
\]
with 
$$\widehat{\mathcal E}_{\Gamma,h}:=\bigl\{\widehat e\in\widehat{\mathcal E}_h:\ 
\widehat e\subset\partial\widehat K_f\cap\partial\widehat K_s
\text{ for some }\widehat K_f\in\widehat{\mathcal T}_{f,h} \text{ and some } \widehat K_s\in\widehat{\mathcal T}_{s,h}\bigr\}.$$
The discrete fluid and structure subdomains induced by the triangulation are defined as
\[
\widehat{\Omega}_{f,h}
:=
\bigcup_{\widehat K\in\widehat{\mathcal T}_{f,h}}
\widehat K,
\qquad
\widehat{\Omega}_{s,h}
:=
\bigcup_{\widehat K\in\widehat{\mathcal T}_{s,h}}
\widehat K.
\]
For $k\ge 0$, let $P_k(\widehat K)$ and $P_k(\widehat e)$ denote the spaces of polynomials of
total degree at most $k$ on   element $\widehat K$ and on  edge/face $\widehat e$, respectively.

To give the spatially discrete ALE mapping, we use the conforming $P_k$-element to approximate the displacement $\widehat \et^f$ of the fluid mesh and introduce the following continuous finite dimensional spaces: 
\begin{align}\label{disALE-displacement}
\left\{\begin{aligned}
	\widehat \V_{h,k}^{\mathcal M}:=&\{\widehat{\bm d}_h^{m}\in \bm C^0(\overline{ {\widehat \Omega}_{f, h}}):  \widehat{\bm d}_h^{m}\mid_{\widehat K}\in [{P_k(\widehat K)}]^{d}, \forall {\widehat K}\in \widehat{\mathcal T}_{f,h} \text{\ and\ }   \widehat{{\bm d}}^{m}_h\mid_{\widehat\Gamma_{f,h}}=\bm 0 \},	\\
	{\widehat \V}_{0,h,k}^{\mathcal M}:=&\{\widehat{{\bm d}}_h^{m}\in \bm C^0(\overline{ {\widehat \Omega}_{f, h}}): \widehat{{\bm d}}_h^{m}\mid_{\widehat K}\in [{P_k(\widehat K)}]^{d}, \forall \widehat K\in \widehat{\mathcal T}_{f,h} \text{\ and\ }  \widehat{{\bm d}}_h^{m}\mid_{\partial\widehat\Omega_{f,h}}=\bm 0\},
\end{aligned}
\right.
\end{align}
where $\widehat\Gamma_{f,h}:=\partial\widehat\Omega_{f,h}\backslash \widehat{\Gamma}_h$. 

Similar to the semi-discrete version  \eqref{semi-discrete ALE-n}, the fully discrete ALE mapping $\A_h^n$ for the fluid domain  at time $t_n$ is constructed from the discrete fluid mesh displacement $\widehat{\bm \eta}^{f,n}_h\in \widehat \V_{h,k}^{\mathcal M}$ with
\begin{align}\label{full-discrete-ALE-n}
\A_h^n(\widehat{\x})=\widehat{\x}+\widehat{\bm \eta}^{f,n}_h(\widehat{\x}):\widehat{\Omega}_{f,h}\rightarrow \Omega_{f,h}^n:=\A_h^n(\widehat{\Omega}_{f,h}), \quad n=1,\dots,N.
\end{align}
Then  
 we introduce  for any $t\in[0,T]$  the fully discrete ALE mapping  
\[
\A_h^t(\widehat{\x},t):
\widehat{\Omega}_{f,h}\rightarrow 
\Omega_{f,h}^t:=\A_h^t(\widehat{\Omega}_{f,h})
\]
with
\begin{align}\label{full-discrete-ALE}
\x=\A_h^t(\widehat{\x},t)
:=\frac{t-t_{n-1}}{\tau}\A_h^{n}(\widehat{\x})
+\frac{t_{n}-t}{\tau}\A_h^{n-1}(\widehat{\x}),
\quad t\in[t_{n-1}, t_{n}],
\end{align}
and define the corresponding fluid deformation
gradient  and its Jacobian determinant as follows:
\begin{align*}
		\widehat{\bm F}_{f,h}^t:=\widehat \nabla\A_h^t, \quad \widehat{\J}_{f,h}^t:=\det(\widehat{\bm {F}}_{f,h}^t).
\end{align*}
For simplicity, we write
 \begin{align*}
		\widehat{\bm F}_{f,h}^n:=\widehat{\bm F}_{f,h}^t|_{t=t_n} =\widehat \nabla\A_h^n, \quad \widehat{\J}_{f,h}^n:=\widehat{\J}_{f,h}^t|_{t=t_n}=\det(\widehat{\bm {F}}_{f,h}^n).
\end{align*}
Similar to \eqref{semi-fluid-velocity},   the fully discrete fluid domain velocities 
$\bm w_h^{f,n}$ on $\Omega_{f,h}^n$ and 
$\bm w_{t,h}^{f,n}$ on $\Omega_{f,h}^t$ are given by
\begin{align}\label{full-fluid-velocity}
\left\{
\begin{array}{l}
\bm w_h^{f,n}(\x)=\bm w_h^{f,n}\circ\A_h^n(\widehat{\x}) =\widehat{\bm w}_h^{f,n}(\widehat{\x}):=\left.\partial_t \A_h^t(\widehat{\x},t)\right|_{t=t_n} =\dfrac1\tau\bigl(\A_h^{n}(\widehat{\x})-\A_h^{n-1}(\widehat{\x})\bigr), \\
\bm w_{t,h}^{f,n}(\x,t):=\widehat{\bm w}_h^{f,n}\circ (\A_h^t)^{-1}(\x,t),\quad t\in[t_{n-1},t_{n}].
\end{array}
\right.
\end{align}
For brevity we set
\begin{align}\label{w-nj-full}
{\bm w}_{j,h}^{f,n}:= {\bm w}^{f,n}_{t,h}\big|_{t=t_j},\quad j=n,n-1,
\end{align}
and we have ${\bm w}_{n,h}^{f,n}={\bm w}_h^{f,n}.$

With the sub-triangulation  $\widehat{\mathcal{T}}_{f,h}$ in \eqref{sub-triangulation} and
the fully discrete ALE mapping $\A_h^n$ in \eqref{full-discrete-ALE-n},  we define the  deformed triangulation of the fluid domain at $t=t_n$ as
\[
\mathcal{T}_{f,h}^n:=\{K=\A_h^n(\widehat K): \;  \widehat K\in \widehat{\mathcal{T}}_{f,h}\}, \quad n=1,\dots,N
\]
and set ${\Omega}_{f, h}^n:=\bigcup_{K\in{\mathcal T}^n_{f,h}}K$. Here we 
  assume that  $\mathcal{T}_{f,h}^n$ remains regular so that no elements possess a degenerate or negative Jacobian determinant. 
Note that 
 ${\Omega}_{f, h}^n$ 
 can be viewed as an approximation of ${\Omega}_{f}^n$ given by \eqref{semi-discrete ALE-n}. 
 We also denote by 
 $$\mathcal E_{f,h}^n:=\{e_f^n = \A_e^n(\widehat{e}_f) : \ \widehat{e}_f 
 \in \widehat{\mathcal E}_{f,h}\}$$ 
 the mesh skeleton of $\Omega^n_{f,h}$, where    the mapping $\A_e^n : \widehat{\mathcal E}_{f,h} \to \mathcal E_{f,h}^n$ is defined by 
 \begin{align}\label{edge-ALE}  
& \A_e^n(\widehat{e}_f):=\A_h^n(\widehat K)|_{\widehat{e}_f}\ \text{ for }\widehat{e}_f\subset \partial\widehat  K , \widehat K\in \widehat{\mathcal{T}}_{f,h}.
 \end{align}

Let $\widehat {\bm{n}}_l $ denote the unit normal vector  along the reference boundary facet $\widehat e_l$, and let $ {\bm{n}}^n_{K,l} $ denote the unit normal vector along the physical boundary facet $e^n_{K,l}$ of the deformed element $K$ at time $t_n$ for $n=1,\dots,N$. We easily have
\begin{align*}
	 {\bm{n}}^n_{K,l} \circ (\A_{h}^n)^{-1} = \frac{(\widehat {\bm F}^n_{h,K})^{-\top} \widehat {\bm{n}}_l}{|(\widehat {\bm F}^n_{h,K})^{-\top} \widehat {\bm{n}}_l|}, 
\end{align*}
where $|\cdot|$ represents the  vector magnitude,  $\widehat{\bm F}^n_{h,K}:=\widehat\nabla\A_h^n|_{\widehat K}$ is the deformation gradient associated with deformed element $K$, and $\widehat J^n_{h,K} = \det(\widehat{\bm F}^n_{h,K})$ is the Jacobian determinant.

For any time $t=t_n$ and any integer $k\geq 1$, we introduce the following discontinuous finite element spaces on the fluid mesh ${\Omega}_{f, h}^n$:
\begin{align}\label{full-fluid-spaces}
\left\{\begin{aligned}
	 	 \V^{f,n}_{h,k}:=&\{\v^{f,n}_h\in \bm L^2( {\Omega_{f,h}^n } ): \v^{f,n}_h\mid_{K}=\tfrac{1}{\widehat J^n_{h,K}}\widehat{\bm F}^n_{h,K}\widehat\v_h^f\circ (\A_{h}^n)^{-1},  \widehat\v_h^f \in {P_{k}(\widehat K)}, K=\A_h^n(\widehat K),\forall  \widehat K\in \widehat {\mathcal T}_{f,h}\},\\
	 \underline{\V}^{f,n}_{h,k} :=&\{\underline{\v}^{f,n}_h\in \bm L^2({\mathcal E}_{f,h}^n): \underline { \v}^{f,n}_h\mid_{e}= \underline {\widehat \v}_h\circ (\A^n_{ e})^{-1},   \underline {\widehat \v}_h \in {P_k(\widehat e)}, e = \A_e^n(\widehat{e}), \forall \widehat e\in \widehat {\mathcal E}_{f,h}, \widehat \v_h\mid_{\widehat\Gamma_{f,h}}=\bm 0 \},\\Q^{f,n}_{h,k-1}:=&\{ q^{f,n}_h\in L^2( {\Omega_{f,h}^n } ): q^{f,n}_h\mid_{K}=\widehat q_h\circ (\A_{h}^n)^{-1}, \widehat q_h \in {P_{k-1}(\widehat K)}, K=\A_h^n(\widehat K),\forall \widehat K\in \widehat {\mathcal T}_{f,h} \}, \\
	\underline{ Q}^{f,n}_{h,k}:=&\{ {\underline q}^{f,n}_h \in  L^2( {\mathcal E }_{f,h}^n): \underline { q}_h^{f,n}\mid_{e}=\underline {\widehat q}_h\circ (\A^n_{ e})^{-1},   \underline {\widehat q}_h \in {P_k(\widehat e)}, e = \A_e^n(\widehat{e}), \forall \widehat e\in \widehat {\mathcal E}_{f,h}\},\\
	 \Bs^{f,n}_{h,k-1}:=&\{ \Bl_h^{f,n}\in [Q^{f,n}_{h,k-1} ]^{d\times d }:  \Bl^{f,n}_h\text{\ is\ symmetric} \}.
\end{aligned}
\right.
\end{align}
We also introduce the following discontinuous finite element spaces on the structure mesh ${\widehat \Omega}_{s, h}$:
\begin{align}\label{full-solid-spaces}
\left\{\begin{aligned}
	\widehat \Bs^{s}_{h,k-1}:=&\{\widehat \Bl_h^s\in [L^2(\widehat {\Omega }_{s,h} )]^{d\times d }: \widehat \Bl^s_h \text{\ is\ symmetric}, \  \widehat \Bl^s_h\mid_{\widehat K}\in [{P_{k-1}(\widehat K)}]^{d\times d}, \forall \widehat K\in \widehat {\mathcal T}_{s,h}  \}, \\
	\widehat \V^{s}_{h,k}:=&\{\widehat\v_h^s\in \bm L^2(\widehat {\Omega }_{s,h} ): \widehat \v^s_h\mid_{\widehat K}\in [{P_k(\widehat K)}]^{d}, \forall \widehat K\in\widehat {\mathcal T}_{s,h}  \},\\
	\underline{\widehat \V}^{s}_{h,k}:=&\{\underline{\widehat\v}^s_h\in \bm L^2(\widehat {\mathcal E}_{s,h}): \widehat \v^s_h\mid_{\widehat e}\in [{P_k(\widehat e)}]^{d}, \forall \widehat e\in\widehat {\mathcal E}_{s,h} \text{\ and\ }  \widehat \v_h^s\mid_{\widehat\Gamma_{s,h}}=\bm 0 \}.
\end{aligned}
\right.
\end{align}

\begin{myremark}
The vector  spaces, $ \V^{f,n}_{h,k}$, $\underline{\V}^{f,n}_{h,k} $,  are used to approximate the fluid velocity $\u^f$ and its trace $ {\underline \u^f}$on element facets, respectively.  Specially,   $\V^{f,n}_{h,k}$ is obtained from the reference configuration through a Piola-type ALE transformation. This choice ensures strong mass-conservation for the proposed HDG schemes, as shown in Theorem \ref{Div-free}.
 The scalar   spaces, $Q^{f,n}_{h,k-1}$ and $\underline{ Q}^{f,n}_{h,k}$, and the  symmetric tensor  space $ \Bs^{f,n}_{h,k-1}$ are employed  to approximate the pressure $p^f$, the pressure trace $\underline p^f$ on element facets, and the stress tensor $\Bl^f=2\rho^f\mu^f\bm{D}(\u^f)$, respectively. 

The vector  spaces $\widehat \V^{s}_{h,k}$ and $\underline{\widehat \V}^{s}_{h,k}$ are utilized to approximate the structure displacement  /   velocity, $ \ds$ / $\widehat\u^s$,  and their traces $ {\underline \ds}$ / $\underline{ \widehat\u^s}$ on element facets, respectively. 
The  symmetric tensor space  $\widehat \Bs^{s}_{h,m}$ is applied  to approximate the stress tensor $ \widehat \Bl^s=2\mu^s\bm{D}(\widehat \u^s)$. 
\end{myremark}

For any triangulation 
$\mathcal T_h \in 
\{\widehat{\mathcal T}_{h}, 
  \widehat{\mathcal T}_{f,h}, 
  \widehat{\mathcal T}_{s,h},
  \mathcal T_{f,h}^n\}$,
we define the following mesh-dependent inner products and norms:
\[
(\u,\v)_{\mathcal T_h} :=\sum_{K\in \mathcal T_h} (\u,\v)_K,
\qquad
\|\u\|^2_{L^2(\mathcal T_h)} := \sum_{K\in \mathcal T_h} \|\u\|^2_{L^2(K)},
\]
\[
\langle \underline\u,\underline\v\rangle_{\partial\mathcal T_h} := \sum_{K\in \mathcal T_h} \langle \underline\u,\underline\v\rangle_{\partial K},
\qquad
\|\underline\u\|^2_{L^2(\partial\mathcal T_h)} := \sum_{K\in \mathcal T_h} \|\underline\u\|^2_{L^2(\partial K)}.
\]
Here $\u, \v, \underline\u, \underline\v$ may be scalar / vector / tensor valued functions, and  $(\cdot,\cdot)_K$ and $\langle\cdot,\cdot\rangle_{\partial K}$ denote the standard $L^2$ inner products on $K$  and $\partial K$, respectively.

Before presenting   the HDG fully  discrete schemes, we need to  introduce, for any
\begin{align*}
	&    \Bl^{f,n}_h, \ \Bm^{f,n}_h \in \Bs^{f,n}_{h,k-1},  \\
	&  \widehat\Bl^{s,n}_h, \widehat\Bm^{s}_h\in \widehat \Bs^{s}_{h,k-1},\\
	&\mathcal U^{f,n}:=(\u_h^{f,n},\underline{\u}_h^{f,n}),\ \mathcal V^{f,n}:=(\v_h^{f,n},\underline{\v}_h^{f,n}) \ \in  \V^{f,n}_{h,k} \times \underline{ \V}^{f,n}_{h,k}, \\	
	& \widetilde P^{f,n}:=(p_h^{f,n},\underline{p}_h^{f,n}) 	 \in  Q^{f,n}_{h,k-1}\times \underline{ Q}^{f,n}_{h,k},
	\\
&	 \widehat{\mathcal U}^{s,n}:=(\widehat \u_h^{s,n}, \underline {\widehat \u}_h^{s,n} ),   \  \widehat{\mathcal V}^{s}:=(\widehat \v_h^{s}, \underline {\widehat \v}_h^{s} ),  \ \widehat{\mathcal D}^{s,n}:=(\widehat{\bm d}_h^{s,n}, \underline{\widehat{\bm d}}_h^{s,n} ) \in \widehat \V^{s}_{h,k}\times \underline{\widehat \V}^{s}_{h,k},
\end{align*}
the following discrete bilinear and trilinear forms: the following discrete bilinear and trilinear forms: 
\begin{align*}
	&A_h^{f,n}(\Bl^{f,n}_h, \Bm^{f,n}_h ):=(\Bl^{f,n}_h, \Bm^{f,n}_h )_{{\mathcal T}^{n}_{f,h}},   \\
	&  A_h^{s}(\widehat\Bl^{s,n}_h, \widehat\Bm^{s}_h):=(\widehat\Bl^{s,n}_h, \widehat\Bm^{s}_h)_{\widehat {\mathcal T}_{s,h}}, \\
	& C_{h,1}^{f,n}(\mathcal U^{f,n}, \Bm^{f}_h):=(\u_h^{f,n},\nabla\cdot \Bm^{f}_h)_{{\mathcal T}^{n}_{f,h}}-\langle \underline{\u}_h^{f,n}, \Bm^{f}_h{\bm n}^f \rangle_{\partial {\mathcal T}^{n}_{f,h}},\\
	& C_{h,2}^{f,n}(\mathcal U^{f,n}, \widetilde P^{f,n}):=(\nabla\cdot \u_h^{f,n},p^{f,n}_h)_{{\mathcal T}^{n}_{f,h}}-\langle ({\u}_h^{f,n}-\underline{\u}_h^{f,n}) \cdot {\bm{n}^f} , \underline p_h^{f,n} \rangle_{\partial {\mathcal T}^{n}_{f,h}},\\
	& S_{h}^{f,n}(\mathcal U^{f,n},  \mathcal V^{f,n}):=\rho^f \mu^f \langle s_1({\u}_h^{f,n}-\underline{{\u}}_h^{f,n}), ({\v }_h^{f,n}-\underline{{\v }}_h^{f,n}) \rangle_{\partial {\mathcal T}^{n}_{f,h}},\\
	& C_{h}^{s}(\widehat{ \mathcal U}^{s,n}, \widehat \Bm^{s}_h):=(\widehat \u_h^{s,n},\widehat \nabla\cdot \widehat{ \Bm}^{s}_h)_{\widehat {\mathcal T}_{s,h}}+\langle  \underline{\widehat \u}_h^{s,n}, \widehat \Bm^{s}_h{\bm n}^s \rangle_{\partial \widehat {\mathcal T}_{s,h} },\\
	& S_{h}^{s}( \widehat{\mathcal D}^{s,n},  \widehat{ \mathcal V}^{s}):= \mu^s\langle s_2(\widehat {\bm d }_h^{s,n}-\underline{\widehat{\bm d}}_h^{s,n} ), (\widehat{\v}_h^{s}-\underline{\widehat{\v }}_h^{s}) \rangle_{\partial \widehat {\mathcal T}_{s,h}},\\
	&T^{f,n}_h({\bm w}^{f,n}_h , \mathcal U^{f,n},\mathcal V^{f,n}):=\tfrac{1}{2}\left (\v_h^{f,n}\cdot(\nabla \u_h^{f,n}) -\u_h^{f,n}\cdot(\nabla \v_h^{f,n}),\u_h^{f,n} - {\bm w}^{f,n}_h \right)_{{\mathcal T}^{n}_{f,h}}\\
	&\hskip5cm \ \  -\tfrac{1}{2}\langle (\underline \u_h^{f,n} -  {\bm w}^{f,n}_h)\cdot {\bm n}^f, \underline \v_h^{f,n}\cdot \u_h^{f,n}-\underline \u_h^{f,n}\cdot \v_h^{f,n}  \rangle_{\partial {\mathcal T}^{n}_{f,h}},
\end{align*}
where the stabilization parameters $s_1$ and $s_2$ in the stabilization terms $S_{h}^{f,n}(\cdot,\cdot) $ and $S_{h}^{s}(\cdot,\cdot)$ are taken as 
$$s_1|_{\widehat e }=1/h_{\widehat e}, \quad \forall \widehat e  \in \widehat {\mathcal E}_{f,h} \quad \text{and}\quad s_2|_{\widehat e  }=1/h_{\widehat e},  \quad \forall \widehat e  \in \widehat {\mathcal E}_{s,h},$$ respectively, with $h_{\widehat e}$ denoting the diameter of face $\widehat e$.

Let $\Pi_{\V^f}$ and $\Pi_{\V^s}$ denote respectively the projection operators from $\bm H^1(\Omega^{f,n})$ onto $\V^{f,n}_{h,k}$ and from $\bm H^1(\widehat \Omega^{s})$ onto $ \widehat \V^{s}_{h,k}$, and give the initial data 
$(\u_h^{f,0},\widehat \u_h^{s,0},\widehat{\bm d}_h^{s,0})=(\Pi_{\V^f}\u_0^f, \Pi_{\V^s}\widehat \u^{s}_0,\Pi_{\V^s}\widehat{\bm d}^{s}_0).$ Then, based on the temporally semi-discrete schemes, Scheme- A and Scheme-B, we propose the following  two fully discrete HDG schemes:
\begin{itemize}
	\item \textbf{Scheme-C}:

Find $(\Bl^{f,n}_h, \mathcal U^{f,n},\widetilde P^{f,n}, \widehat{\bm \eta}_h^{f,n})  \in \Bs^{f,n}_{h,k-1}\times (\V^{f,n}_{h,k} \times \underline{ \V}^{f,n}_{h,k})\times  (Q^{f,n}_{h, k-1}\times \underline{ Q}^{f,n}_{h,k})\times \widehat \V_{h,k}^{\mathcal M}$ 
and $(\widehat\Bl^{s,n}_h,\widehat{\mathcal U}^{s,n},\widehat{\mathcal D}^{s,n} )\in \widehat \Bs^{s}_{h,k-1}\times (\widehat \V^{s}_{h,k}\times \underline{\widehat \V}^{s}_{h,k})\times  (\widehat \V^{s}_{h,k}\times \underline{\widehat \V}^{s}_{h,k})$ for $n=1,2,\cdots,N$, 
with $\widehat{\bm \eta}_h^{f,n}=\underline{ \widehat{\bm d}}_h^{s,n}$ on $\widehat\Gamma_h$, $\underline{\u}_h^{f,n}=\underline{\widehat \u}_h^{s,n}\circ (\A^{n}_e)^{-1}$ on $\Gamma_h^{n}$, and $\bm w_h^{f,n}=\tilde{D}_t\widehat {\bm \eta}_h^{f,n}  \circ (\A_h^{n})^{-1}$, such that
\begin{align}\label{full_discrete_C}
\left\{
\begin{aligned}
A_h^{f,n}(\Bl^{f,n}_h, \Bm^{f,n}_h )+2 \rho^f \mu^f C_{h,1}^{f,n}(\mathcal U^{f,n}, \Bm^{f,n}_h)&=0,\\
A_h^{s}(\widehat\Bl^{s,n}_h, \widehat\Bm^{s}_h)+2\mu^s C_{h}^{s}(\mathcal D^{s,n}, \widehat \Bm^{s}_h)&=0,\\
\rho^f(\tilde{D}_t^M\u_h^{f,n}, \v_h^{f,n})_{{\mathcal T}_{f,h}^{n}}+\rho^f T^{f,n}_h(\bm{w}_h^{f,n}, \mathcal U^{f,n},\mathcal V^{f,n})
\\+\rho^f(\nabla{\bm{w}_h^{f,n}} \u_h^{f,n}-\tfrac{1}{2} \nabla\cdot{\bm{w}_h^{f,n}} \u_h^{f,n}, \v_h^{f,n})_{{\mathcal T}_{f,h}^{n}}-C_{h,1}^{f,n}(\mathcal V^{f,n}, \Bl^{f,n}_h)
\\-C_{h,2}^{f,n}(\mathcal V^{f,n}, \widetilde P^{f,n})+S_{h}^{f,n}(\mathcal U^{f,n},  \mathcal V^{f,n})+\rho^s(\tilde{D}_t\widehat \u_h^{s,n},\widehat \v_h^s )_{\widehat {\mathcal T}_{s,h}}-C_{h}^{s}(\widehat{ \mathcal V}^{s}, \widehat \Bl^{s,n}_h)
\\ +S_{h}^{s}(\mathcal D^{s,n},  \mathcal V^{s})+\lambda^s(\widehat\nabla\cdot \widehat{\bm{d}}_h^{s,n}, \widehat\nabla\cdot\widehat\v_h^s)_{\widehat {\mathcal T}_{s,h}}
-(\bm g^{f,n}, \v_h^{f,n})_{{\mathcal T}_{f,h}^{n}}&=0,\\
    C_{h,2}^{f,n}(\mathcal U^{f,n},  \mathcal Q ^{f,n})&=0,\\
	( \tilde{D}_t \widehat{\bm{d}}_h^{s,n}, \ms_h \rangle _{\widehat {\mathcal T}_{s,h}}-(\widehat \u^{s,n},\ms_h ) _{\widehat {\mathcal T}_{s,h}}&=0,\\
	\langle \tilde{D}_t  \underline{\widehat{\bm{d}}}_h^{s,n},\underline{\widehat{\bm m}}_h^{s}\rangle _{\partial \widehat {\mathcal T}_{s,h}}-\langle \underline{ \widehat \u}^{s,n},\underline{\widehat{\bm m}}_h^{s} \rangle _{\partial \widehat {\mathcal T}_{s,h}}&=0,\\
	(\kappa^f\widehat\nabla \widehat{\bm{\eta}}_h^{f,n},\widehat\nabla \mf_h)_{\widehat {\mathcal T}_{f,h}} &=0, 
\end{aligned}
\right.
\end{align}
\noindent for all $(\Bm^{f,n}_h, \mathcal V^{f,n},\mathcal Q^{f,n}, \widehat{\bm m}_h^{f})\in  \Bs^{f,n}_{h,k-1}\times (\V^{f,n}_{h,k} \times \underline{ \V}^{f,n}_{h,k})\times  (Q^{f,n}_{h, k-1}\times \underline{ Q}^{f,n}_{h,k})\times \widehat \V_{0,h,k}^{\mathcal M}$ and $(\widehat\Bm^{s}_h,\widehat{\mathcal V}^{s},\widehat{\mathcal M}^{s} )\in \widehat \Bs^{s}_{h,k-1}\times (\widehat \V^{s}_{h,k}\times \underline{\widehat \V}^{s}_{h,k})\times  (\widehat \V^{s}_{h,k}\times \underline{\widehat \V}^{s}_{h,k})$ with  $\underline{\v}_h^{f,n}=\underline{\widehat \v}_h^{s}\circ (\A^{n}_e)^{-1}$ on $\Gamma_h^{n}$. 
Here and in Scheme-D, we set $$\mathcal Q^{f,n}:=(q_h^{f,n},\underline{q}_h^{f,n}). \quad \widehat{\mathcal M}^{s}:=(\widehat{\bm m}_h^{s}, \underline{\widehat{\bm m}}_h^{s} ).$$
\item \textbf{Scheme-D}:

Find $(\Bl^{f,n}_h, \mathcal U^{f,n},\widetilde P^{f,n}, \widehat{\bm \eta}_h^{f,n})  \in \Bs^{f,n}_{h,k-1}\times (\V^{f,n}_{h,k} \times \underline{ \V}^{f,n}_{h,k})\times  (Q^{f,n}_{h, k-1}\times \underline{ Q}^{f,n}_{h,k})\times \widehat \V_{h,k}^{\mathcal M}$ 
and $(\widehat\Bl^{s,n}_h,\widehat{\mathcal U}^{s,n},\widehat{\mathcal D}^{s,n} )\in \widehat \Bs^{s}_{h,k-1}\times (\widehat \V^{s}_{h,k}\times \underline{\widehat \V}^{s}_{h,k})\times  (\widehat \V^{s}_{h,k}\times \underline{\widehat \V}^{s}_{h,k})$ for $n=1,2,\cdots,N$, with $\widehat{\bm \eta}_h^{f,n}=\underline{ \widehat{\bm d}}_h^{s,n}$ on $\widehat\Gamma_h$, $\underline{\u}_h^{f,n}=\underline{\widehat \u}_h^{s,n}\circ (\A^{n}_e)^{-1}$ on $\Gamma_h^{n}$, and $\bm w_h^{f,n}=\tilde{D}_t\widehat {\bm \eta}_h^{f,n}  \circ (\A_h^{n})^{-1}$, such that
\begin{align}\label{full_discrete_D}
\left\{
\begin{aligned}
A_h^{f,n}(\Bl^{f,n}_h, \Bm^{f,n}_h )+  2\rho^f \mu^f C_{h,1}^{f,n}(\mathcal U^{f,n}, \Bm^{f,n}_h)&=0,\\
A_h^{s}(\widehat\Bl^{s,n}_h, \widehat\Bm^{s}_h)+2\mu^s C_{h}^{s}(\mathcal D^{s,n}, \widehat \Bm^{s}_h)&=0,\\
\tfrac{\rho^f}{\tau} (\u_h^{f,n}, \v_h^f)_{{\mathcal T}_{f,h}^{n}}-\tfrac{\rho^f}{\tau}(\u_h^{f,n-1}, \v_h^{f,n}\circ\X_{n-1,h}^{n} )_{\mathcal T _{f,h}^{n-1}}
\\-\rho^f((\nabla{\bm{w}_h^{f,n}})^{\top} \u_h^{f,n}-\tfrac{1}{2} \nabla\cdot{\bm{w}_h^{f,n}} \u_h^{f,n}, \v_h^{f,n})_{\mathcal T _{f,h}^{n}}
+\rho^f T^{f,n}_h(\bm{w}_h^{f,n}, \mathcal U^{f,n},\mathcal V^{f,n})
\\-C_{h,1}^{f,n}(\mathcal V^{f,n}, \Bl^{f,n}_h)-C_{h,2}^{f,n}(\mathcal V^{f,n}, \widetilde P^{f,n})+ S_{h}^{f,n}(\mathcal U^{f,n},  \mathcal V^{f,n})+\rho^s(\tilde{D}_t\widehat \u_h^{s,n},\widehat \v_h^s )_{\widehat {\mathcal T}_{s,h}}
\\-C_{h}^{s}(\widehat{ \mathcal V}^{s}, \widehat \Bl^{s,n}_h)+S_{h}^{s}(\mathcal D^{s,n},  \mathcal V^{s})+\lambda^s(\widehat\nabla\cdot \widehat{\bm{d}}_h^{s,n}, \widehat\nabla\cdot\widehat\v_h^s)_{\widehat{\mathcal T}_{s,h}}
-(\bm g^{f,n}, \v_h^f)_{\mathcal T_{f,h} ^{n}}&=0,\\
    C_{h,2}^{f,n}(\mathcal U^{f,n},  \mathcal Q ^{f,n})&=0,\\
	( \tilde{D}_t \widehat{\bm{d}}_h^{s,n}, \ms_h \rangle _{\widehat {\mathcal T}_{s,h}}-(\widehat \u^{s,n},\ms_h ) _{\widehat {\mathcal T}_{s,h}}&=0,\\
	\langle \tilde{D}_t  \underline{\widehat{\bm{d}}}_h^{s,n},\underline{\widehat{\bm m}}_h^{s}\rangle _{\partial \widehat {\mathcal T}_{s,h}}-\langle \underline{ \widehat \u}^{s,n},\underline{\widehat{\bm m}}_h^{s} \rangle _{\partial \widehat {\mathcal T}_{s,h}}&=0,\\
	(\kappa^f\widehat\nabla \widehat{\bm{\eta}}_h^{f,n},\widehat\nabla \mf_h)_{\widehat {\mathcal T} _{f,h}} &=0, 
\end{aligned}
\right.
\end{align}
\noindent for all $(\Bm^{f,n}_h, \mathcal V^{f,n},\mathcal Q^{f,n}, \widehat{\bm m}_h^{f})\in  \Bs^{f,n}_{h,k-1}\times (\V^{f,n}_{h,k} \times \underline{ \V}^{f,n}_{h,k})\times  (Q^{f,n}_{h, k-1}\times \underline{ Q}^{f,n}_{h,k})\times \widehat \V_{0,h,k}^{\mathcal M}$ and $(\widehat\Bm^{s}_h,\widehat{\mathcal V}^{s},\widehat{\mathcal M}^{s} )\in \widehat \Bs^{s}_{h,k-1}\times (\widehat \V^{s}_{h,k}\times \underline{\widehat \V}^{s}_{h,k})\times  (\widehat \V^{s}_{h,k}\times \underline{\widehat \V}^{s}_{h,k})$ with  $\underline{\v}_h^{f,n}=\underline{\widehat \v}_h^{s}\circ (\A^{n}_e)^{-1}$ on $\Gamma_h^{n}$. 
\end{itemize}

\begin{myremark}\label{rem42}
In  Scheme-C as well as Scheme-D, the fluid, structural, and ALE mapping equations form  a monolithic,  strongly coupled nonlinear system  of the following form: Find $\mathbf{X}_h^n := (\bm u_*^{f,n}, \widehat{\bm u}_*^{s,n}, \widehat{\bm \eta}_h^{f,n})$, for $ n=1,\cdots,N$,  where the fluid unknown is given by $\bm u_*^{f,n}:=(\Bl^{f,n}_h, \mathcal U^{f,n}, \widetilde P^{f,n})$ and the structural unknown is given by $\widehat{\bm u}_*^{s,n}:=(\widehat\Bl^{s,n}_h,\widehat{\mathcal U}^{s,n},\widehat{\mathcal D}^{s,n})$, such that
\begin{align}\label{nonlinear_system}
\mathcal{R}_h^n(\mathbf{X}_h^n)=0,
\end{align}
where the nonlinear residual $\mathcal{R}_h^n$ involves     the current computational mesh ${\mathcal T}_{f,h}^{n}$ that depends on the fluid mesh displacement unknown $\widehat{\bm \eta}_h^{f,n}$.  
The resulting system \eqref{nonlinear_system} 
can be solved by a Newton-type iteration through the following steps:

\medskip
\noindent
\textbf{Step 1: Initialization.}  
Given $\mathbf{X}_h^{n-1}$, 
set  the initial guess
\[
\mathbf{X}_h^{n,0}=(\bm u_*^{f,n,0}, \widehat{\bm u}_*^{s,n,0}, \widehat{\bm \eta}_h^{f,n,0}):=\mathbf{X}_h^{n-1}
\]
and the iteration number $k=0$.

\medskip
\noindent
\textbf{Step 2: Domain definition.}  
 With the current iteration  $\mathbf{X}_h^{n,k}=(\bm u_*^{f,n,k}, \widehat{\bm u}_*^{s,n,k}, \widehat{\bm \eta}_h^{f,n,k})$, define the fluid computational mesh ${\mathcal T}_{f,h}^{n,k}$ through the fluid mesh displacement $\widehat{\bm \eta}_h^{f,n,k}$. 

\medskip
\noindent
\textbf{Step 3: Residual and Jacobian assembly.}  
Assemble the nonlinear residual $\mathcal{R}_h^n(\mathbf{X}_h^{n,k})$ on ${\mathcal T}_{f,h}^{n,k}$ (and $\widehat{\mathcal T}_{s,h}$ for the structural part) and compute the Jacobian operator $D\mathcal R_h^n(\mathbf X_h^{n,k})$, i.e. the Fr\'{e}chet derivative of the residual  with respect to all unknowns.

\medskip
\noindent
\textbf{Step 4: Linearly solve and update.}  
Determine the Newton increment $\delta \mathbf{X}_h^{n,k}$ from
\begin{align*}
D\mathcal{R}_h^n(\mathbf{X}_h^{n,k}) \bigl[ \delta \mathbf{X}_h^{n,k} \bigr] = - \mathcal{R}_h^n(\mathbf{X}_h^{n,k}),
\end{align*}
and compute
\[
\mathbf{X}_h^{n,k+1} = \mathbf{X}_h^{n,k} + \delta \mathbf{X}_h^{n,k}.
\]

\medskip
\noindent
\textbf{Step 5: Convergence check.}  
Repeat Steps 2--5 with $k:=k+1$ until $\|\mathcal{R}_h^n(\mathbf{X}_h^{n,k+1})\|_{\ell^2} \le \varepsilon$ or a prescribed maximum iteration number of  $k$ is reached.

\medskip
\noindent
\textbf{Step 6: End of iteration.}  Set 
\[
\mathbf{X}_h^{n}:=\mathbf{X}_h^{n,k+1}, \quad {\mathcal T}_{f,h}^n:={\mathcal T}_{f,h}^{n,k+1}.
\]
 \end{myremark}
 
 \begin{myremark}
	In the equation $C_{h,2}^{f,n}(\mathcal U^{f,n}, \mathcal Q^{f,n})=0$   in \eqref{full_discrete_C} and \eqref{full_discrete_D}, setting $q_h^{f,n}=0$ yields
	\[
	\langle ({\u}_h^{f,n}-\underline{\u}_h^{f,n}) \cdot {\bm{n}^f},\, \underline q_h^{f,n} \rangle_{\partial {\mathcal T}^{n}_{f,h}}=0,
	\]
	which shows that $\underline p_h^{f,n}$ serves as a Lagrange multiplier enforcing the continuity of the normal component of ${\u}_h^{f,n}$ across interelement boundaries (also cf. Theorem \ref{Div-free}).
\end{myremark}
\begin{myremark}\label{CD-equal}
Similar to the case of temporal semi-discretization (cf. Remark \ref{AB-equal}), Scheme-C and Scheme-D are   equivalent  if $\nabla\bm w_h^{f,n}=0$ for any $n$, which means that the fluid   domain is fixed all the time. 
\end{myremark}

The following result shows that the fluid velocity approximations obtained through the above  two  HDG fully discrete schemes are globally divergence-free,  ensuring  the mass-conserving property for the incompressible fluid.  
\begin{mytheorem}\label{Div-free}
	   Scheme-C and Scheme-D both produce  globally divergence-free velocity approximations, i.e. it holds for the discrete fluid velocity solutions $\u_h^{f,n}\in  \V^{f,n}_{h,k}$ ($n=1,\dots,N$) that 
	\begin{align*}
		\u_h^{f,n}\in \bm H({\rm{div}},\Omega_{f,h}^{n}),\quad  \nabla\cdot \u_h^{f,n}=0.
	\end{align*}
	\begin{proof}
	Notice that  from    \eqref{full_discrete_C}, as well as  \eqref{full_discrete_D}, it follows that the discrete solution $\mathcal U^{f,n}=(\u_h^{f,n},\underline{\u}_h^{f,n})\in  \V^{f,n}_{h,k} \times \underline{ \V}^{f,n}_{h,k}$ satisfies
	\begin{align}\label{0=C_{h,2}}
	0=C_{h,2}^{f,n}(\mathcal U^{f,n}, \mathcal Q^{f,n})= (\nabla\cdot \u_h^{f,n},q^{f,n}_h)_{{\mathcal T}^{n}_{f,h}}-\langle {\u}_h^{f,n}\cdot {\bm{n}^f} , \underline q_h^{f,n} \rangle_{\partial {\mathcal T}^{n}_{f,h}}
	\end{align}	
	for any $ \mathcal Q^{f,n}=(q_h^{f,n},\underline{q}_h^{f,n}) \in  Q^{f,n}_{h,k-1}\times \underline{ Q}^{f,n}_{h,k}$.
In view of  \eqref{full-fluid-spaces},  for any $K=\A_h^n(\widehat K)$ and $e = \A_e^n(\widehat{e})$ with  $  \widehat K\in \widehat {\mathcal T}_{f,h}$ and $ \widehat e\in \widehat {\mathcal E}_{f,h}, $  we have 
				\begin{align*}
				 & \u_h^{f,n}\circ \A_h^{n}\mid_{K}=\tfrac{1}{\widehat J^n_{h,K}}\widehat{\bm F}^n_{h,K}\widehat\u_h^f\circ (\A_{h}^n)^{-1}, \quad  \widehat\u_h^f \in {P_{k}(\widehat K)}, \\
				&\underline { \u}^{f,n}_h\mid_{e}= \underline {\widehat \u}_h\circ (\A^n_{ e})^{-1}, \quad   \underline {\widehat \u}_h \in {P_k(\widehat e)}, \\
		&q^{f,n}_h\mid_{K}=\widehat q_h\circ (\A_{h}^n)^{-1}, \quad \widehat q_h \in {P_{k-1}(\widehat K)},\\
	&\underline { q}_h^{f,n}\mid_{e}=\underline {\widehat q}_h\circ (\A^n_{ e})^{-1},  \quad  \underline {\widehat q}_h \in {P_k(\widehat e)},  
	\end{align*}	
which imply (cf. \eqref{div_in} and  \eqref{212})  $(\nabla\cdot \u_h^{f,n})\circ\A_{h}^{n}=\frac{1}{\widehat \J_{f,h}^n}\widehat\nabla\cdot \widehat \u_h^{f,n}$ and 
		\begin{align*}
			&(\nabla\cdot \u_h^{f,n},  q^{f,n}_h)_K=(\widehat \nabla\cdot \widehat \u_h^{f,n},  \widehat q_h )_{\widehat K},\quad 
			\langle \u_h^{f,n}\cdot\bm n^f , q^{f,n}_h\rangle_{\partial K}=\langle\widehat \u_h^{f,n}\cdot\widehat{ \bm n}^f ,\underline {\widehat q}_h\rangle_{\partial \widehat K }.
		\end{align*}
		Then \eqref{0=C_{h,2}} leads to
		\begin{align*}
	0=& (\nabla\cdot \u_h^{f,n},q^{f,n}_h)_{{\mathcal T}^{n}_{f,h}}-\langle {\u}_h^{f,n}\cdot {\bm{n}^f} , \underline q_h^{f,n} \rangle_{\partial {\mathcal T}^{n}_{f,h}}\\
	=&(\widehat \nabla\cdot \widehat \u_h^{f,n},\widehat q^{f}_h)_{\widehat {\mathcal T}_{f,h}}-\langle \widehat {\u}_h^{f,n}\cdot \widehat {\bm{n}^f} , \underline {\widehat q}_h^{f} \rangle_{\partial \widehat {\mathcal T}_{f,h}}.
		\end{align*}
		Take   $(\widehat q_h^{f},\underline{\widehat q}_h^{f})=(\widehat \nabla\cdot \widehat  \u_h^{f,n}-\widehat c, \widehat r_b-\widehat c)$ in this equation, with $\widehat c=\frac{1}{|\widehat \Omega_{f,h}|}\int_{\widehat \Omega_{f,h}}\widehat \nabla\cdot \widehat \u_h^{f,n}d\widehat\x$ and 
\begin{align*}
		\widehat r_b|_{\widehat e}=\left\{
\begin{aligned}
&-(\widehat \u_h^{f,n}\cdot{\widehat{\bm n}}^f )|_{\partial \widehat K_1\cap \widehat e}-(\widehat \u_h^{f,n}\cdot \widehat{ \bm n}^f )|_{\partial \widehat K_2\cap \widehat e},&\quad \text{ if } \widehat e=\partial \widehat K_1\cap \partial \widehat K_2,  \widehat K_1,  \widehat K_2\in \widehat {\mathcal T}_{f,h},\\
&0, &\quad\text{ if } \widehat e\subset \partial\widehat \Omega_{f,h}
\end{aligned}
\right.
\end{align*}
for any $\widehat e\in \widehat {\mathcal E}_{f,h}$, then we get	
		\begin{align*}
			0&=(\widehat \nabla\cdot \widehat \u_h^{f,n},\widehat \nabla\cdot \widehat  \u_h^{f,n}-\widehat c)_{\widehat{\mathcal T}_{f,h}}-\langle \widehat {\u}_h^{f,n}\cdot \widehat {\bm{n}^f} , \widehat r_b-\widehat c \rangle_{\partial \widehat {\mathcal T}_{f,h}}\\
			&=\|\widehat \nabla\cdot \widehat \u_h^{f,n}\|^2_{L^2({\widehat {\mathcal T}_{f,h}} )}-\langle \widehat {\u}_h^{f,n}\cdot \widehat {\bm{n}^f} , \widehat r_b\rangle_{\partial \widehat {\mathcal T}_{f,h}}\\
			&=\|\widehat \nabla\cdot \widehat \u_h^{f,n}\|^2_{L^2({\widehat {\mathcal T}_{f,h}} )}+\sum_{\widehat e\in  \widehat {\mathcal E}_{f,h}/\partial\widehat {\mathcal T}_{f,h}}\|(\widehat \u_h^{f,n}\cdot{\widehat{\bm n}}^f )|_{\partial \widehat K_1\cap \widehat e}+(\widehat \u_h^{f,n}\cdot \widehat{ \bm n}^f )|_{\partial\widehat K_2\cap \widehat e}\|^2_{L^2(\widehat e)}.
		\end{align*}
This means that  $\widehat \nabla\cdot \widehat \u_h^{f,n}|_{\widehat K}=0$ and  $  \widehat \u_h^{f,n}\in H({\widehat{ \rm{div}}},\widehat \Omega_h^{f})$, and the desired conclusion follows.
	\end{proof}
\end{mytheorem}

To show the energy stability of the fully discrete schemes, we first introduce operators $\mathcal K_h^{f,n}: \V^{f,n}_{h,k} \times \underline{ \V}^{f,n}_{h,k}\rightarrow  \Bs^{f,n}_{h,k-1}$ and $\mathcal K_h^{s,n}:\widehat \V^{s}_{h,k}\times \underline{\widehat \V}^{s}_{h,k} \rightarrow \widehat \Bs^{s}_{h,k-1}$ defined by
\begin{align}\label{K_h-f-s}
\left\{\begin{aligned}	 &(\mathcal K_h^{f,n}\mathcal V^{f,n}, \Bm^{f,n}_h)=- C_{h,1}^{f,n}(\mathcal V^{f,n}, \Bm^{f,n}_h), \quad \forall \ \mathcal V^{f,n}\in \V^{f,n}_{h,k} \times \underline{ \V}^{f,n}_{h,k},\ \Bm^{f,n}_h \in \Bs^{f,n}_{h,k-1},  \\
	 &(\mathcal K_h^{s,n}\widehat{ \mathcal V}^{s}, \widehat \Bm^{s}_h)=- C_{h}^{s}(\widehat{ \mathcal V}^{s}, \widehat \Bm^{s}_h), \quad\forall\  \widehat{ \mathcal V}^{s}\in \widehat \V^{s}_{h,k}\times \underline{\widehat \V}^{s}_{h,k},\ \widehat \Bm^{s}_h\in  \widehat \Bs^{s}_{h,k-1}.
	 \end{aligned}
	 \right.
\end{align}
It is evident that $\mathcal K_h^{f,n}$ and  $\mathcal K_h^{s,n}$  are well-defined. From definitions and the first two equations in \eqref{full_discrete_C} or \eqref{full_discrete_D}  , we immediately have
\begin{align}\label{L_defi}
	\Bl^{f,n}_h=2\rho^f\mu^f \mathcal K_h^{f,n}\mathcal U^{f,n}, \quad  \widehat\Bl^{s,n}_h=2\mu^s \mathcal K_h^{s,n}\widehat{ \mathcal U}^{s,n}.
\end{align}
In addition, we introduce the following two norms: for any $\mathcal V^{f,n}=(\v_h^{f,n},\underline{\v}_h^{f,n}) \in \V^{f,n}_{h,k} \times \underline{ \V}^{f,n}_{h,k}$ and $\widehat{ \mathcal V}^s=(\widehat \v_h^{s},\underline{\widehat \v}_h^{s})\in \widehat \V^{s}_{h,k}\times \underline{\widehat \V}^{s}_{h,k}$,
\begin{align*}
	&\|\mathcal V^{f,n}\|_{\V^{f,n}_{h,k} \times \underline{ \V}^{f,n}_{h,k}}:=\|\mathcal K_h^{f,n}\mathcal V^{f,n}\|_{L^2({{\mathcal T}_{f,h}^{n}})}+\|s_1^\frac{1}{2}(\v_h^{f,n}-\underline{\v}_h^{f,n})\|_{L^2(\partial{\mathcal T}_{f,h}^{n})},\\
	&\| \widehat{\mathcal V}^{s}\|_{\widehat \V^{s}_{h,k}\times \underline{\widehat \V}^{s}_{h,k}}:=\|\mathcal K_h^{s,n}\widehat { \mathcal V}^{s}\|_{L^2(\partial\widehat{\mathcal T}_{s,h})}+\|s_2^\frac{1}{2}(\widehat \v_h^{s}-\underline{\widehat \v}_h^{s})\|_{L^2(\partial\widehat {\mathcal T}_{s,h})}.
\end{align*}
 The following HDG Sobolev embedding inequality holds (see \cite[Lemma~3.3]{Chen2023}):
\begin{align}\label{eq:emb}
	\|\v^{f,n}\|_{L^2(\mathcal T^n_{f,h})}\leq C\|\mathcal V^{f,n}\|_{\V^{f,n}_{h,k} \times \underline{ \V}^{f,n}_{h,k}},\qquad \forall\ \mathcal V^{f,n}=(\v_h^{f,n},\underline{\v}_h^{f,n}) \in \V^{f,n}_{h,k} \times \underline{ \V}^{f,n}_{h,k}.
\end{align}
Similar to \eqref{def:G}, for $n=1,2,\cdots,N$ and $t\in [t_{n-1},t_n]$ we define  
\begin{align}\label{def:Gh}
\left\{
\begin{aligned}
	G_{1,n,h}:=& \sup_{t\in[t_{n-1},t_n]}
	\Bigl(1+ \tfrac12\|(\mathbb{G}_{n-1,h}(t))^{-1}\|_{L^\infty({\mathcal T}_{f,h}^{n-1})} \|\mathbb{G}_{n-1,h}(t)\|_{L^\infty({\mathcal T}_{f,h}^{n-1}))}\Bigr)  \|\mathbb{G}_{n-1,h}(t)\|_{L^\infty({\mathcal T}_{f,h}^{n-1})},\\
	 G_{2,n,h}  :=&\sup_{t\in [t_{n-1},t_n]}\Bigl(\|\mathbb{J}_{n,h}(t)\|_{L^\infty({\mathcal T}_{f,h}^{n})}+\bigl(2+\tfrac12\|\mathbb{F}_{n,h}(t)\|^2_{L^\infty({\mathcal T}_{f,h}^{n})}\bigr) \|(\mathbb{F}_{n,h}(t))^{-\top}\|_{L^\infty({\mathcal T}_{f,h}^{n})}\|\mathbb{G}_{n,h}(t)\|_{L^\infty({\mathcal T}_{f,h}^{n})} 
	\\ &\ +\tfrac12\|(\mathbb{F}_{n,h}(t))^{-\top}\|^2_{L^\infty({\mathcal T}_{f,h}^{n})}\|\mathbb{F}_{n,h}(t)\|_{L^\infty({\mathcal T}_{f,h}^{n})}\|\mathbb{G}_{n,h}(t)\|_{L^\infty({\mathcal T}_{f,h}^{n})}
	 \Bigr),
	\end{aligned}
	\right.	
\end{align}
where  $$\mathbb{F}_{n,h}(t):=\widehat{\bm F}^t_{f,h}(\widehat{\bm F}^n_{f,h})^{-1}, \quad \mathbb{J}_{n,h}(t):=\frac{\widehat J_{f,h}^n}{\widehat J_{f,h}^t}, \quad \mathbb{G}_{n,h}(t):=\mathbb{J}_{n,h}(t)\mathbb{F}_{n,h}(t).$$
Take $\X_{i,h}^j:= \A_h^j\circ (\A_h^i)^{-1}$ for $i,j = n-1,n$, similar to \eqref{X_de}  we have
\begin{align*}
	\v_h \circ\X_{i,h}^j&=(\frac{\widehat{\bm F}_{f,h}^{i}}{\widehat \J_{f,h}^{i}}\widehat\v_h) \circ (\A^i_h)^{-1} = \left (\frac{\widehat \J_{f,h}^{j}}{\widehat \J_{f,h}^{i}}\widehat{\bm F}_{f,h}^{i}(\widehat{\bm F}^j_{f,h})^{-1}\v_h\circ \A_h^j\right )\circ (\A_h^i)^{-1}, \quad \forall \v_h\in \V^{f,j}_{h,k}.
\end{align*}

We now define the fully discrete energy.
Let $E_h^{n}$ denote the sum of the full discrete kinetic energy of the fluid and the discrete kinetic and discrete elastic energy of the structure at time $t_n$, i.e.
\begin{align}\label{fully discrete energy}
	E^{n}_h:=\tfrac{\rho^f}{2}\|\u_h^{f,n}\|^2_{L^2({{\mathcal T}_{f,h}^n})}+\tfrac{\rho^s}{2} \|\widehat \u_h^{s,n}\|^2_{L^2(\widehat {\mathcal T}_{s,h}) }+\left (\mu^s\|\widehat{\mathcal D}^{s,n} \|^2_{\widehat \V^{s}_{h,k}\times \underline{\widehat \V}^{s}_{h,k} }+\tfrac{\lambda^s }{2}\|\widehat\nabla\cdot {\widehat{\bm{d}}_h^{s,n} }\|^2_{L^2(\widehat {\mathcal T}_{s,h} )}\right ).
\end{align}
And let $\tilde E _{f,h}^{n}$ denote the fluid viscous dissipation and $\tilde E_{s, d, h}^{n}$ the structural numerical dissipation, defined respectively by
\begin{align*}
	\tilde E^{n}_{f,h}=&\tau\rho^f\mu^f\|\mathcal U^{f,n}\|^2_{ \V^{f,n}_{h,k} \times \underline{ \V}^{f,n}_{h,k}},\\
	\tilde E_{s,d,h}^{n}=&\tfrac{1}{2}(\rho^s\|\widehat \u_h^{s,n}-\widehat \u_h^{s,n-1}\|^2_{L^2({\widehat {\mathcal T}_{s,h}})}+ 2\mu^s\|\widehat{\mathcal D}^{s,n}-\widehat{\mathcal D}^{s,n-1}\|^2_{\widehat \V^{s}_{h,k}\times \underline{\widehat \V}^{s}_{h,k}}+\lambda^s\|\widehat{\nabla}\cdot\widehat{\bm{d}}_h^{s,n}-\widehat{\nabla}\cdot\widehat{\bm{d}}_h^{s,n-1}\|^2_{L^2({\widehat {\mathcal T}_{s,h}})}).
\end{align*}

By following  the same   routines as in the proofs of Theorems \ref{theorem_A} and \ref{theorem_B}, we can obtain  energy stability results for  the  proposed fully discrete schemes. 
\begin{mytheorem}[Stability of {Scheme-C}]\label{th_fullC}
Let $(\u_h^{f,n},\widehat \u_h^{s,n},p^{f,n},\widehat{\bm d}_h^{s,n}, \widehat{\bm \eta}_h^{f,n})$ solve the fully discrete system \eqref{full_discrete_C} for $n=1,2,\cdots,N$.  If the time step $\tau$ satisfies 
\[
	\tau<\widetilde \tau_0:=1/\max\limits_{n=1,\cdots,N}\Bigl(\tfrac{3}{2}\|\nabla {\bm{w}_h^{f,n}}\|_{L^\infty ({\mathcal T_{f,h} ^{n}})}+G_{1,n,h}\|\nabla {\bm{w}_{h,n-1}^{f,n}}\|_{L^\infty ({\mathcal T_{f,h}^{n-1}})} \Bigr ),
\]
	 then the following  energy estimate holds:  
\begin{align}\label{sta_4}
		&E_h^N+\sum_{n=1}^{N}\left(\tilde E^{n}_{f,h}+\tilde E_{s,d,h}^{n}+\frac{\rho^f}{2}\|\u_h^{f,n-1}\circ\X^{n-1}_{n,h}-\u_h^{f,n}\|^2_{L^2({\mathcal T}_{f,h}^{n})}\right)\lesssim \tau\sum_{n=1}^{N}\|\bm g^{f,n}\|^2 _{L^{2}({\mathcal T}_{f,h}^{n}) }+E_h^0.
	\end{align}
	\begin{proof}
		The fifth and sixth equations of   \eqref{full_discrete_C} imply 
		\begin{align*}
			\widehat \u_h^{s,n}|_{\widehat {\mathcal T}_{s,h} }=\tilde{D}_t\widehat {\bm d}_h^{s,n}\quad \text{and}\quad \underline{ \widehat \u}_h^{s,n}|_{\widehat {\mathcal E}_{s,h}}=\tilde{D}_t \underline {\widehat {\bm d}}_h^{s,n},
		\end{align*}
	respectively.
		Applying the Young's inequality and the HDG Sobolev embedding inequality \eqref{eq:emb}, we obtain
		\begin{align*}
		(\bm g^{f,n}, \v_h^{f,n})_{{\mathcal T}_{f,h}^{n}} &\le \epsilon \|\bm g^{f,n}\|_{L^2({\mathcal T}_{f,h}^{n})}^2+ \frac{1}{4\epsilon}\|\v_h^{f,n}\|_{L^2({\mathcal T}_{f,h}^{n})}^2
		\\&\le \epsilon \|\bm g^{f,n}\|_{L^2({\mathcal T}_{f,h}^{n})}^2+ \frac{C}{4\epsilon}\|\widehat{\mathcal V}^{s,n}\|_{\widehat \V^{s}_{h,k}\times \underline{\widehat \V}^{s}_{h,k}}^2,
		\qquad \forall\,\epsilon>0.
		\end{align*}
		By taking $(\Bm^{f,n}_h, \mathcal V^{f,n},\mathcal Q^{f,n})=(\Bl^{f,n}_h, \mathcal U^{f,n},\widetilde P^{f,n})$ and $(\widehat\Bm^{s}_h,\widehat{\mathcal V}^{s},\widehat{\mathcal M}^{s} )=(\widehat\Bl^{s,n}_h,\widehat{\mathcal U}^{s,n},\widehat{\mathcal D}^{s,n} )$ in \eqref{full_discrete_C},  using \eqref{L_defi} and the identity $T^{f,n}_h(\bm w_h^{f,n}  , \mathcal U^{f,n},\mathcal U^{f,n})=0$, and  then following  the same line as in the proof  of Theorem~\ref{theorem_A},  we can get the desired estimate \eqref{sta_4}. Here for brevity  we omit the details of proof.
	\end{proof}
\end{mytheorem}

\begin{mytheorem}[Stability of {Scheme-D}]\label{th_fullD}
	 Let $(\u_h^{f,n},\widehat \u_h^{s,n},p_h^{f,n},\widehat{\bm d}_h^{s,n}, \widehat{\bm \eta}_h^{f,n})$ solve the fully discrete system \eqref{full_discrete_D} for $n=1,\dots,N$.  If the time step $\tau$ satisfies 
\[
	\tau^2<\widetilde \tau_0':=1/\max\limits_{n=1,\cdots,N}\Bigl(G_{2,n,h}\| \nabla {\bm{w}_h^{f,n}}\|^2_{L^\infty ({\mathcal T_{f,h}^{n}})} \Bigr ),
\]
	 then the following  energy estimate holds:
	\begin{align}\label{sta_5}
		&E_h^N+\sum_{n=1}^{N}\left(\tilde E^{n}_{f,h}+\tilde E_{s,d,h}^{n}+\frac{\rho^f}{2}\|\u_h^{f,n-1}-\u_h^{f,n}\circ \X_{n-1,h}^{n}\|^2_{L^2({{\mathcal T}_{f,h}^{n-1}})}\right) \lesssim \tau\sum_{n=1}^{N}\|\bm g^{f,n}\|^2 _{L^{2}({\mathcal T}_{f,h}^{n}) }+E_h^0.
	\end{align}
	\begin{proof}
		By choosing $(\Bm^{f,n}_h, \mathcal V^{f,n},\mathcal Q^{f,n})=(\Bl^{f,n}_h, \mathcal U^{f,n},\widetilde P^{f,n})$ and $(\widehat\Bm^{s}_h,\widehat{\mathcal V}^{s},\widehat{\mathcal M}^{s} )=(\widehat\Bl^{s,n}_h,\widehat{\mathcal U}^{s,n},\widehat{\mathcal D}^{s,n} )$ in \eqref{full_discrete_D}, with the help of   \eqref{L_defi} and $T^{f,n}_h(\bm w_h^{f,n}  , \mathcal U^{f,n},\mathcal U^{f,n})=0$, the desired estimate follows   analogously to that of of Theorem \ref{theorem_B}. Here for brevity  we omit the details of proof.
	\end{proof}
\end{mytheorem}

\begin{myremark} Note that the following spatially semi-discrete HDG method, corresponding to the fully discrete HDG methods (Scheme-C and Scheme-D), is energy conservative if the external forcing $\bm g^f=\bm 0$. 
For any $t\in (0,T]$, find $(\Bl^{f}_h(t), \mathcal U^{f}(t),\widetilde P^{f}(t), \widehat{\bm \eta}_h^{f}(t))  \in \Bs^{f,t}_{h,k-1}\times (\V^{f,t}_{h,k} \times \underline{ \V}^{f,t}_{h,k})\times  (Q^{f,t}_{h, k-1}\times \underline{ Q}^{f,t}_{h,k})\times \widehat \V_{h,k}^{\mathcal M}$ and $(\widehat\Bl^{s}_h(t),\widehat{\mathcal U}^{s}(t),\widehat{\mathcal D}^{s}(t) )\in \widehat \Bs^{s}_{h,k-1}\times (\widehat \V^{s}_{h,k}\times \underline{\widehat \V}^{s}_{h,k})\times  (\widehat \V^{s}_{h,k}\times \underline{\widehat \V}^{s}_{h,k})$, 
with $\widehat{\bm \eta}_h^{f}(t)=\underline{ \widehat{\bm d}}_h^{s}(t)$ on $\widehat\Gamma_h$, $\underline{\u}_h^{f}(t)=\underline{\widehat \u}_h^{s}(t)\circ (\A_e(t))^{-1}$ on $\Gamma_h(t)$, and $\bm w_h^{f}(t)=\partial_t\widehat {\bm \eta}_h^{f}(t)  \circ (\A_h(t))^{-1}$, such that
\begin{align}\label{eq:space_discrete}
\left\{
\begin{aligned}
A_h^{f,t}(\Bl^{f}_h(t), \Bm^{f}_h )+2 \rho^f \mu^f C_{h,1}^{f,t}(\mathcal U^{f}(t), \Bm^{f}_h)&=0,\\
A_h^{s}(\widehat\Bl^{s}_h(t), \widehat\Bm^{s}_h)+2\mu^s C_{h}^{s}(\mathcal D^{s}(t), \widehat \Bm^{s}_h)&=0,\\
\rho^f(\partial_t^M\u_h^{f}(t), \v_h^{f})_{{\mathcal T}_{f,h}(t)}+\rho^f T^{f,t}_h(\bm{w}_h^{f}(t), \mathcal U^{f}(t),\mathcal V^{f})
\\+\rho^f(\nabla{\bm{w}_h^{f}(t)} \u_h^{f}(t)-\tfrac{1}{2} \nabla\cdot{\bm{w}_h^{f}(t)} \u_h^{f}(t), \v_h^{f})_{{{\mathcal T}_{f,h}}(t)}-C_{h,1}^{f,t}(\mathcal V^{f}, \Bl^{f}_h(t))
\\-C_{h,2}^{f,t}(\mathcal V^{f}, \widetilde P^{f}(t))+S_{h}^{f,t}(\mathcal U^{f}(t),  \mathcal V^{f})+\rho^s(\partial_t\widehat \u_h^{s}(t),\widehat \v_h^s )_{\widehat {\mathcal T}_{s,h}}-C_{h}^{s}(\widehat{ \mathcal V}^{s}, \widehat \Bl^{s}_h(t))
\\+\lambda^s(\widehat\nabla\cdot \widehat{\bm{d}}_h^{s}(t), \widehat\nabla\cdot\widehat\v_h^s)_{\widehat {\mathcal T}_{s,h}}+S_{h}^{s}(\mathcal D^{s}(t),  \mathcal V^{s})-(\bm g^{f}(t), \v_h^{f})_{{\mathcal T}_{f,h}^{t}}&=0,\\
    C_{h,2}^{f,t}(\mathcal U^{f}(t),  \mathcal Q ^{f})&=0,\\
	( \partial_t \widehat{\bm{d}}_h^{s}(t), \ms_h \rangle _{\widehat {\mathcal T}_{s,h}}-(\widehat \u^{s}(t),\ms_h ) _{\widehat {\mathcal T}_{s,h}}&=0,\\
	\langle \partial_t  \underline{\widehat{\bm{d}}}_h^{s}(t),\underline{\widehat{\bm m}}_h^{s}\rangle _{\partial \widehat {\mathcal T}_{s,h}}-\langle \underline{ \widehat \u}^{s}(t),\underline{\widehat{\bm m}}_h^{s} \rangle _{\partial \widehat {\mathcal T}_{s,h}}&=0,\\
	(\kappa^f\widehat\nabla \widehat{\bm{\eta}}_h^{f}(t),\widehat\nabla \mf_h)_{\widehat {\mathcal T}_{f,h}} &=0, 
\end{aligned}
\right.
\end{align}
\noindent for all $(\Bm^{f}_h, \mathcal V^{f},\mathcal Q^{f}, \widehat{\bm m}_h^{f})\in  \Bs^{f,t}_{h,k-1}\times (\V^{f,t}_{h,k} \times \underline{ \V}^{f,t}_{h,k})\times  (Q^{f,t}_{h, k-1}\times \underline{ Q}^{f,t}_{h,k})\times \widehat \V_{0,h,k}^{\mathcal M}$ and $(\widehat\Bm^{s}_h,\widehat{\mathcal V}^{s},\widehat{\mathcal M}^{s} )\in \widehat \Bs^{s}_{h,k-1}\times (\widehat \V^{s}_{h,k}\times \underline{\widehat \V}^{s}_{h,k})\times  (\widehat \V^{s}_{h,k}\times \underline{\widehat \V}^{s}_{h,k})$ with  $\underline{\v}_h^{f}=\underline{\widehat \v}_h^{s}\circ (\A_e(t))^{-1}$ on $\Gamma_h(t)$. Here 
\begin{align*}
	&\mathcal U^{f}:=(\u_h^{f},\underline{\u}_h^{f}),\ \mathcal V^{f}:=(\v_h^{f},\underline{\v}_h^{f}) ,\quad
	 \widetilde P^{f}:=(p_h^{f},\underline{p}_h^{f}) ,\  \mathcal Q^{f}:=(q_h^{f},\underline{q}_h^{f}), \\
	&	 \widehat{\mathcal U}^{s}:=(\widehat \u_h^{s}, \underline {\widehat \u}_h^{s} ),   \  \widehat{\mathcal V}^{s}:=(\widehat \v_h^{s}, \underline {\widehat \v}_h^{s} ),  \ \widehat{\mathcal D}^{s}:=(\widehat{\bm d}_h^{s}, \underline{\widehat{\bm d}}_h^{s} ),   \  \widehat{\mathcal M}^{s}:=(\widehat{\bm m}_h^{s}, \underline{\widehat{\bm m}}_h^{s} ), 
\end{align*}
the time-dependent fluid mesh ${{\mathcal T}_{f,h}}(t)$ and the ALE mapping $\A_h(\cdot, t)$ are respectively defined by
\[
{{\mathcal T}_{f,h}}(t):=\{K(t)=\A_h(\widehat K, t): \;  \widehat K\in \widehat{\mathcal{T}}_{f,h}\},
\]
\[
\x=\A_h(\widehat \x, t):=\widehat\x+\etaf_h(\widehat\x,t),\ \forall \widehat \x\in\widehat{K},  \widehat K\in \widehat{\mathcal{T}}_{f,h},
\]
and the finite element spaces $\Bs^{f,t}_{h,k-1}, \V^{f,t}_{h,k}, \underline{ \V}^{f,t}_{h,k}, Q^{f,t}_{h, k-1}$ and $\underline{ Q}^{f,t}_{h,k}$ are defined in the same way as $\Bs^{f,n}_{h,k-1}, \V^{f,n}_{h,k}, \underline{ \V}^{f,n}_{h,k}, Q^{f,n}_{h, k-1}$ and $\underline{ Q}^{f,n}_{h,k}$, with the triangulation ${{\mathcal T}^n_{f,h}}$ replaced by the mesh  ${{\mathcal T}_{f,h}}(t)$. The associated bilinear and trilinear forms are defined accordingly with respect to  ${{\mathcal T}_{f,h}}(t)$. 

	In fact,  for the above semi-discrete scheme, by taking $\bm g^f=\bm 0$ and using \eqref{L_defi}, \eqref{Reynolds} and the skew-symmetric property $T^{f}_h(\cdot, \mathcal U^{f},\mathcal U^{f})=0$,  we can easily derive the following energy identity:
	\begin{align*}
		E_h(T)+\int_0^T\tau\rho^f\mu^f\|\mathcal U^{f}(t)\|^2_{ \V^{f,t}_{h,k} \times \underline{ \V}^{f,t}_{h,k}}dt=E_h(0),
	\end{align*}
	where  the spatially semi-discrete energy
	\begin{align*}
		E_h(t):=\tfrac{\rho^f}{2}\|\u_h^{f}(t)\|^2_{L^2({{\mathcal T}_{f,h}}(t))}+\tfrac{\rho^s}{2} \|\widehat \u_h^{s}(t)\|^2_{L^2(\widehat {\mathcal T}_{s,h}) }+\left (\mu^s\|\widehat{\mathcal D}^{s}(t) \|^2_{ \widehat \V^{s}_{h,k}\times \underline{\widehat \V}^{s}_{h,k} }+\tfrac{\lambda^s }{2}\|\widehat\nabla\cdot {\widehat{\bm{d}}_h^{s} }(t)\|^2_{L^2(\widehat {\mathcal T}_{s,h} )}\right ),
	\end{align*}
with 
	\begin{align*}
		&\|\mathcal U^{f}(t)\|^2_{\V^{f,t}_{h,k} \times \underline{ \V}^{f,t}_{h,k}}:=\|\mathcal K_h^{f}\mathcal U^{f}(t)\|^2_{L^2({{\mathcal T}_{f,h}}(t))}+\|s_1^\frac{1}{2}(\u_h^{f}(t)-\underline{\u}_h^{f}(t))\|^2_{L^2(\partial{\mathcal T}_{f,h}(t))},\\
	&\| \widehat{\mathcal U}^{s}(t)\|^2_{ \widehat \V^{s}_{h,k}\times \underline{\widehat \V}^{s}_{h,k}}:=\|\mathcal K_h^{s}\widehat { \mathcal U}^{s}(t)\|^2_{L^2(\partial\widehat{\mathcal T}_{s,h})}+\|s_2^\frac{1}{2}(\widehat \u_h^{s}(t)-\underline{\widehat \u}_h^{s}(t))\|^2_{L^2(\partial\widehat {\mathcal T}_{s,h})}.
	\end{align*}
	and $\mathcal K_h^{f}$ and $\mathcal K_h^{s}$ are defined in the same way as $\mathcal K_h^{f,n}$ and $ \mathcal K_h^{s,n}$ in \eqref{K_h-f-s}.
\end{myremark}
\section{Numerical experiments}\label{Sec_Num}
In this section, we present three numerical examples. The first two demonstrate the energy stability and convergence of the two  fully discrete  schemes (Scheme-C  and Scheme-D), while the third serves as a  modified Turek-Hron FSI benchmark test with linear elasticity.  All simulations are carried out using the open-source finite element library NGSolve \cite{Sch}, with computational meshes automatically generated from the geometric models via NETGEN \cite{Sch1997} (cf.\ Figure \ref{fig:meshcon}). The nonlinear systems are solved using Newton’s method (cf.\ Remark \ref{rem42}), and the resulting nonsymmetric linearized systems are efficiently addressed by the direct solver UMFPACK \cite{Davis}.
\begin{figure}[htbp]
\centering
\subfigure{\includegraphics[width=0.3\textwidth,height=40.5mm
]{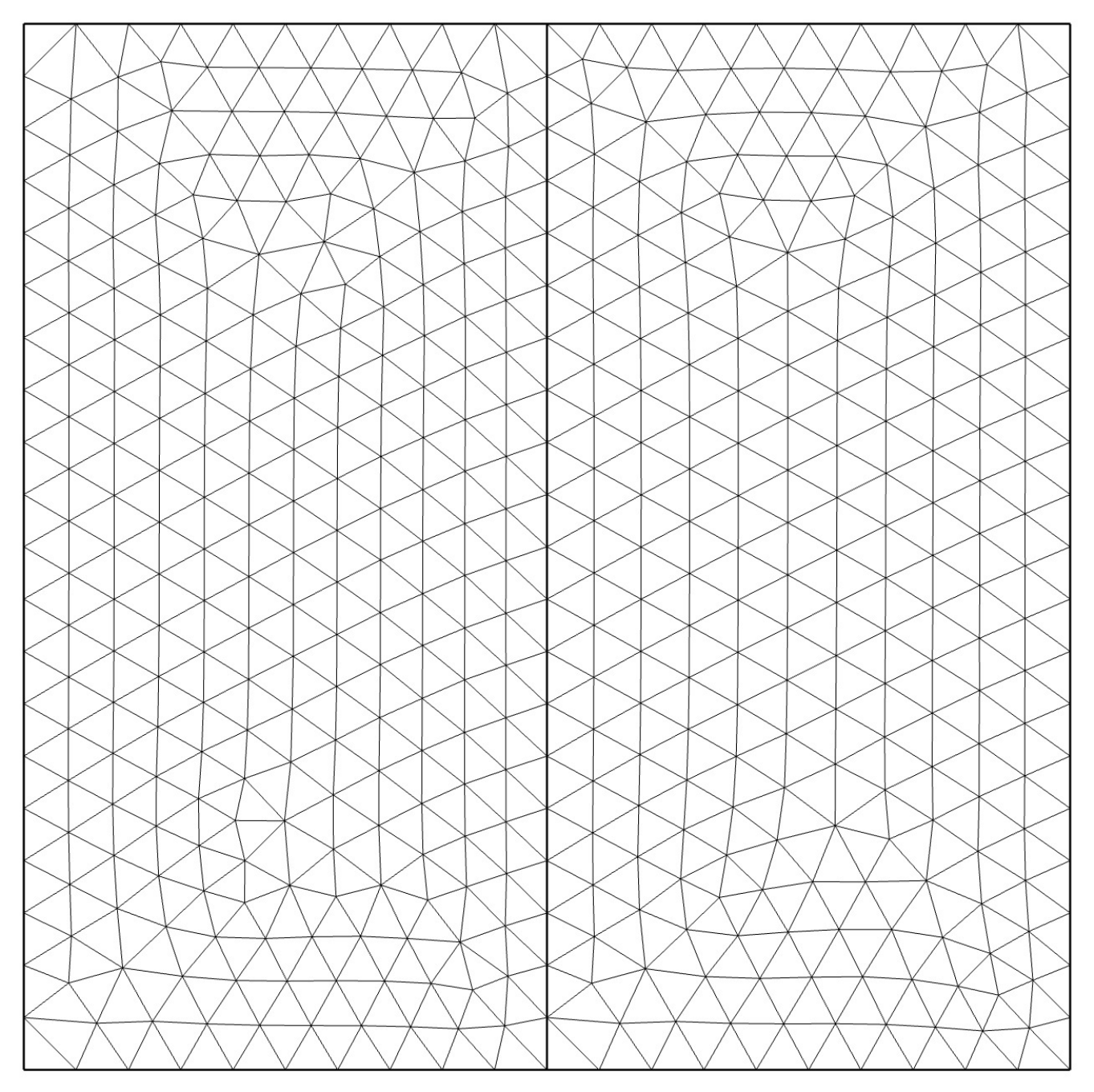}}
\subfigure{\includegraphics[width=0.3\textwidth,height=40mm
]{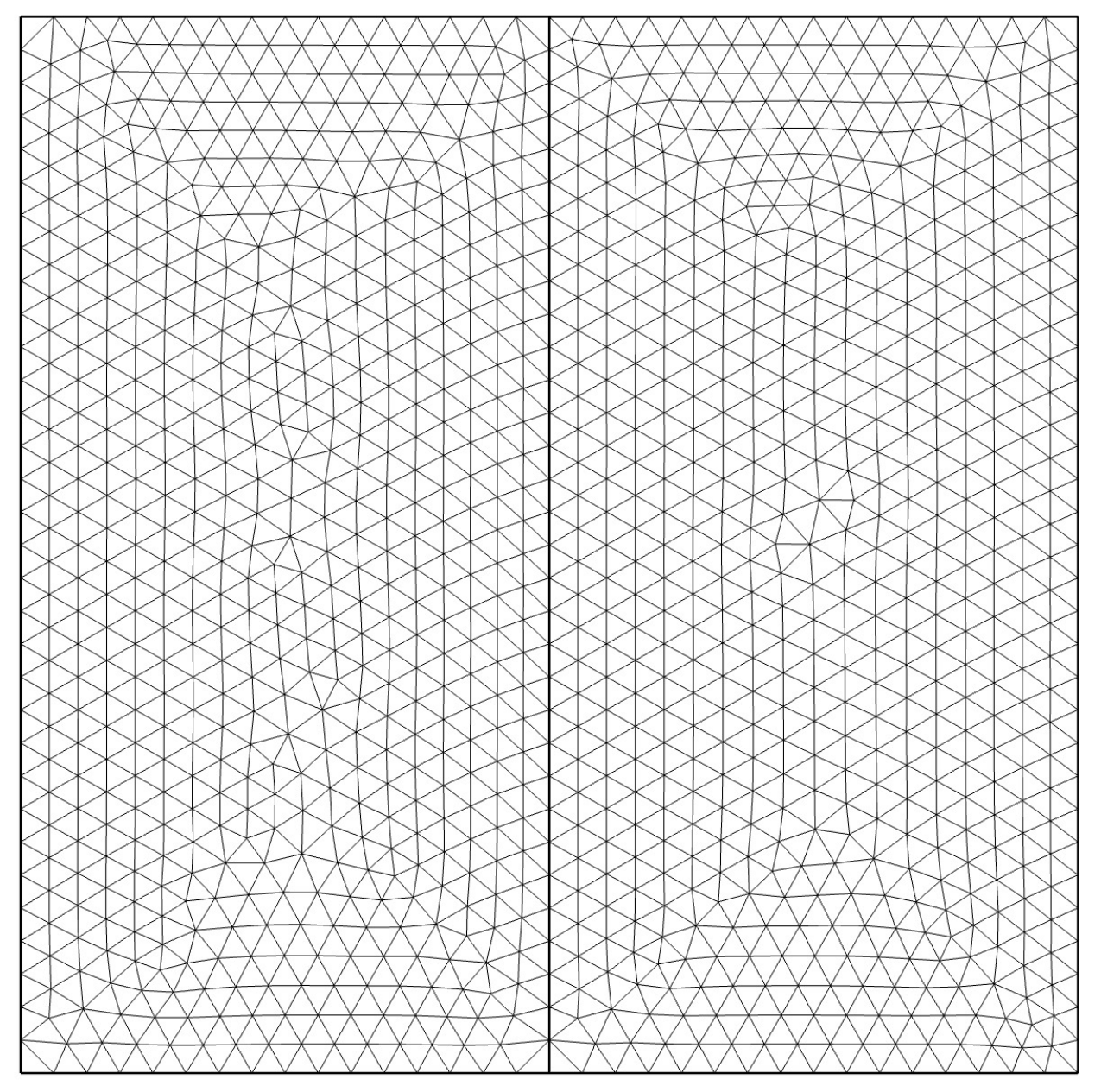}}
\caption{\footnotesize 
Unstructured triangular meshes generated by NETGEN 
with $\texttt{maxh}=1/20$ (left, Example 5.1) and $\texttt{maxh}=1/32$ (right, Example 5.2).}
\label{fig:meshcon}
\end{figure}

\noindent {\bf Example 5.1.} Energy stability test

In the FSI model \eqref{fluid_eq}-\eqref{Dyn_cond}, we take $\Omega=(0,1)\times(0,1)$, with the fluid subdomain $\Orf = (0, \tfrac{1}{2}) \times (0,1)$ and and the structure subdomain $\Ors = (\tfrac{1}{2}, 1) \times (0,1)$. The interface $\widehat\Gamma$ is located at the vertical line $x=\tfrac{1}{2}$. 
All  the physical parameters  are set as $1$, i.e. 
$$\rho^f= \mu^f=\rho^s=\mu^s=\lambda^s=\kappa^f=1,$$ 
and the initial conditions are   prescribed as follows:
\begin{align*}
	\u^f_0&=\left(-1000x ^ 3 y ^ 2  (2 y - 1) (x - 1) ^ 3 (y - 1) ^ 2, 1000x ^ 2 y ^ 3 (2 x - 1) (x - 1) ^ 2 (y - 1) ^ 3) \right)^\top,\\
	\ds_0&= \widehat \u^s_0=\bm 0.
\end{align*}
The initial fluid motion $\u^f_0$  generates a force on the structure via the interface  $\widehat\Gamma$ , causing structural deformation. The resulting displacement of the structure in turn alters the fluid domain, thereby establishing a fully two-way coupled FSI system.

We perform simulations with $k=2$ and  a fixed mesh size $h=1/20$  (cf. Figure \ref{fig:meshcon}) and consider a sequence of decreasing time steps: $\tau=0.02,0.005,0.001, 0.0002$. Figure~\ref{fig:ex1} plots the evolution of the discrete energy 
\begin{align*}
	E^{n}_h=\frac{1}{2}\|\u_h^{f,n}\|^2_{L^2({\mathcal T _{f,h}^n})}+\frac{1}{2} \|\widehat \u_h^{s,n}\|^2_{L^2(\widehat {\mathcal T}_{s,h}) }+\left (\|\widehat{\mathcal D}^{s,n} \|^2_{\widehat \V^{s}_{h,k}\times \underline{\widehat \V}^{s}_{h,k} }+\frac{1 }{2}\|\widehat\nabla\cdot {\widehat{\bm{d}}_h^{s,n} }\|^2_{L^2(\widehat {\mathcal T}_{s,h} )}\right )
\end{align*}
  for both  Scheme-C and Scheme-D at time steps $n=0,\dots,N$. We can see that both of two schemes produce numerical solutions with monotonically decreasing energy in all cases.
\begin{figure}[htbp]
\centering
\subfigure{\includegraphics[width=0.49\textwidth,
height=45mm]{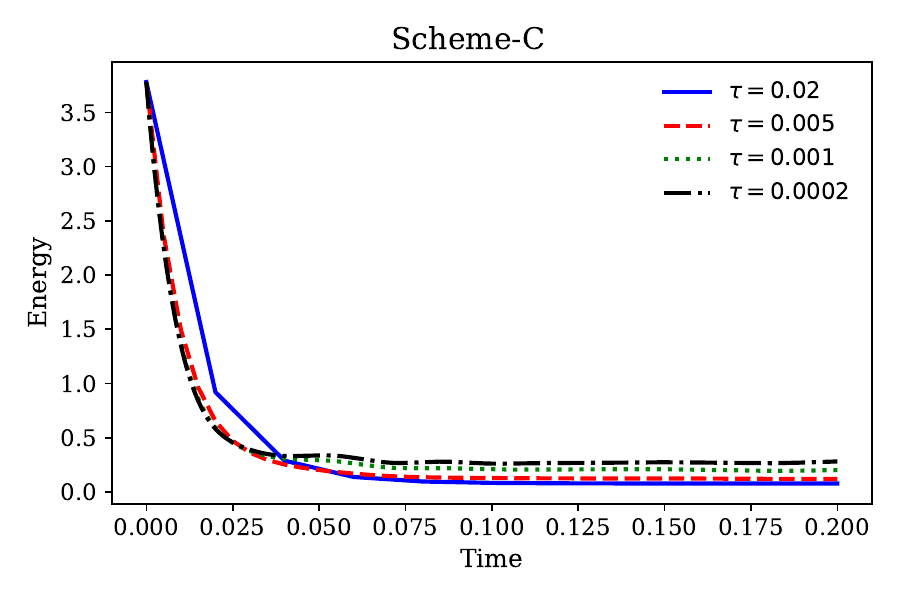}}
\subfigure{\includegraphics[width=0.49\textwidth,
height=45mm]{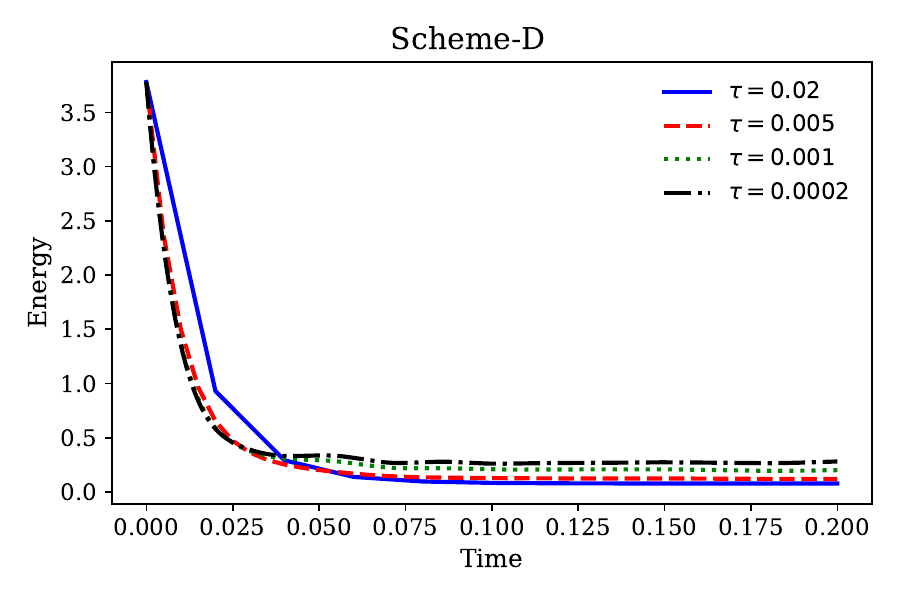}}
\caption{ \footnotesize Energy $E^{n}_h$ of  Scheme-C and Scheme-D with   $k=2$ and $h=1/20$. }
\label{fig:ex1}
\end{figure}

\noindent {\bf Example 5.2.} \ {Convergence test}

In this example, we investigate the accuracy of {Scheme-C}   and {Scheme-D}  when applied to a moving domain FSI problem. The domains $\Omega$, $\Orf$ and $\Ors$  are taken as same as those in Example 5.1. 

The exact solutions of the fluid velocity $\u^f$, the fluid pressure $p^f$, and the structure displacement $\ds$ are given as follows:
\begin{align*}
	\u^f&=(-x ^ 3 y ^ 2  (2 y - 1) (x - 1) ^ 3 (y - 1) ^ 2\sin(2 t), x ^ 2 y ^ 3 (2 x - 1) (x - 1) ^ 2 (y - 1) ^ 3)\sin(2t) )^\top,\\
	\ds&=(-x ^ 3 y ^ 2 (2 y - 1) (x - 1) ^ 3 (y - 1) ^ 2\sin^ 2(t), x ^ 2 y ^ 3  (2 x - 1) (x - 1) ^ 2 (y - 1) ^ 3 \sin^ 2(t))^\top,\\
	p^f&=(x - \tfrac{1}{2})^4 (y - 1)^4\sin(2t).
\end{align*}
We further define the exact displacement in the fluid domain as $\widehat{\bm{\eta}}^f = \ds$, and the exact fluid mesh velocity as $\widehat{\bm{w}}^f = \partial_t \widehat{\bm{\eta}}^f$. Based on these exact solutions, we compute the corresponding forcing terms $\mathbf{g}^f$, $\mathbf{g}^s$, and $\mathbf{g}^D$ for the Navier-Stokes equations \eqref{fluid_eq}, the linear elastodynamics equations \eqref{structure_eq}, and the fluid motion equations \eqref{mesh_equa}, respectively. Moreover, to ensure the continuity of normal stresses across the fluid-structure interface $\widehat\Gamma$, we additionally introduce a forcing term $\mathbf{g}^\Gamma=\widehat \J_f\widehat{\bm{\sigma}}^f\widehat {\bm F}_f^{-\top}\widehat{\bm n}^f-\widehat {\bm{\sigma}}^s\widehat{\bm n}^f$ on the interface $\widehat\Gamma$. 

To investigate the convergence histories for materials with different physical parameters and $k$, we consider the following five cases:
\begin{align*}
	&\text{Case\ 1}: \rho^f= \mu^f=\rho^s=\mu^s=\lambda^s=\kappa^f=1, \quad  k=1,2;\\
	&\text{Case\ 2}: \rho^f=\rho^s=10^{3}, \mu^f=10^{-3}, \mu^s=\lambda^s=\kappa^f=1, \quad  k=1,2;\\
	&\text{Case\ 3}: \rho^s=10^{3}, \mu^s=10^{2}, \lambda^s=10^{6}, \rho^f=\mu^f=\kappa^f=1, \quad  k=1,2;\\
	&\text{Case\ 4}: \rho^f= \rho^s=\mu^s=\lambda^s=\kappa^f=1,  \mu^f=10^{-6}, 10^{-8}, \quad  k=2.
\end{align*}
In particular, Case 3   corresponds to a nearly incompressible elastic structure ($ \lambda^s=10^{6}$), and Case  4  corresponds to very small fluid viscosity ($\mu^f=10^{-6}, 10^{-8}$). 

We conduct simulations on a series of uniform triangular meshes with mesh sizes $h=1/4$, $1/8,$ $1/16,$ $1/32$ (cf. Figure \ref{fig:meshcon}), and list the corresponding numerical results    in Tables 1-6, where the errors are defined as follows:
\begin{align*}
	&e_{\u}:=\frac{\|\u^{f,N}_h-\u^f(T)\|_{L^2( {\mathcal T}_{f,h}^{n})}}{\|\u^{f,N}_h\|_{L^2( {\mathcal T}_{f,h}^{n})}}+\frac{\|\widehat \u^{s,N}_h-\widehat \u^s(T)\|_{L^2(\widehat {\mathcal T}_{s,h})}}{\|\widehat \u^{s,N}_h\|_{L^2(\widehat {\mathcal T}_{s,h})}},\\
	&e_{\nabla\cdot \u^f}:=\|\nabla\cdot  \u^{f,N}_h\|_{L^2( {\mathcal T}_{f,h}^{n})}, \qquad e_{\widehat{\bm{\eta}}^f}:=\frac{\|{\widehat{\bm{\eta}}}^{f,N}_h-{\widehat{\bm{\eta}}}^f(T)\|_{L^2(\widehat  {\mathcal T}_{f,h})}}{\|{\widehat{\bm{\eta}}}^{f,N}_h\|_{L^2(\widehat  {\mathcal T}_{f,h})}},\\
	&e_{{\widehat{ \bm{d}}}^s}:=\frac{\|\widehat {\bm{d}}^{s,N}_h-\widehat {\bm{d}}^s(T)\|_{L^2(\widehat {\mathcal T}_{s,h})}}{\|\widehat {\bm{d}}^{s,N}_h\|_{L^2(\widehat {\mathcal T}_{s,h})}},\qquad  \ e_{p}:=\frac{\|p^{f,N}_h-p^f(T)\|_{L^2( {\mathcal T}_{f,h}^{n})}}{\|p^{f,N}_h\|_{L^2( {\mathcal T}_{f,h}^{n})}}
\end{align*}
\begin{align*}
&e_{\widehat \nabla {\widehat{\bm{\eta}}}^f}:=\frac{\|\widehat \nabla(\widehat{ {\bm{\eta}}}^{f,N}_h-\widehat{\bm{\eta}}^f(T))\|_{L^2(\widehat {\mathcal T}_{f,h})}}{\|\widehat \nabla \widehat {\bm{\eta}}^{f,N}_h\|_{L^2(\widehat  {\mathcal T}_{f,h})}},\qquad e_{\widehat \nabla {\widehat{ \bm{d}}}^s}:=\frac{\|\widehat \nabla(\widehat{ {\bm{d}}}^{s,N}_h- \widehat {\bm{d}}^s(T))\|_{L^2(\widehat {\mathcal T}_{s,h})}}{\|\widehat \nabla \widehat {\bm{d}}^{s,N}_h\|_{L^2(\widehat {\mathcal T}_{s,h})}},
\\
&e_{\nabla\u}:=\frac{\|\nabla(\u^{f,N}_h-\u^f(T))\|_{L^2( {\mathcal T}_{f,h}^{n})}}{\|\nabla\u^{f,N}_h\|_{L^2( {\mathcal T}_{f,h}^{n})}}+\frac{\|\widehat \nabla(\widehat \u^{s,N}_h-\widehat\u^s(T))\|_{L^2(\widehat {\mathcal T}_{s,h})}}{\|\widehat \nabla \widehat \u^{s,N}_h\|_{L^2(\widehat {\mathcal T}_{s,h})}}.
\end{align*}
As the expected optimal convergence rates are $\mathcal O(\tau + h^{k+1})$ for   $e_{\u} $, $e_{{\widehat{ \bm{d}}}^s}$ and $e_{\widehat{\bm{\eta}}^f}$,  and are $\mathcal O(\tau + h^{k})$ for $e_{\nabla\u}, e_{\widehat \nabla {\widehat{ \bm{d}}}^s}$ and $ e_{p}$,  in the computation  we take $\tau = h^{k+1}$ to ensure the optimal spatial accuracy.

Tables \ref{tab:2-1}-\ref{tab:2-3} (resp. Tables \ref{tab:2-4}-\ref{tab:2-6}) show the numerical results of  Scheme-C   and Scheme-D   with $k=1$ (resp. $k=2$) under various parameter configurations (Cases 1-3). The observed convergence rates are consistent with theoretical predictions: $e_{\u} $, $e_{{\widehat{ \bm{d}}}^s}$ and $e_{\widehat{\bm{\eta}}^f}$ are of     $(k+1)^{th}$-order convergence, and $e_{\nabla\u}, e_{\widehat \nabla {\widehat{ \bm{d}}}^s}$ and $ e_{p}$ are of   $k^{th}$-order convergence.  Notably,   the numerical results of  $e_{\nabla\cdot \u^f}$ in Tables~\ref{tab:2-1}-\ref{tab:2-6} is nearly zero with machine precision, which indicates that the obtained fluid velocity approximation is point-wise divergence-free, 
as is conformable to Theorem \ref{Div-free}.  It should be mentioned that most of the numerical results of the two schemes are almost the same (4 effective digits for each result).  In fact, as pointed out in Remark \ref{CD-equal}, Scheme-C and Scheme-D are   equivalent  if $\nabla\bm w_h^{f,n}=0$ for any $n$. This means that they may be very close when $|\nabla\bm w_h^{f,n}|$ is very small.

\begin{table}[htbp]
\centering
\tabcolsep 1.5mm {\footnotesize \caption{ Errors and convergence rates  for Case 1 with $k=1$ and $\tau=h^2$ at $T=0.3$.}
\label{tab:2-1}
\begin{tabular}{c|ccccccccccr}
\hline
\rule[0pt]{0pt}{10pt}
&$h$& $e_{\u}$ &$e_{p}$&$e_{\widehat{\bm{\eta}}^f}$&$e_{{\widehat{ \bm{d}}}^s}$&$e_{\nabla\cdot \u^f}$&$e_{\nabla\u}$&$e_{\widehat \nabla \widehat{\bm{\eta}}^f}$&$ e_{\widehat \nabla {\widehat{ \bm{d}}}^s}$&\\[2pt]
\hline
& 1/4 &1.687e-01& 4.612e-01& 2.877e-01&1.851e-01&4.641e-20&5.648e-01&5.819e-01& 5.625e-01  \\
{Scheme-C}&1/8 &3.177e-02& 2.360e-01& 6.434e-02&4.061e-02&1.661e-19&2.801e-01&2.938e-01& 2.816e-01  \\
&1/16&8.977e-03& 1.381e-01& 1.898e-02&9.861e-03&3.799e-19&1.450e-01&1.600e-01& 1.377e-01\\
&1/32&2.202e-03& 6.736e-02& 4.700e-03&2.405e-03&7.178e-19&7.242e-02&8.002e-02& 6.863e-02  \\
& Order& 2.087& 0.925& 1.979& 2.089& --& 0.988& 0.954& 1.012&\\[2pt]
\hline
& 1/4 &1.687e-01& 4.612e-01& 2.877e-01&1.851e-01&4.819e-20&5.648e-01&5.819e-01& 5.625e-01   \\
{Scheme-D}&1/8 &3.177e-02& 2.360e-01& 6.434e-02&4.061e-02&1.350e-19&2.801e-01&2.938e-01& 2.816e-01 \\
&1/16&8.977e-03& 1.381e-01& 1.898e-02&9.861e-03&3.290e-19&1.450e-01&1.600e-01& 1.377e-01\\
&1/32& 2.202e-03& 6.736e-02& 4.700e-03&2.405e-03&6.544e-19&7.242e-02&8.002e-02& 6.863e-02 \\
& Order&2.087& 0.925& 1.979& 2.089& --& 0.988& 0.954& 1.012& \\[2pt]
\hline
\end{tabular}}
\end{table}

\begin{table}[htbp]
\begin{center}
\tabcolsep 1.5mm {\footnotesize \caption{ Errors and convergence rates  for Case 2 with $k=1$ and $\tau=h^2$ at $T=0.3$.}
\label{tab:2-2}
\begin{tabular}{c|ccccccccccr}
\hline
\rule[0pt]{0pt}{10pt}
&$h$& $e_{\u}$ &$e_{p}$&$e_{\widehat{\bm{\eta}}^f}$&$e_{{\widehat{ \bm{d}}}^s}$&$e_{\nabla\cdot \u^f}$&$e_{\nabla\u}$&$e_{\widehat \nabla \widehat{\bm{\eta}}^f}$&$ e_{\widehat \nabla {\widehat{ \bm{d}}}^s}$&\\[2pt]
\hline
& 1/4 & 1.776e-01& 4.611e-01& 3.164e-01&2.403e-01&2.779e-20&5.603e-01&2.318e-01& 2.209e-01   \\
{Scheme-C}&1/8 &3.063e-02& 2.361e-01& 6.466e-02&6.435e-02&1.581e-19&2.708e-01&2.937e-01& 2.772e-01  \\
&1/16&8.697e-03& 1.381e-01& 1.922e-02&1.527e-02&3.251e-19&1.412e-01&1.600e-01& 1.340e-01\\
&1/32& 2.163e-03& 6.737e-02& 4.751e-03&3.611e-03&6.316e-19&7.128e-02&8.001e-02& 6.544e-02\\
& Order& 2.083& 0.938& 1.973& 2.038&  --& 0.970& 0.953& 1.033&\\[2pt]
\hline
& 1/4 & 1.646e-01& 4.734e-01& 2.877e-01&2.499e-01&4.929e-20&5.363e-01&5.810e-01& 5.609e-01  \\
{Scheme-D}&1/8 & 3.063e-02& 2.361e-01& 6.466e-02&6.435e-02&1.374e-19&2.708e-01&2.937e-01& 2.772e-01\\
&1/16&8.697e-03& 1.381e-01& 1.922e-02&1.527e-02&3.267e-19&1.412e-01&1.600e-01& 1.340e-01\\
&1/32&2.163e-03& 6.737e-02& 4.750e-03&3.611e-03&6.367e-19&7.128e-02&8.001e-02& 6.544e-02 \\
& Order& 2.083& 0.938& 1.973& 2.038& --&  0.970& 0.953& 1.033&\\[2pt]
\hline
\end{tabular}}
\end{center}
\end{table}

\begin{table}[htbp]
\begin{center}
\tabcolsep 1.5mm {\footnotesize\caption{ Errors and convergence rates  for Case 3 with $k=1$ and $\tau=h^2$ at $T=0.3$.}
\label{tab:2-3}
\begin{tabular}{c|ccccccccccr}
\hline
\rule[0pt]{0pt}{10pt}
&$h$& $e_{\u}$ &$e_{p}$&$e_{\widehat{\bm{\eta}}^f}$&$e_{{\widehat{ \bm{d}}}^s}$&$e_{\nabla\cdot \u^f}$&$e_{\nabla\u}$&$e_{\widehat \nabla \widehat{\bm{\eta}}^f}$&$ e_{\widehat \nabla {\widehat{ \bm{d}}}^s}$&\\[2pt]
\hline
& 1/4 &1.237e+00& 5.492e-01& 3.290e-01&1.939e-01&1.379e-19&8.433e-01&6.249e-01& 5.346e-01    \\

{Scheme-C}&1/8 &1.795e-01& 2.556e-01& 6.758e-02&5.441e-02&1.538e-19&3.700e-01&2.994e-01& 2.787e-01  \\
&1/16&3.500e-02& 1.419e-01& 1.928e-02&1.361e-02&3.526e-19&1.636e-01&1.608e-01& 1.371e-01  \\
&1/32& 7.083e-03& 6.821e-02& 4.742e-03&3.425e-03&6.652e-19&7.663e-02&8.011e-02& 6.845e-02 \\
& Order&2.483& 1.003& 2.039& 1.941& --& 1.153& 0.988& 0.988& \\[2pt]
\hline
& 1/4 & 1.237e+00& 5.492e-01& 3.290e-01&1.939e-01&1.193e-19&8.433e-01&6.249e-01& 5.346e-01  \\
{Scheme-D}&1/8 &1.795e-01& 2.556e-01& 6.758e-02&5.441e-02&1.394e-19&3.700e-01&2.994e-01& 2.787e-01  \\
&1/16&3.500e-02& 1.419e-01& 1.928e-02&1.361e-02&3.354e-19&1.636e-01&1.608e-01& 1.371e-01 \\
&1/32& 7.083e-03& 6.821e-02& 4.742e-03&3.425e-03&6.748e-19&7.663e-02&8.011e-02& 6.845e-02\\
& Order&2.483& 1.003& 2.039& 1.941& --& 1.153& 0.988& 0.988& \\[2pt]
\hline
\end{tabular}}
\end{center}
\end{table}

\begin{table}[htbp]
\begin{center}
\tabcolsep 1.5mm {\footnotesize\caption{ Errors and convergence rates  for Case 1 with $k=2$ and $\tau=h^3$ at $T=0.1$.}
\label{tab:2-4}
\begin{tabular}{c|ccccccccccr}
\hline
\rule[0pt]{0pt}{10pt}
&$h$& $e_{\u}$ &$e_{p}$&$e_{\widehat{\bm{\eta}}^f}$&$e_{{\widehat{ \bm{d}}}^s}$&$e_{\nabla\cdot \u^f}$&$e_{\nabla\u}$&$e_{\widehat \nabla \widehat{\bm{\eta}}^f}$&$ e_{\widehat \nabla {\widehat{ \bm{d}}}^s}$&\\[2pt]
\hline
& 1/4 &5.837e-02& 1.026e-01& 8.012e-02&1.420e-01&2.187e-20&2.446e-01&2.662e-01& 3.022e-01   \\
{Scheme-C}&1/8&5.584e-03& 2.542e-02& 9.107e-03&1.743e-02&5.630e-20&5.330e-02&5.555e-02& 5.776e-02 \\
&1/16&7.250e-04& 8.430e-03& 1.248e-03&2.177e-03&1.101e-19&1.368e-02&1.487e-02& 1.330e-02 \\
&1/32&8.303e-05& 1.996e-03& 1.538e-04&2.722e-04&2.222e-19&3.315e-03&3.575e-03& 3.184e-03  \\
& Order & 3.153 & 1.895 & 3.008 & 3.009 & -- & 2.068 & 2.073 & 2.190 \\[2pt]
\hline
& 1/4 & 5.837e-02& 1.026e-01& 8.012e-02&1.420e-01&1.899e-20&2.446e-01&2.662e-01& 3.022e-01  \\
{Scheme-D}&1/8 &5.584e-03& 2.542e-02& 9.107e-03&1.743e-02&5.458e-20&5.330e-02&5.555e-02& 5.776e-02 \\
&1/16&7.250e-04& 8.430e-03& 1.248e-03&2.177e-03&1.134e-19&1.368e-02&1.487e-02& 1.330e-02 \\
&1/32&8.303e-05& 1.996e-03& 1.538e-04&2.722e-04&2.255e-19&3.315e-03&3.575e-03& 3.184e-03 \\
& Order & 3.153 & 1.895 & 3.008 & 3.009 & -- & 2.068 & 2.073 & 2.190 \\[2pt]
\hline
\end{tabular}}
\end{center}
\end{table}

\begin{table}[htbp]
\centering
\tabcolsep 1.5mm {\footnotesize\caption{ Errors and convergence rates  for Case 2 with $k=2$ and $\tau=h^3$ at $T=0.1$.}
\label{tab:2-5}
\begin{tabular}{c|ccccccccccr}
\hline
\rule[0pt]{0pt}{10pt}
&$h$& $e_{\u}$ &$e_{p}$&$e_{\widehat{\bm{\eta}}^f}$&$e_{{\widehat{ \bm{d}}}^s}$&$e_{\nabla\cdot \u^f}$&$e_{\nabla\u}$&$e_{\widehat \nabla \widehat{\bm{\eta}}^f}$&$ e_{\widehat \nabla {\widehat{ \bm{d}}}^s}$&\\[2pt]
\hline
& 1/4 &4.532e-02& 1.604e-01& 8.156e-02&1.693e-01&2.110e-20&2.627e-01&2.682e-01& 3.634e-01   \\
{Scheme-C}&1/8 & 4.152e-03& 2.679e-02& 9.186e-03&1.973e-02&5.201e-20&5.530e-02&5.558e-02& 6.519e-02 \\
&1/16&6.255e-04& 8.505e-03& 1.254e-03&2.449e-03&1.146e-19&1.406e-02&1.487e-02& 1.452e-02 \\
&1/32&7.919e-05& 2.007e-03& 1.541e-04&3.061e-04&2.221e-19&3.345e-03&3.575e-03& 3.361e-03 \\
& Order & 3.054 & 2.107 & 3.016 & 3.037 & -- & 2.098 & 2.076 & 2.252&\\[2pt]
\hline
& 1/4 &4.532e-02& 1.604e-01& 8.156e-02&1.693e-01&2.053e-20&2.627e-01&2.682e-01& 3.634e-01    \\
{Scheme-D}&1/8 &4.152e-03& 2.679e-02& 9.186e-03&1.973e-02&5.659e-20&5.530e-02&5.558e-02& 6.519e-02 \\
&1/16&6.255e-04& 8.505e-03& 1.254e-03&2.449e-03&1.116e-19&1.406e-02&1.487e-02& 1.452e-02 \\
&1/32&7.919e-05& 2.007e-03& 1.541e-04&3.061e-04&2.196e-19&3.345e-03&3.575e-03& 3.361e-03 \\
& Order & 3.054 & 2.107 & 3.016 & 3.037 & -- & 2.098 & 2.076 & 2.252&\\[2pt]
\hline
\end{tabular}}
\end{table}

\begin{table}[htbp]
\centering
\tabcolsep 1.5mm {\footnotesize \caption{ Errors and convergence rates  for Case 3 with $k=2$ and $\tau=h^3$ at $T=0.1$.}
\label{tab:2-6}
\begin{tabular}{c|ccccccccccr}
\hline
\rule[0pt]{0pt}{10pt}
&$h$& $e_{\u}$ &$e_{p}$&$e_{\widehat{\bm{\eta}}^f}$&$e_{{\widehat{ \bm{d}}}^s}$&$e_{\nabla\cdot \u^f}$&$e_{\nabla\u}$&$e_{\widehat \nabla \widehat{\bm{\eta}}^f}$&$ e_{\widehat \nabla {\widehat{ \bm{d}}}^s}$&\\[2pt]
\hline
& 1/4 &8.118e-02& 1.132e-01& 9.708e-02&1.669e-01&1.830e-20&2.701e-01&2.814e-01& 3.333e-01   \\
{Scheme-C}&1/8 & 8.755e-03& 2.696e-02& 1.098e-02&1.962e-02&5.061e-20&5.819e-02&5.660e-02& 5.919e-02 \\
&1/16&1.099e-03& 8.593e-03& 1.452e-03&2.447e-03&1.088e-19&1.438e-02&1.492e-02& 1.348e-02 \\
&1/32&1.439e-04& 2.016e-03& 1.794e-04&3.058e-04&2.219e-19&3.413e-03&3.578e-03& 3.220e-03 \\
& Order & 3.047 & 1.937 & 3.027 & 3.031 & -- & 2.102 & 2.099 & 2.231& \\[2pt]
\hline
& 1/4 & 8.118e-02& 1.132e-01& 9.708e-02&1.669e-01&2.164e-20&2.701e-01&2.814e-01& 3.333e-01   \\
{Scheme-D}&1/8 &8.755e-03& 2.696e-02& 1.098e-02&1.962e-02&5.451e-20&5.819e-02&5.660e-02& 5.919e-02 \\
&1/16&1.099e-03& 8.593e-03& 1.452e-03&2.447e-03&1.140e-19&1.438e-02&1.492e-02& 1.348e-02 \\
&1/32&1.439e-04& 2.016e-03& 1.794e-04&3.058e-04&2.231e-19&3.413e-03&3.578e-03& 3.220e-03  \\
& Order & 3.047 & 1.937 & 3.027 & 3.031 & -- & 2.102 & 2.099 & 2.231&\\[2pt]
\hline
\end{tabular}}
\end{table}

For incompressible fluid flow problems it is known that the divergence-free property of the discrete fluid velocity is key  to the pressure-robustness of the used  divergence-free  numerical method  \cite{Garc, John}. To  illustrate this effect for our proposed methods,  we compute Case 4, where  the fluid  viscosity parameter  $\nu^f = 10^{-6}, 10^{-8}$, and list the numerical results of $e_u$ and $e_{\nabla u}$  in Table \ref{tab:2-7}.    For comparison, we also compute the conforming finite element scheme that applies the P2-P1 Taylor--Hood element to discretize the fluid velocity-pressure  pair and  the continuous P2 element to discretize  the structural velocity and displacement. Due to   enhancing the fluid  incompressibility      only
weakly, this scheme (abbr. P2-P1 Taylor--Hood discretization)    does not yield the divergence-free velocity approximation.

From Table \ref{tab:2-7}  we can see that the velocity errors of  the   Taylor--Hood discretization  on each fixed mesh resolution increase significantly  as the viscosity $\nu^f$ decreases.  This means that this scheme  is not pressure-robust.
In contrast to  the   Taylor--Hood discretization,  both  Scheme-C and Scheme-D are pressure-robust and  give almost uniformly accurate numerical results at $\nu^f = 10^{-6}, 10^{-8}$.

\begin{table}[htbp]
\centering
\footnotesize
\renewcommand{\arraystretch}{1.2}
\caption{Velocity errors for different viscosity parameters $\nu^f$ at $T=0.1$ with $\tau=h^3$. }
\label{tab:2-7}
\medskip
\textbf{P2--P1 Taylor--Hood discretization}
\medskip
\begin{tabular}{c|cc|cc|cc|cc}
\hline 
$h$ 
& \multicolumn{2}{c|}{$\nu^f = 1$}
& \multicolumn{2}{c|}{$\nu^f = 10^{-2}$}
& \multicolumn{2}{c|}{$\nu^f = 10^{-4}$} 
& \multicolumn{2}{c}{$\nu^f = 10^{-6}$}\\
\cline{2-9}
& $e_u$ & $e_{\nabla u}$
& $e_u$ & $e_{\nabla u}$
& $e_u$ & $e_{\nabla u}$
& $e_u$ & $e_{\nabla u}$ \\
\hline
$1/4$  & 9.267e-02 & 2.890e-01 & 1.567e+00 & 2.864e+00 & 2.311e+00 & 4.313e+00 & 2.322e+00 & 4.335e+00 \\
$1/8$  & 9.945e-03 & 6.534e-02 & 4.599e-01 & 1.617e+00 & 1.477e+00 & 6.169e+00 & 1.515e+00 & 6.358e+00 \\
$1/16$ & 1.293e-03 & 1.655e-02 & 7.461e-02 & 5.171e-01 & 7.690e-01 & 6.199e+00 & 8.403e-01 & 6.820e+00 \\
$1/32$ & 1.343e-04 & 3.784e-03 & 6.566e-03 & 9.205e-02 & 2.051e-01 & 3.430e+00 & 2.906e-01 & 4.995e+00 \\
\hline
\end{tabular}

\medskip
\textbf{Scheme-C and Scheme-D ($k=2$)}
\medskip
\begin{tabular}{c|c|cccc|cccc}
\hline
 & $h$
 & \multicolumn{4}{c|}{Scheme-C}
 & \multicolumn{4}{c}{Scheme-D} \\
\cline{3-10}
 & 
 & $e_u$ & order & $e_{\nabla u}$ & order
 & $e_u$ & order & $e_{\nabla u}$ & order \\
\hline
$\nu^f= 10^{-6}$ & $1/4$  & 7.495e-02 & --   & 3.070e-01 & --   & 7.495e-02 & --   & 3.070e-01 & --   \\
          & $1/8$  & 1.255e-02 & 2.58 & 7.742e-02 & 1.99 & 1.255e-02 & 2.58 & 7.742e-02 & 1.99 \\
          & $1/16$ & 4.130e-03 & 1.60 & 3.625e-02 & 1.09 & 4.130e-03 & 1.60 & 3.625e-02 & 1.09 \\
          & $1/32$ & 1.481e-03 & 1.48 & 2.175e-02 & 0.74 & 1.481e-03 & 1.48 & 2.175e-02 & 0.74 \\
\hline
$\nu^f=10^{-8}$ & $1/4$  & 2.193e+00 & --   & 4.969e+00 & --   & 2.194e+00 & --   & 4.971e+00 & --   \\
          & $1/8$  & 3.911e-02 & 5.81 & 1.491e-01 & 5.06 & 3.911e-02 & 5.81 & 1.491e-01 & 5.06 \\
          & $1/16$ & 7.645e-03 & 2.35 & 6.483e-02 & 1.20 & 7.645e-03 & 2.35 & 6.484e-02 & 1.20 \\
          & $1/32$ & 1.487e-03 & 2.36 & 2.190e-02 & 1.57 & 1.487e-03 & 2.36 & 2.190e-02 & 1.57 \\
\hline
\end{tabular}
\end{table}

\noindent {\bf Example 5.3.} \ {Flow past a cylinder with a flexible bar}

In our final example, we assess the performance of the two proposed numerical schemes with $k=3$ on the classical benchmark problem introduced by Turek and Hron \cite{Turek2006,Turek2010}, which is based on the well-known configuration of incompressible channel flow past a rigid cylinder. The original setup involves a two-dimensional incompressible flow interacting with a nonlinearly elastic bar attached to the downstream side of the cylinder. In our study, we retain the same geometric and flow configuration but replace the nonlinear elasticity with a linear elastic model, aiming to investigate whether oscillatory behavior persists under this simplified structural assumption. The benchmark geometry consists of a slightly asymmetric channel with a circular cylinder mounted near the inlet.  For the FSI configuration, an elastic bar  is affixed to the downstream side of the cylinder.  The detailed geometric specifications are illustrated in Figure~\ref{fig:THgeo}.

\begin{figure}[htbp]
\centering
\subfigure{\includegraphics[width=0.98\textwidth,
height=40mm]{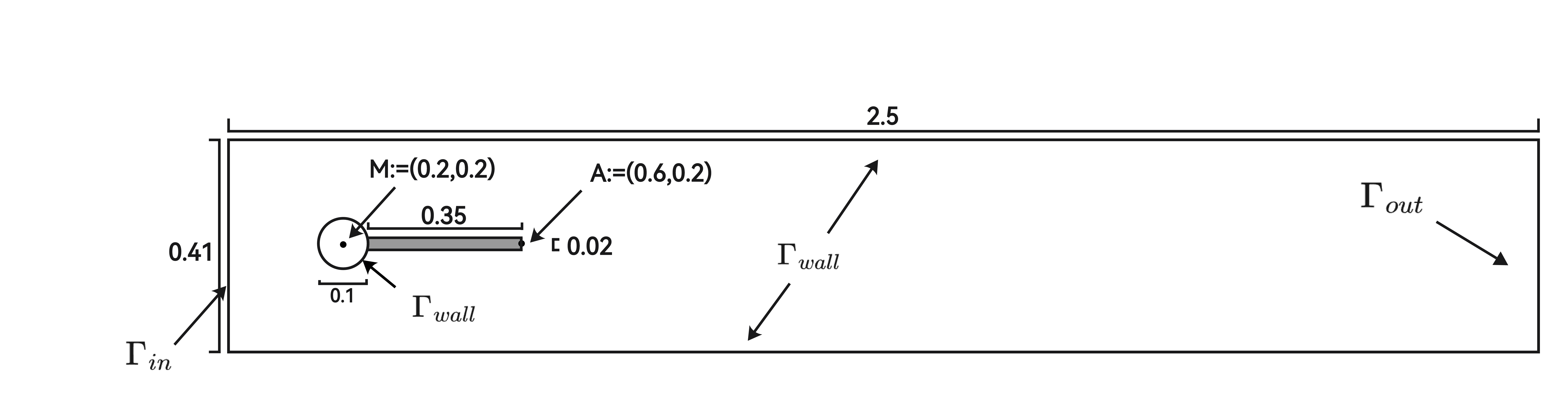}}
\caption{ \footnotesize Geometric configuration of the benchmark problem.}
\label{fig:THgeo}
\end{figure}

The fluid domain is governed by the incompressible Navier-Stokes equations~\eqref{fluid_eq} with zero external forcing $\bm g^f=\mathbf 0$, while the structure is modeled using the linear elasticity equations~\eqref{structure_eq}. The fluid-structure interaction is enforced via the coupling conditions~\eqref{Kine_cond}-\eqref{Dyn_cond} on the interface $\Gamma(t)$. The following boundary conditions are applied:

\begin{itemize}
  \item A time-dependent parabolic inflow profile is prescribed at the inlet of the fluid channel:
  \begin{align*}
  	 \mathbf{u}^f(0, y, t) =
  \begin{cases}
  \mathbf{u}^f(0, y) \cdot \dfrac{1 - \cos\left( \frac{\pi}{2} t \right)}{2}, & \text{if}\ t < 2, \\
  \mathbf{u}^f(0, y), & \text{otherwise},
  \end{cases}
  \end{align*}
  where the steady inflow profile is given by
  \begin{align*}
  	\mathbf{u}^f(0, y) = 1.5\, \bar{U} \cdot \frac{4.0}{0.1681} \cdot y(0.41 - y).
  \end{align*}
  \item A stress-free boundary condition $\bm{\sigma}^f \mathbf{n} = \mathbf{0}$ is imposed at the outlet.
  \item A no-slip condition is enforced on all other fluid boundaries, i.e., $ \mathbf{u}^f = 0 $. On the structure, a clamped boundary condition $\widehat{ \bm{d}}^s = \widehat {\u}^s = 0 $ is applied on the fixed boundaries.
\end{itemize}
Two test cases with time-periodic solutions are considered, denoted as FSI2 and FSI3 in \cite{Turek2006}. The associated material parameters are presented in Table~\ref{tab:para}. One of the quantities used for comparison is the displacement of control point A, located at the free end of the elastic flag (see Figure~\ref{fig:THgeo}). 
The computational mesh employed in the simulations is generated automatically from the geometric model using NETGEN \cite{Sch1997}, and consists of 1,992 triangular elements (see Figure~\ref{fig:THmesh}). The time step size is taken as $\tau = 0.001$.

To assess the performance of the two proposed schemes, we also compute, for comparison,   the continuous Galerkin (CG)  method where the fluid velocity-pressure pair is discretized using the $P_3-P_{2}$ Taylor-Hood element and  the structural displacement /velocity are approximated by the continuous $P_3$  element.
 The $P_3$ curved element  is adopted in the vicinity of the cylinder to better capture the curved boundary geometry.  For the corresponding nonlinear systems, the Newton’s method is employed with a stopping criterion based on a residual tolerance of $10^{-6}$. At each Newton iteration, the resulting linear system is solved using a sparse direct solver.

\begin{table}[htbp]
\centering
\renewcommand{\arraystretch}{1.4}  
\tabcolsep=2mm
\tabcolsep 2mm {\footnotesize\caption{ Parameter settings for the two test cases.}
\label{tab:para}
\begin{tabular}{c|c|c|c|c|c|c}
\hline
Parameter       & $\rho^s [10^3 \, \frac{\text{kg}}{\text{m}^3}]$ & $\lambda^s [10^6 \, \frac{\text{kg}}{\text{m} \cdot \text{s}^2}]$ & $\mu^s [10^6 \, \frac{\text{kg}}{\text{m} \cdot \text{s}^2}]$ & $\rho^f [10^3 \, \frac{\text{kg}}{\text{m}^3}]$ & $\mu^f [10^{-3}\frac{\text{kg}}{\text{m}\cdot\text {s}}]$ & $\bar{U} [\frac{\text{m}}{\text{s}}]$ \\ \hline
FSI2 & 10     & 2.0   & 0.5   & 1  & 1    & 1       \\ \hline
FSI3 & 1      & 8.0   & 2.0   & 1  & 1    & 2       \\ \hline
\end{tabular}}
\end{table}

\begin{figure}[htbp]
\centering
\subfigure{\includegraphics[width=0.82\textwidth,
height=20mm]{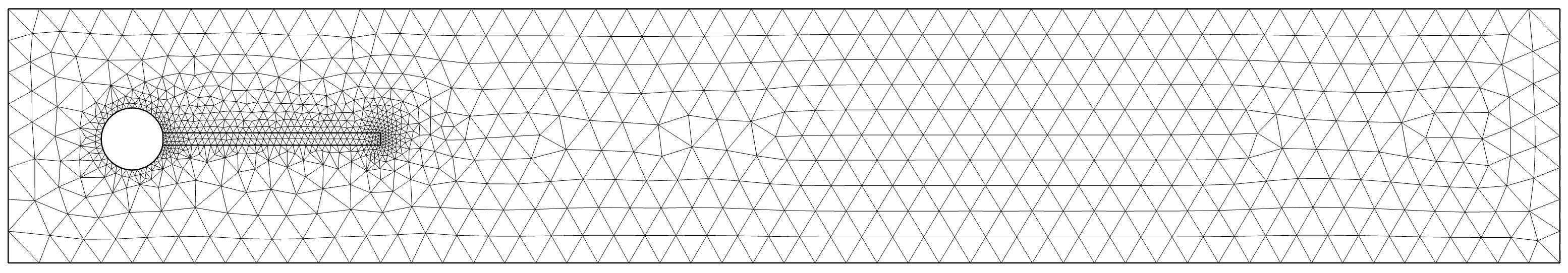}}
\caption{ \scriptsize Mesh with 1992 elements. }
\label{fig:THmesh}
\end{figure}

\begin{figure}[htbp]
\centering
\subfigure{\includegraphics[width=0.4\textwidth,
height=40mm]{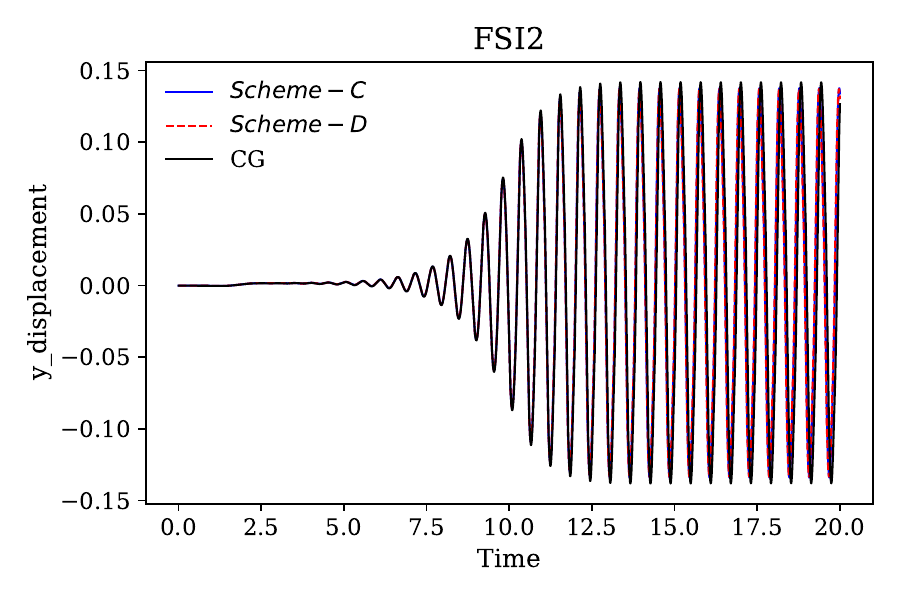}}\quad 
\subfigure{\includegraphics[width=0.4\textwidth,
height=40mm]{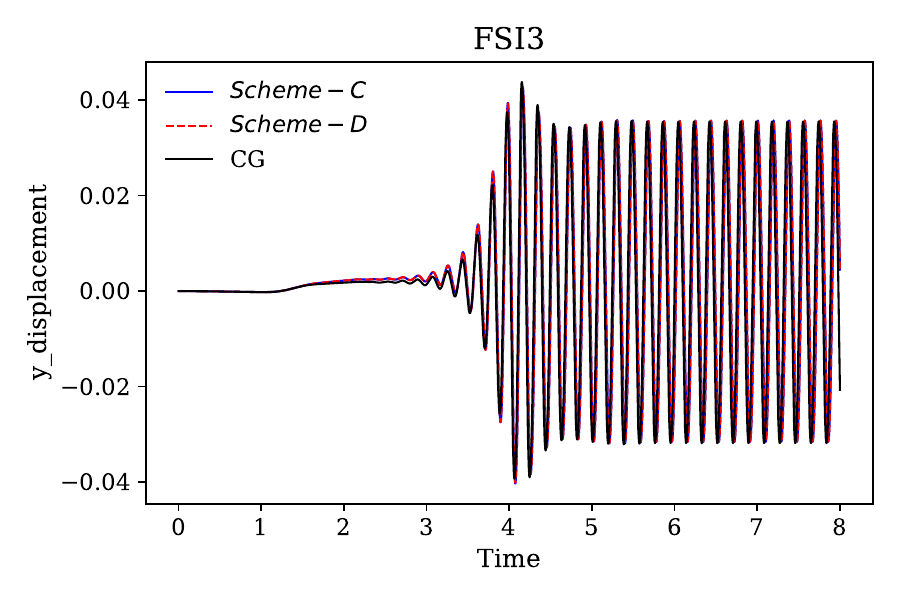}}
\caption{ \footnotesize y-component of displacements at point A for FSI2 (left) and FSI3 (right) with  $k = 3$ and  $\tau=0.001$. }
\label{fig:THdis}
\end{figure}

For the FSI2 case, which involves apparent deformations of the elastic beam, the simulation is carried out up to the final time $T = 20$.  As shown in Figure~\ref{fig:THdis} (left), the discrete solutions exhibit noticeable oscillations after $t = 12$, and the results obtained from the two proposed schemes remain in close agreement with those from the CG method. Figures \ref{fig:THC2} and \ref{fig:THD2} demonstrate the numerical velocity fields in a part of the computational domain at two time instants, $t=18.5s$ and $t=19.5s$, obtained by Scheme-C and Scheme-D, respectively.

For the FSI3 case, the beam undergoes less severe deformations, yet the higher fluid velocity leads to faster oscillations.  The simulation is performed up to $T = 8$ and,  as illustrated in Figure~\ref{fig:THdis} (right), significant oscillatory behavior emerges after $t = 5$.  Nevertheless, the solutions produced by the two proposed schemes still closely match those obtained using the CG method. In addition, the numerical velocity fields   obtained by Scheme-C and Scheme-D at $t=7s$ and $t=7.5s$  are shown in Figures \ref{fig:THC3} and \ref{fig:THD3},   respectively.

\begin{figure}[htbp]
    \centering
    \includegraphics[width=0.45\textwidth]{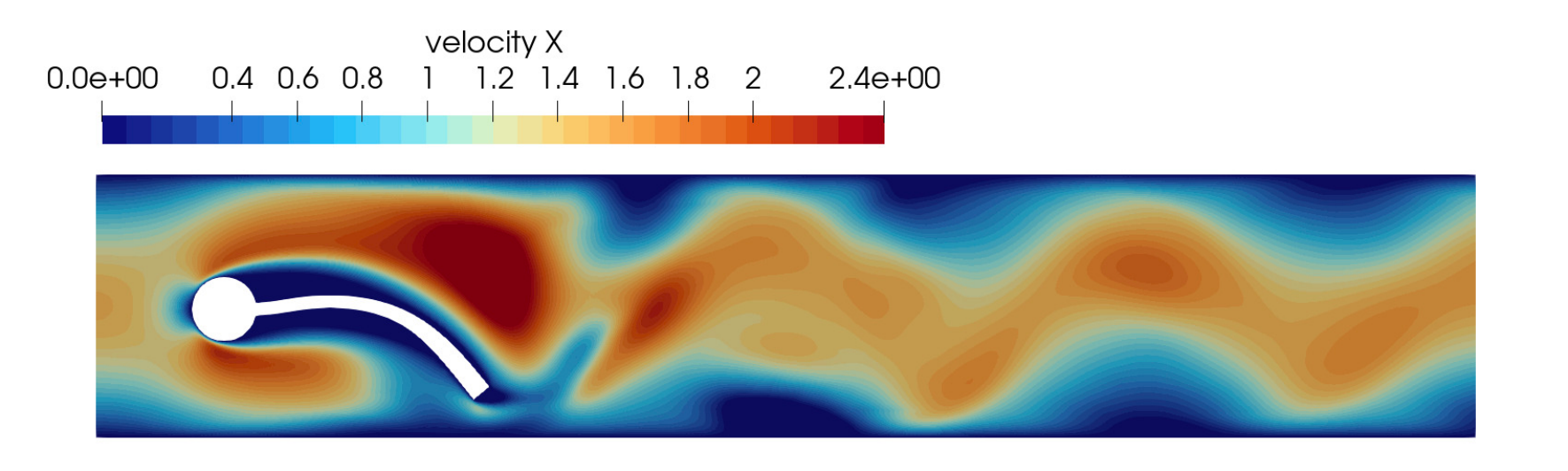}
    \includegraphics[width=0.45\textwidth]{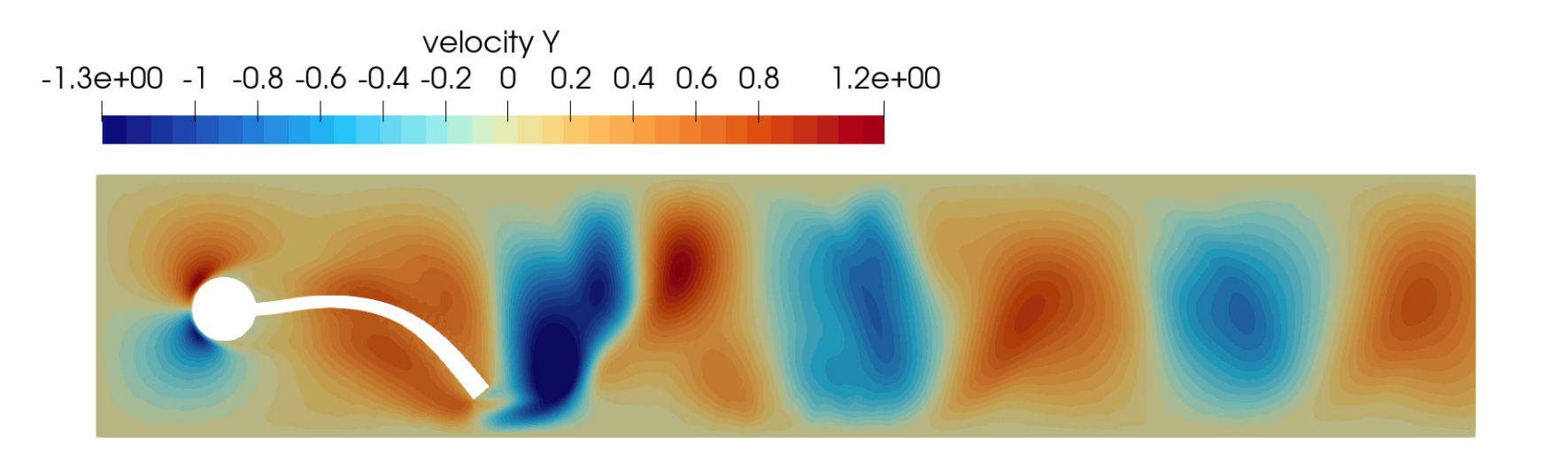}

    \includegraphics[width=0.45\textwidth]{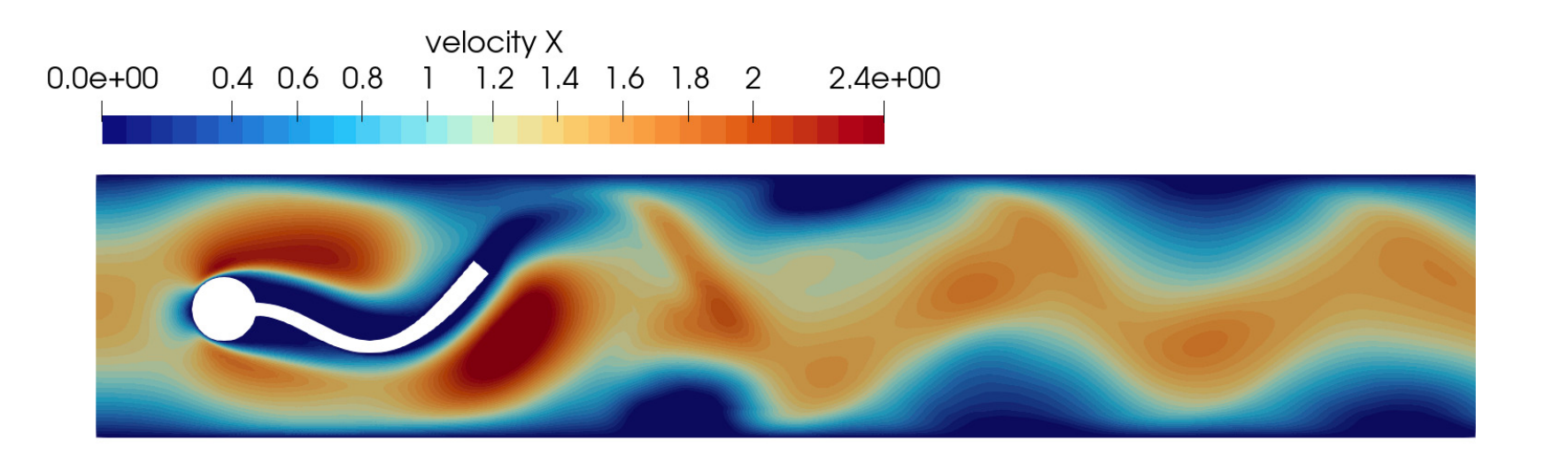}
    \includegraphics[width=0.45\textwidth]{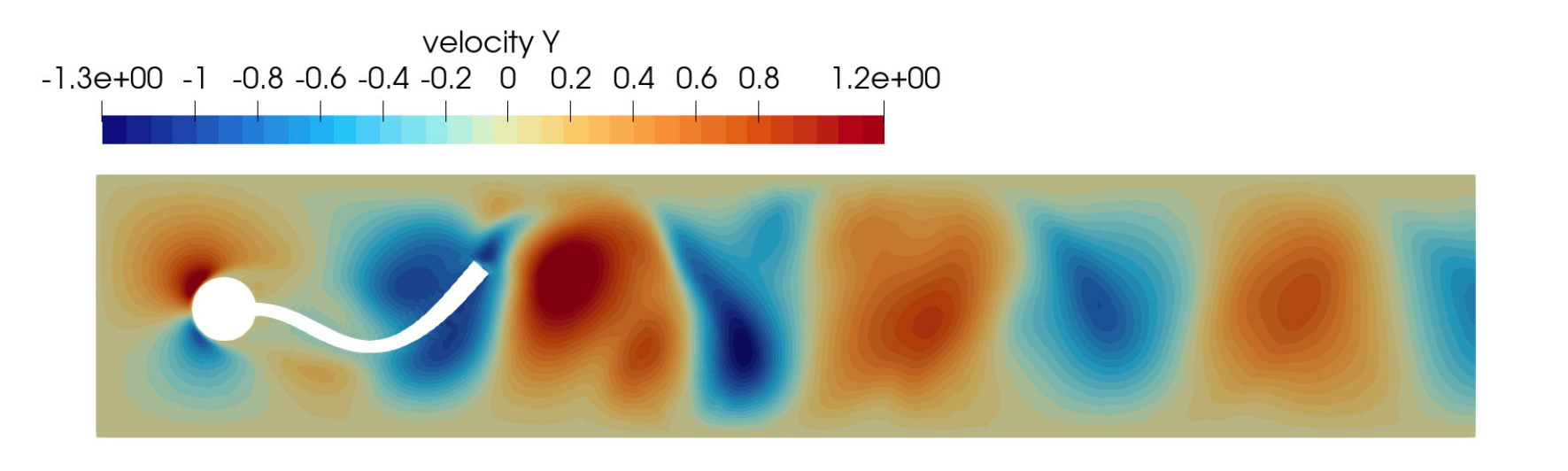}
    \caption{\footnotesize Numerical solutions of the velocity using Scheme-C for FSI2 (Left: $x$-component; right: $y$-component; Top: $t$ =  18.5s; bottom: $t$ = 19.5 s).}
\label{fig:THC2}
\end{figure}

\begin{figure}[htbp]
    \centering
    \includegraphics[width=0.45\textwidth]{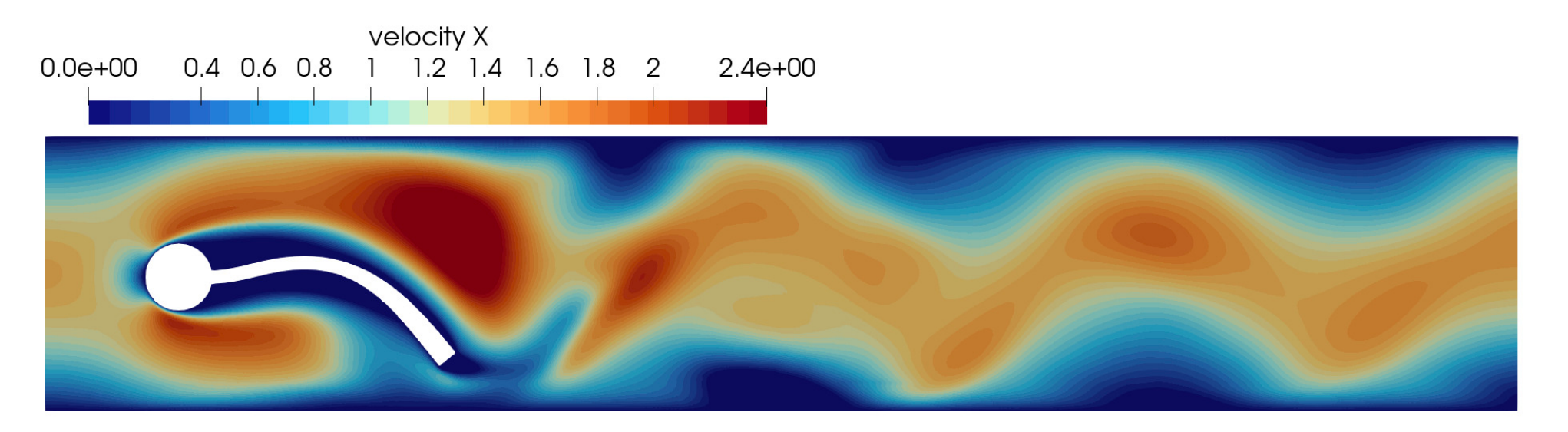}
    \includegraphics[width=0.45\textwidth]{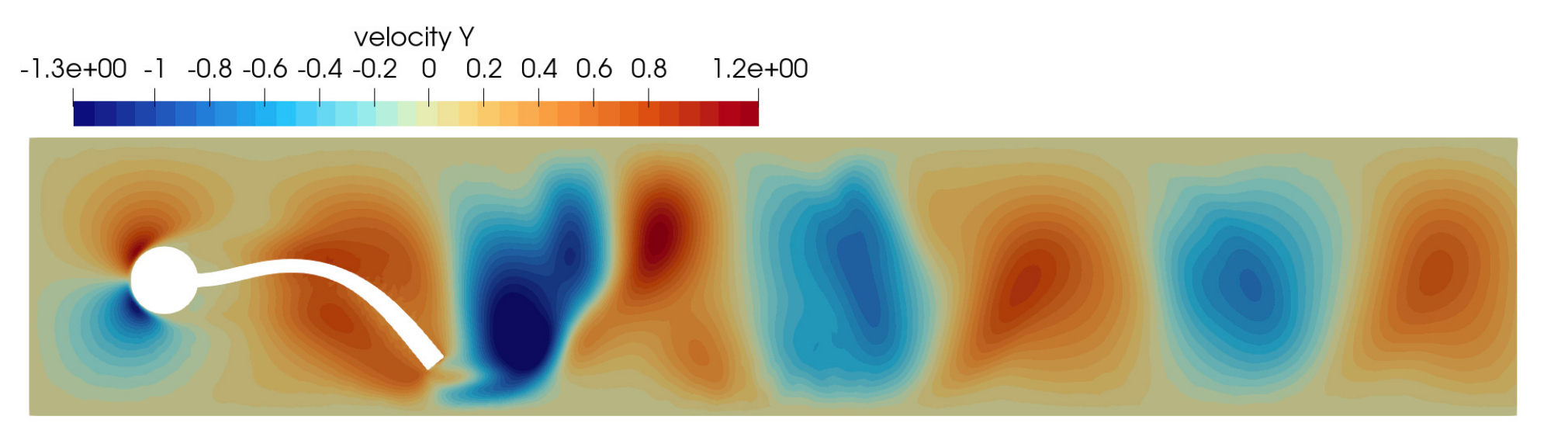}

    \includegraphics[width=0.45\textwidth]{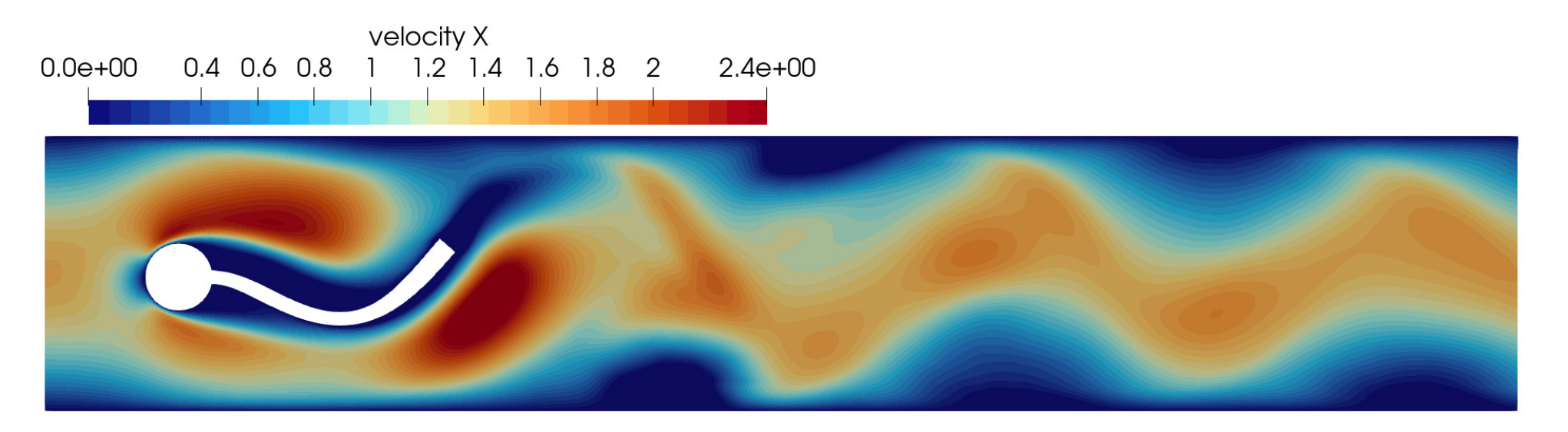}
    \includegraphics[width=0.45\textwidth]{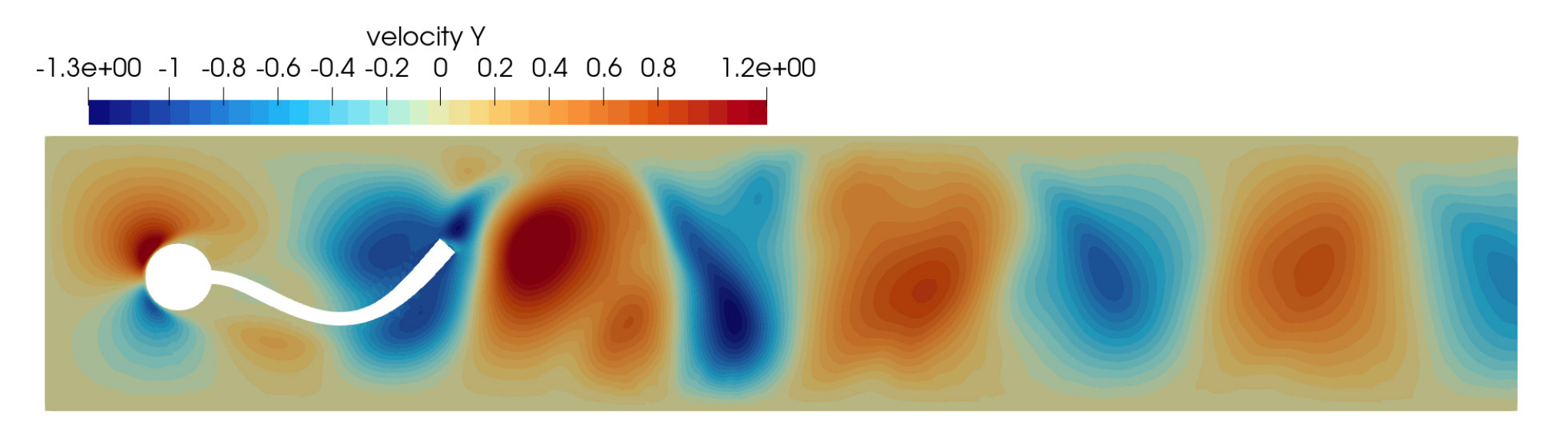}
    \caption{\footnotesize Numerical solutions of the velocity using Scheme-D for FSI2 (Left: $x$-component; Right: $y$-component; Top: $t$ =  18.5s; Bottom: $t$ = 19.5 s).}
\label{fig:THD2}
\end{figure}

\begin{figure}[htbp]
    \centering
    \includegraphics[width=0.45\textwidth]{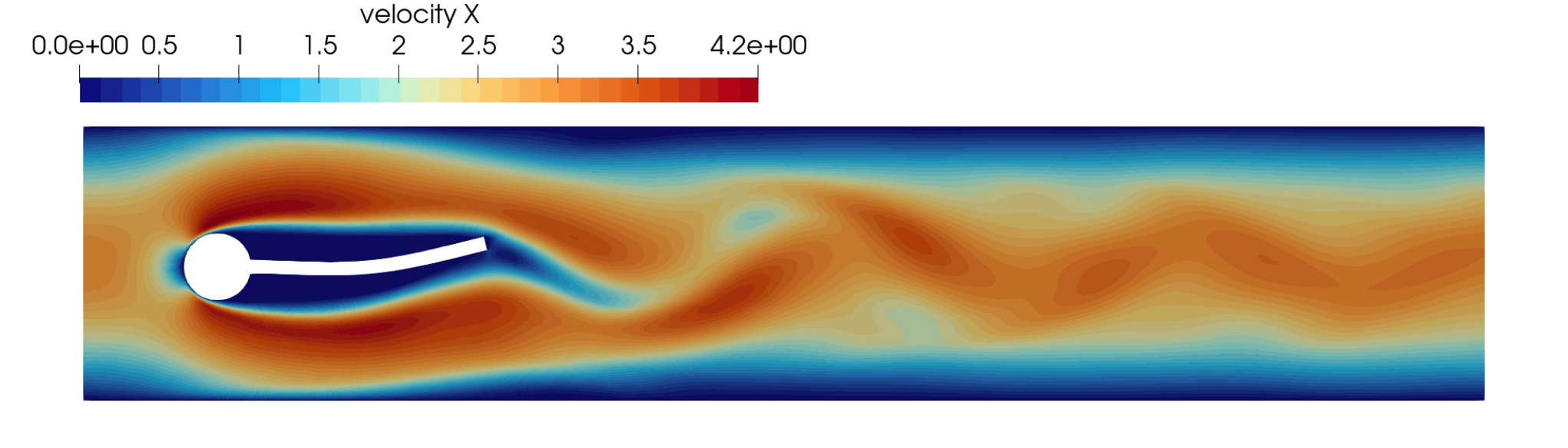}
    \includegraphics[width=0.45\textwidth]{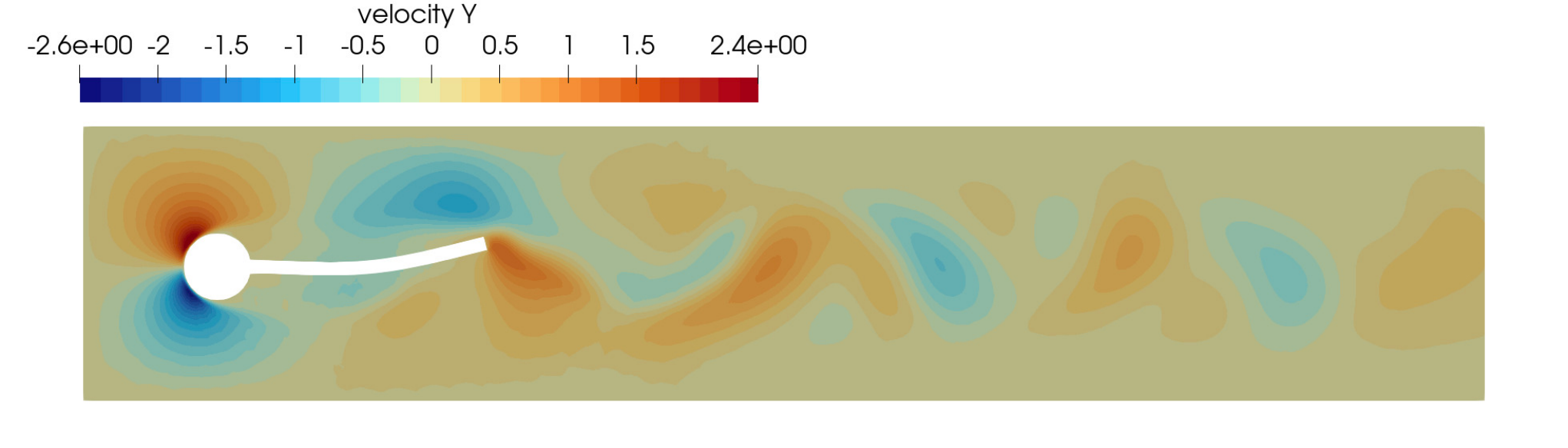}

    \includegraphics[width=0.45\textwidth]{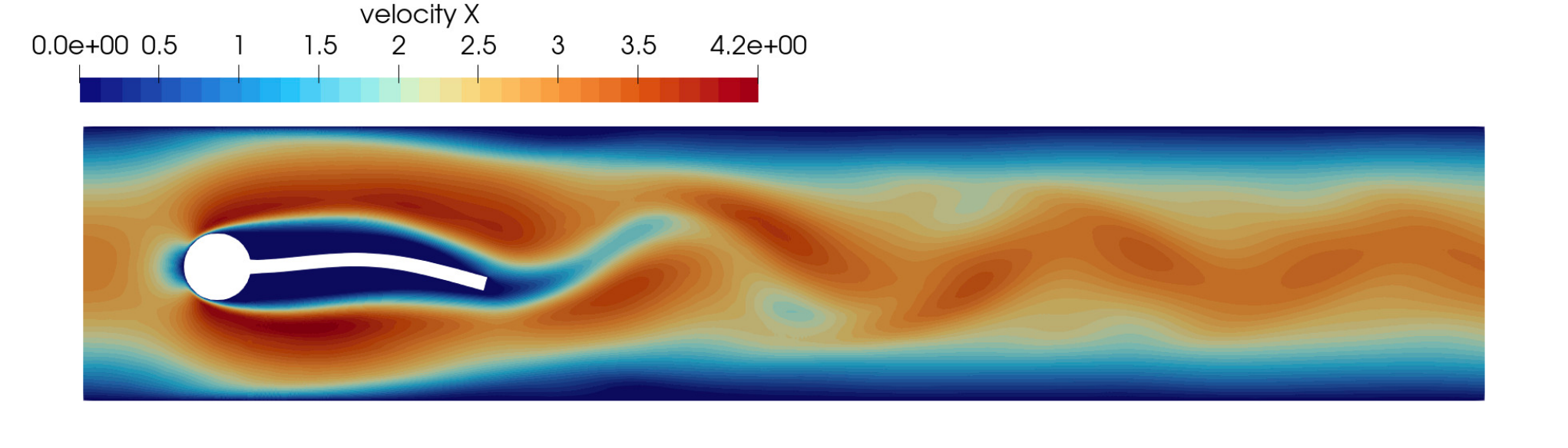}
    \includegraphics[width=0.45\textwidth]{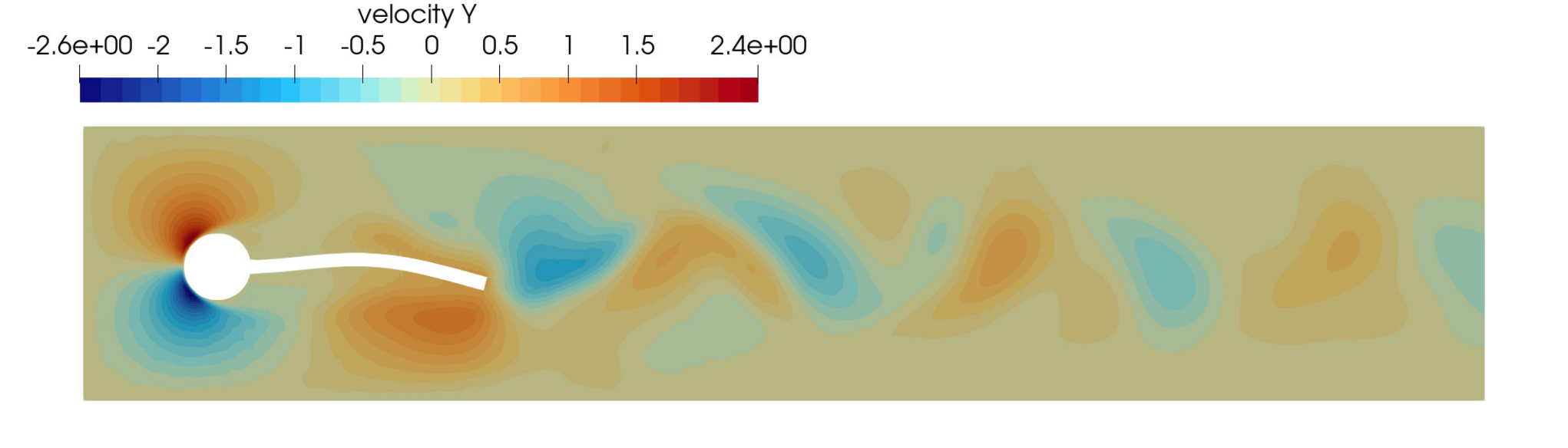}
   \caption{\footnotesize Numerical solutions of the velocity using Scheme-C for FSI3 (Left: $x$-component; Right: $y$-component; Top: $t$ =  7s; Bottom: $t$ = 7.5 s).}
\label{fig:THC3}
\end{figure}

\begin{figure}[htbp]
    \centering
    \includegraphics[width=0.45\textwidth]{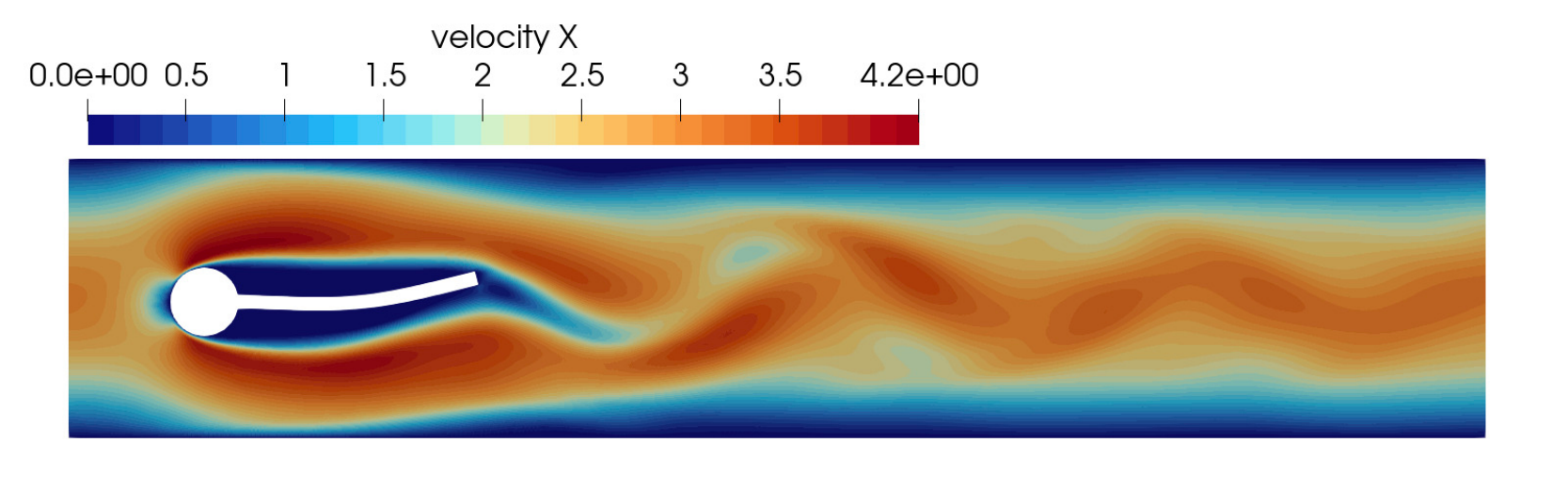}
    \includegraphics[width=0.45\textwidth]{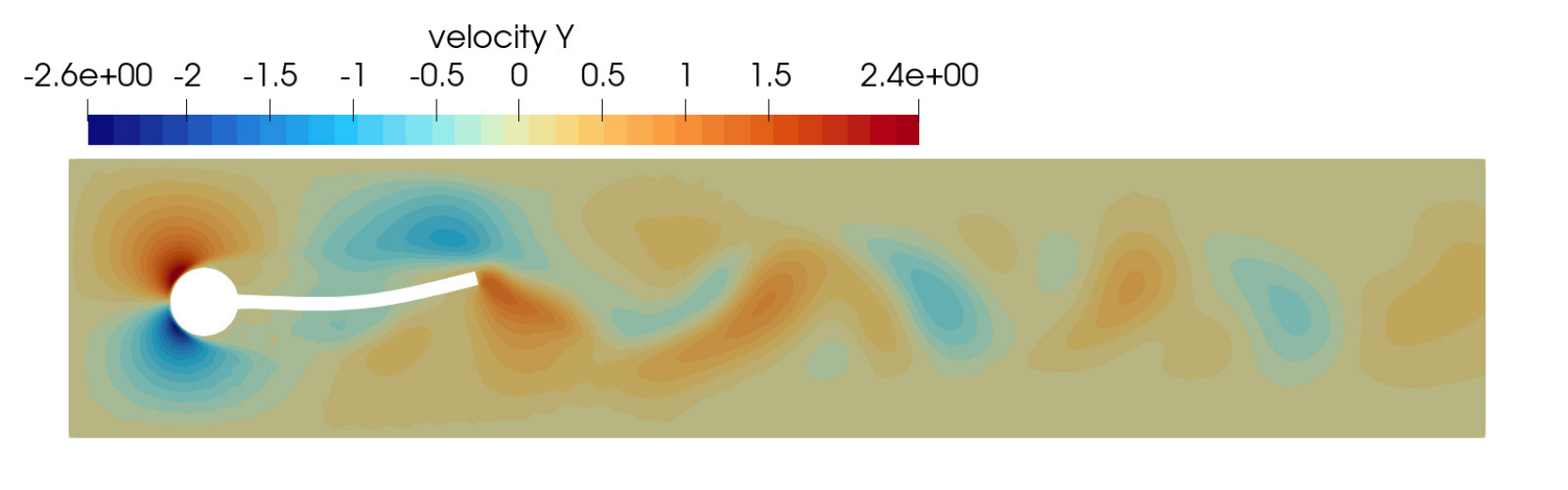}

    \includegraphics[width=0.45\textwidth]{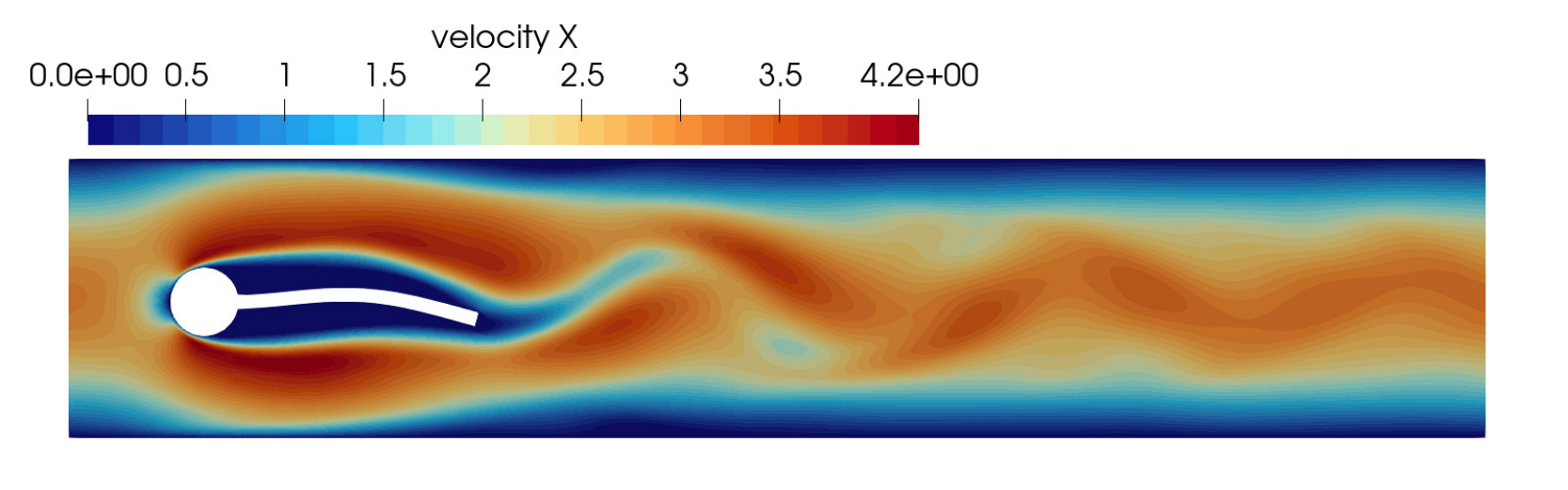}
    \includegraphics[width=0.45\textwidth]{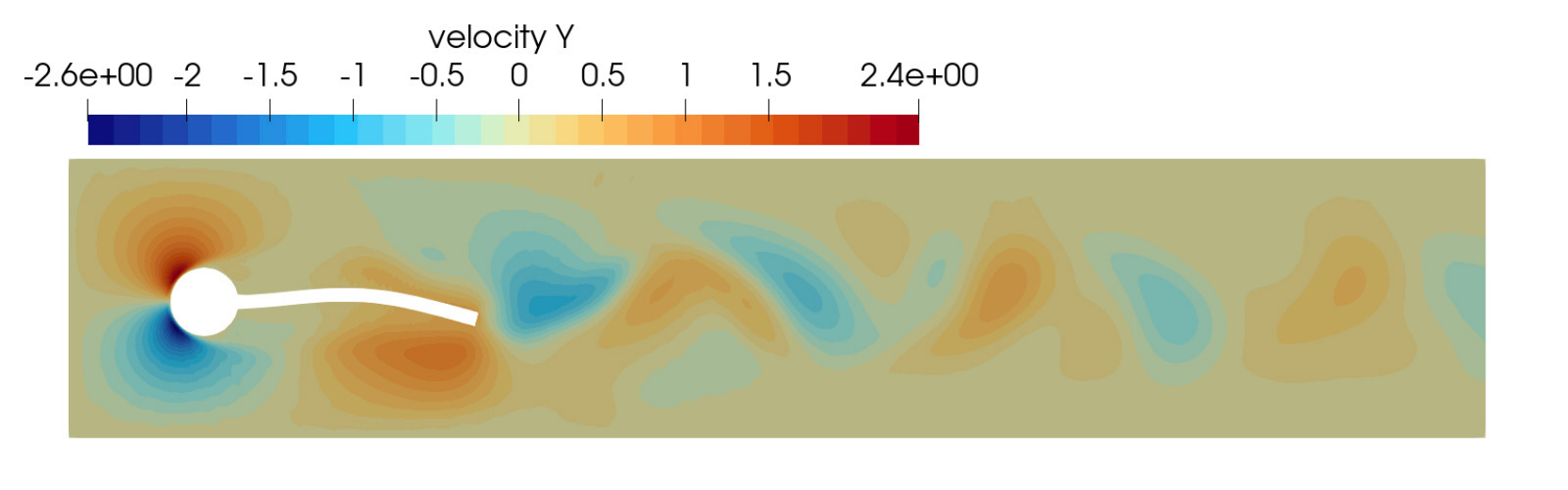}
    \caption{\footnotesize Numerical solutions of the velocity using Scheme-D for FSI3 (Left: $x$-component; Right: $y$-component; Top: $t$ =  7s; Bottom: $t$ = 7.5 s).}
\label{fig:THD3}
\end{figure}

\section{Conclusions}\label{Sec_con}

We have developed two monolithic numerical methods for FSI based on the Piola-type ALE mapping, where the backward Euler scheme is used for  the time discretization, and  the HDG framework   for the spatial discretization of both the fluid and structural equations.  The methods   lead to  globally divergence-free   fluid velocity approximations.  Energy stability results have been established for both the semi-discrete and fully discrete schemes. The provided numerical results have  confirmed the stability analysis and  demonstrated the performance of the proposed methods.


\end{document}